\documentclass[11pt]{article}
\usepackage[margin=1.1in]{geometry}
\usepackage{amsmath,amssymb,amsthm}

\theoremstyle{plain}
\newtheorem{theorem}{Theorem}[section]
\newtheorem{lemma}[theorem]{Lemma}
\newtheorem{proposition}[theorem]{Proposition}
\newtheorem{corollary}[theorem]{Corollary}
\newtheorem{conjecture}[theorem]{Conjecture}
\theoremstyle{definition}
\newtheorem{definition}[theorem]{Definition}
\newtheorem{example}[theorem]{Example}
\theoremstyle{remark}
\newtheorem{remark}[theorem]{Remark}

\newcommand{\HH}{\mathrm{HH}}
\newcommand{\HC}{\mathrm{HC}}
\newcommand{\HP}{\mathrm{HP}}
\newcommand{\Tor}{\operatorname{Tor}}
\newcommand{\gldim}{\operatorname{gl.dim}}
\newcommand{\pd}{\operatorname{pd}}
\newcommand{\rk}{\operatorname{rank}}
\newcommand{\tr}{\operatorname{tr}}
\newcommand{\Id}{\mathrm{Id}}
\newcommand{\rad}{\mathfrak{r}}
\newcommand{\La}{\Lambda}
\newcommand{\CC}{\mathcal{C}}
\newcommand{\Ext}{\operatorname{Ext}}
\newcommand{\HomOp}{\operatorname{Hom}}

\title{Protected corners and a trichotomy\\ for Han's conjecture}
\author{Marco Armenta}
\date{}
\begin{document}
\maketitle

\begin{abstract}
Han's conjecture predicts that a finite-dimensional algebra with eventually vanishing Hochschild homology has finite global dimension. In the tau-Hochschild framework of Cibils, Lanzilotta, Marcos and Solotar, it splits into persistence (Gap A) and survival (Gap B), and a Gap-A failure is already a counterexample. We prove a protected corner theorem bounding Ext at a surviving vertex, settling the Liu-Morin extension conjecture beyond the monomial and special biserial cases. We then establish a trichotomy for Gap-A failures on three strongly connected vertices: the all-infinite case is impossible, and the two-infinite case is completely classified as a mutual dumbbell. 

\end{abstract}

\section{Introduction}

In this paper, we prove that a counterexample to Han's conjecture arising from the failure of persistence, if it exists, has one of two completely described shapes on its minimal number of vertices. In other words, the main result of this work goes as follows: an elementary algebra of infinite global dimension which is not of infinite $+$ global dimension has, on three strongly connected vertices, exactly one or two simple modules of infinite projective dimension; the configuration in which all three are infinite does not exist, and the two-infinite configuration is determined up to a central socle ideal, see Theorem~\ref{thm:trichotomy}. The mechanism behind these statements is a protected corner theorem, which produces self-extensions of a simple module in every degree, see Theorem~\ref{thm:protectedcorner}.

Let $k$ be a field and $\La$ a finite dimensional $k$-algebra with Jacobson radical $\rad$ such that $E=\La/\rad$ is separable. Happel asked in \cite{Happel89} whether the vanishing of Hochschild cohomology $\HH^n(\La)$ in all high degrees forces $\gldim\La<\infty$; Buchweitz, Green, Madsen and Solberg \cite{BGMS} answered negatively with the quantum exterior algebras $\La_q=k\langle x,y\rangle/(x^2,\,yx+qxy,\,y^2)$, $q$ not a root of unity. Han \cite{Han06} then conjectured the homological statement:

\begin{conjecture}[Han]\label{conj:han}
If $\HH_n(\La)=0$ for all $n\gg0$, then $\gldim\La<\infty$.
\end{conjecture}

The conjecture is known for monomial algebras \cite{Han06}, commutative algebras \cite{AVP}, Koszul, cellular and graded local algebras \cite{BerghMadsen}, quantum complete intersections \cite{BerghErdmann}, and is preserved under several gluing operations \cite{CRS}; see the survey \cite{Cruz}.

Recently Cibils, Lanzilotta, Marcos and Solotar \cite{CLMS} introduced the $\tau$-Hochschild (co)homology $\HH^\tau_*(\La)$, computed from the \emph{minimal} projective bimodule resolution of $\La$: in the notation of Section~\ref{sec:conv}, $\HH^\tau_n(\La)$ is the space of \emph{cycles} of the Hochschild chain complex associated with the minimal resolution, while $\HH_n(\La)$ is cycles modulo boundaries \cite[Thm.~2.12, Rem.~2.18]{CLMS}. They call $\La$ of \emph{infinite $+$ global dimension} if there is a pair of vertices $(y,x)$ with $y\La x\neq 0$ and $\Tor^\La_*(k_x,{}_yk)$ nonzero in infinitely many degrees, and prove that this holds if and only if $\HH^\tau_*(\La)$ is infinite \cite[Thm.~5.5]{CLMS}. Their results split Conjecture~\ref{conj:han} into two independent statements:
\begin{itemize}
\item[\textbf{(Gap A)}] \emph{Persistence:} $\gldim\La=\infty$ implies $\La$ is of infinite $+$ global dimension.
\item[\textbf{(Gap B)}] \emph{Survival:} infinite $+$ global dimension implies $\HH_*(\La)$ is infinite.
\end{itemize}
A Gap-A failure is already a counterexample to Conjecture~\ref{conj:han} \cite[Rem.~5.7(1)]{CLMS}, and Gap A alone does not imply the conjecture \cite[Rem.~5.7(3)]{CLMS}; together, Gap A and Gap B are equivalent to it.

Throughout, $\La=kQ/I$ is elementary (a bound quiver algebra) unless stated otherwise, with $v=|Q_0|$ vertices; the general separable case is treated where it costs nothing. We write $S_x$ for the simple module at a vertex $x$ and $\Gamma_x=e_x\La e_x$ for the corner algebra at $x$. We now state the main results of this paper.

\begin{itemize}
\item[\textbf{(A)}] We prove that if $\Gamma_x\neq k$ and $y\La x=0$ for every external in-neighbour $y$ of $x$, that is, no path from $x$ back to any vertex pointing at $x$ survives in $\La$, then the minimal $\Gamma_x$-free resolution of $k$ embeds as a protected thread in the minimal $\La$-resolution of $S_x$, so that
\[
\dim_k\Ext^n_\La(S_x,S_x)\;\geq\;\dim_k\Ext^n_{\Gamma_x}(k,k)\;\geq\;1\qquad\text{for all }n\geq1 ,
\]
see Theorem~\ref{thm:protectedcorner}. In particular the extension conjecture holds at $x$, in a family of instances not covered by the known monomial \cite{GSZ} and special biserial \cite{LiuMorin} cases, and $\La$ is of infinite $+$ global dimension.
\item[\textbf{(B)}] We prove that a Gap-A failure with strongly connected quiver on three vertices, the minimal number by Theorem~\ref{thm:scc} and \cite[\S6]{CLMS}, has exactly one or two simple modules of infinite projective dimension, see Theorem~\ref{thm:trichotomy}. The all-infinite configuration is impossible: the Peirce components that it forces to vanish form precisely the protection hypothesis of (A). The two-infinite configuration is completely determined and we call it the \emph{mutual dumbbell}: two mutually $\La$-dead vertices joined through a hub $u$ whose simple module has finite projective and injective dimension, and, up to a central socle ideal, $\La$ is the fibre product $\La_x\times_k\La_w$ of two two-vertex algebras glued at $u$, see Theorem~\ref{thm:gluing}.
\item[\textbf{(C)}] We prove that any Gap-A failure, on any number of vertices and in every characteristic, satisfies $\HH_n(\La)=0$ for all $n\gg0$ and $\sum_i(-1)^i\dim\HH_i(\La)=v$, see Proposition~\ref{prop:hhvanish}.
\item[\textbf{(D)}] We prove that on the dumbbell the linking bimodules $x\La u$ and $w\La u$ are non-projective over the nontrivial corners and classically generate their singularity categories, see Corollary~\ref{cor:nonproj}; that the Cartan determinant is nonzero and divides five explicit products, see Proposition~\ref{prop:cartanD}; and we construct a Hattori--Stallings transfer through the hub which computes every through-cycle class in $\HH_0(\La)$ by a finite alternating trace sum, although the resolutions of $S_x$ and $S_w$ are infinite, see Lemma~\ref{lem:transfer}.
\end{itemize}

Let us now relate these results to the literature. The trace methods that we use go back to Hattori \cite{Hattori} and Stallings \cite{Stallings}, were brought to finite dimensional algebras by Lenzing \cite{Lenzing} and Igusa \cite{Igusa90}, and culminated in the proof of the strong no loop conjecture by Igusa, Liu and Paquette \cite{ILP}, whose theorem is also an ingredient of Theorem~\ref{thm:trichotomy}. The extension conjecture was posed by Liu and Morin \cite{LiuMorin} and again in \cite{ILP}. Han proposed a proof of it for elementary algebras via recollements and differential graded methods in the preprint \cite{HanExt}, which appears to have remained unpublished, and \cite{CLMS} treat the conjecture as open. The results of this paper are independent of \cite{HanExt}; where both apply, Theorem~\ref{thm:protectedcorner} gives the stronger conclusion of nonvanishing in \emph{every} degree with an explicit lower bound, by an elementary and self-contained argument, see Remark~\ref{rem:hanext}. On the gluing side, we note that the dumbbell is a null-square-type extension in the sense of Cibils, Redondo and Solotar \cite{CRS}, but Corollary~\ref{cor:nonproj} places it exactly outside the null-square \emph{projective} and bounded-extension classes for which Han's conjecture is known to propagate \cite{CRS,CLMSsplit,CLMSbounded,Cruz}. We also note that finite global dimension forbids $2$-truncated cycles \cite{BHM}, and, for monomial algebras, even $m$-truncated ones \cite{Han10} , while the crossing relations of the dumbbell kill \emph{all} through-hub composites. The mixed-necklace computation proposed in Remark~\ref{rem:dumbbellstatus} belongs to the Poincar\'e-series tradition for fibre products of local rings initiated by Dress and Kr\"amer \cite{DressKramer}; see also \cite{NassehSW}. For the Cartan determinant circle around Proposition~\ref{prop:cartanD} see Fuller's survey \cite{Fuller}; for singularity categories see \cite{Buchweitz,Orlov}.

These theorems rest on foundational results that we prove first.
\begin{itemize}
\item[\textbf{(I)}] We prove that $\La$ is of infinite $+$ global dimension if and only if there is a vertex $x$ such that $e_x\,\Omega^n(S_x)\neq 0$ for infinitely many $n$, see Theorem~\ref{thm:criterion}. The engine is the identity $\dim B_n(\La)=\tr S_n+\tr S_{n+1}$ relating the Hochschild chain spaces of the minimal bimodule resolution to the syzygy dimension matrices, see Corollary~\ref{cor:bn}.
\item[\textbf{(II)}] We reduce Gap A to algebras with strongly connected quiver: infinite $+$ global dimension lifts along triangular corner decompositions, and infinite global dimension is detected on the corner algebras of the strongly connected components of $Q$, see Theorem~\ref{thm:scc}. A minimal Gap-A failure therefore has strongly connected quiver. Unconditional new cases follow: Gap A holds whenever every cycle of $Q$ is a loop, in contrast with \cite[Prop.~6.11]{CLMS} no extension conjecture is needed, and whenever every strongly connected component satisfies a per-component criterion, see Corollary~\ref{cor:omnibus}.
\item[\textbf{(III)}] We prove the $\chi$-law: Define on cyclic homology, $\HP_i(\La):=\varprojlim\big(\HC_{i+2m}(\La),S\big)$, for $i=0,1$ and $S$ the morphism in Connes' periodicity exact sequence. If $\HH_n(\La)=0$ for $n>N$ then, over any field,
\[
\sum_{i=0}^{N}(-1)^i\dim_k\HH_i(\La)\;=\;\dim_k\HP_0(\La)-\dim_k\HP_1(\La),
\]
and in characteristic zero the right-hand side equals $\dim_k E/[E,E]$, hence $v$ for elementary algebras, see Theorem~\ref{thm:chi}. Any counterexample to Conjecture~\ref{conj:han} in characteristic zero therefore satisfies the exact budget $\dim\HH_0-v=\sum_{i\geq1}(-1)^{i-1}\dim\HH_i$, see Corollary~\ref{cor:budget}.
\item[\textbf{(IV)}] We prove that every invariant factoring through the dimension data of the minimal bimodule resolution, such as the Cartan matrix, the Ext multiplicities, the syzygy dimension vectors, truncated Euler characteristics and Coxeter traces, is consistent with a counterexample, see Corollary~\ref{cor:nogo}. Gap B is a statement about ranks of differentials, not dimensions of chain spaces.
\end{itemize}

Next, we provide a summary of each Section. In Section~\ref{sec:conv}, we fix notation and prove the $K_0$ identity relating the syzygy dimension matrices to the Ext and Cartan matrices. In Section~\ref{sec:criterion}, we prove the syzygy-return criterion (I). In Section~\ref{sec:gapA}, we prove the reduction (II) of Gap A to strongly connected quivers. In Section~\ref{sec:chi}, we prove the $\chi$-law (III). In Section~\ref{sec:gapB}, we prove the no-go theorem (IV) for Gap B. In Section~\ref{sec:program}, we assemble these results into a stratified program for the search of counterexamples, and we outline the possible directions. In Section~\ref{sec:prob1}, we determine the anatomy of a Gap-A failure: the one-way law, the corner-projectivity criterion and the tiling mechanism. In Section~\ref{sec:threevertex}, we prove the concentration theorem, the tail factorization, the protected corner theorem (A) and the trichotomy (B). In Section~\ref{sec:dumbbell}, we study the dumbbell by means of Hattori--Stallings traces and we prove (C) and (D).

\section{Conventions and the $K_0$ identity}\label{sec:conv}

$k$ is a field; $\La$ is finite dimensional with $E=\La/\rad$ separable. For statements involving quivers, $\La=kQ/I$ with $I$ admissible, $E=kQ_0$, $v=|Q_0|$; $S_x={}_xk$ is the simple left module at the vertex $x$, $P_x=\La e_x$ its projective cover, and $\Omega$ the syzygy operator with respect to minimal projective covers. All matrices below are $v\times v$ with nonnegative integer entries, rows and columns indexed by $Q_0$.

\begin{definition}\label{def:matrices}
\begin{itemize}
    \item (i) The \emph{Cartan matrix}: $C_{xy}:=\dim_k y\La x$; note $C_{xx}\geq 1$.
    \item (ii) The \emph{Ext matrices}: $C^{(n)}_{xy}:=\dim_k\Ext^n_\La(S_x,S_y)$, so that $C^{(0)}=\Id$ and $C^{(1)}$ is the arrow-count matrix. By \cite[Prop.~3.8]{CLMS}, $C^{(n)}_{xy}=\dim_k\Tor_n^\La(k_y,{}_xk)$.
    \item (iii) The \emph{syzygy matrices}: $(S_n)_{xz}:=\dim_k e_z\,\Omega^n(S_x)$, the $z$-component of the dimension vector of $\Omega^n(S_x)$; thus $S_0=\Id$ and $S_1=C-\Id$.
    \item (iv) $E_m:=\sum_{i=0}^m(-1)^iC^{(i)}$ and $\chi_m:=\sum_{i=0}^m(-1)^i\dim_k\HH_i(\La)$.
\end{itemize}
\end{definition}

By Happel's description of the minimal projective bimodule resolution $(P_\bullet,d_\bullet)$ of $\La$ \cite{Happel89}, made explicit in \cite[Thm.~3.2]{CLMS}, one has $P_n\cong\La\otimes_E\Tor_n^\La(E,E)\otimes_E\La$, and the chain spaces of $\La\otimes_{\La^e}P_\bullet$, whose homology is $\HH_*(\La)$, are
\begin{equation}\label{eq:Bn}
B_n(\La)=\bigoplus_{y,x\in Q_0} y\La x\otimes \Tor_n^\La(k_x,{}_yk),
\qquad
b_n:=\dim_k B_n(\La)=\tr\!\big(C^{(n)}C\big),
\end{equation}
using $\dim\Tor_n(k_x,{}_yk)=C^{(n)}_{yx}$ and $\dim y\La x=C_{xy}$; see \cite[(4.7)]{CLMS}. Dually, the cochain spaces have dimensions $a_n:=\dim\HomOp_{E^e}(\Tor_n^\La(E,E),\La)=\tr(C^{(n)}C^{\mathsf T})$ \cite[(4.6)]{CLMS}.

\begin{lemma}[$K_0$ identity]\label{lem:K0}
For all $n\geq0$,
\[
S_n+S_{n+1}=C^{(n)}\,C,
\qquad\text{hence}\qquad
E_m\,C=\Id+(-1)^m S_{m+1}\quad\text{for all }m\geq0 .
\]
\end{lemma}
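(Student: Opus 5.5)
We use the short exact sequences $0\to\Omega^{n+1}(S_x)\to P(\Omega^n S_x)\to\Omega^n(S_x)\to0$ given by minimal projective covers, and compare dimension vectors. Since $E$ is separable and the cover is minimal, the top of $\Omega^n(S_x)$ is $\Omega^n(S_x)/\rad\,\Omega^n(S_x)$. Its multiplicity at the vertex $y$ is
\[
\dim_k\HomOp_\La(\Omega^n S_x,S_y)=\dim_k\Ext^n_\La(S_x,S_y)=C^{(n)}_{xy}.
\]
For $n=0$ this is trivial. For $n\ge1$ it is the standard fact that, for a minimal resolution, the differentials of $\HomOp(P_\bullet,S_y)$ vanish. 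Over a general separable $E$ one must also account for the division-algebra dimensions; for elementary algebras they are all $1$. Hence the projective cover of $\Omega^n(S_x)$ is $\bigoplus_y P_y^{\,C^{(n)}_{xy}}$.

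Next we compute dimension vectors. The $z$-component of $P_y=\La e_y$ is $\dim e_z\La e_y=C_{yz}$. So the $z$-component of the cover of $\Omega^n(S_x)$ is $\sum_y C^{(n)}_{xy}C_{yz}=(C^{(n)}C)_{xz}$. Applying $e_z$ to the short exact sequence is exact, which gives
\[
(S_{n+1})_{xz}+(S_n)_{xz}=(C^{(n)}C)_{xz},
\]
that is, $S_n+S_{n+1}=C^{(n)}C$.

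The second identity follows by induction on $m$ (a telescoping sum). For $m=0$, $E_0C=C=\Id+S_1$, since $S_1=C-\Id$ by Definition~\ref{def:matrices}(iii), or equivalently by the case $n=0$ of the first identity with $S_0=\Id$. For the inductive step,
\[
E_{m+1}C=E_mC+(-1)^{m+1}C^{(m+1)}C=\Id+(-1)^mS_{m+1}+(-1)^{m+1}\big(S_{m+1}+S_{m+2}\big)=\Id+(-1)^{m+1}S_{m+2}.
\]

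The only step needing care is the identification of the top multiplicities of $\Omega^n(S_x)$ with $\dim\Ext^n(S_x,S_y)$. This rests on the vanishing of the induced differentials in $\HomOp(P_\bullet,S_y)$: the image of each minimal differential lies in the radical of the next term, so every homomorphism to the simple module $S_y$ kills it. It also needs the index conventions (rows $x$, columns $y$, and $C_{xy}=\dim y\La x=\dim e_y\La e_x$) to match the order of the matrix product. I would check these conventions explicitly against $S_1=C-\Id$. The rest is bookkeeping.
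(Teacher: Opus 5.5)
Your proof is correct and is essentially the paper's argument. The paper likewise identifies the top of $\Omega^n(S_x)$ as $\bigoplus_y S_y^{C^{(n)}_{xy}}$ via $\dim\HomOp(\Omega^nS_x,S_y)=\dim\Ext^n(S_x,S_y)$, reads the projective-cover sequence at each Peirce component using $\dim e_zP_y=C_{yz}$, and gets the second identity by telescoping from $S_0=\Id$, which your explicit induction reproduces.
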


\begin{proof}
Minimality gives $\operatorname{top}\Omega^n(S_x)\cong\bigoplus_y S_y^{\,C^{(n)}_{xy}}$, since $\dim\HomOp_\La(\Omega^nS_x,S_y)=\dim\Ext^n_\La(S_x,S_y)$. Reading the exact sequence
$0\to\Omega^{n+1}(S_x)\to\bigoplus_y P_y^{\,C^{(n)}_{xy}}\to\Omega^n(S_x)\to0$
at the $z$-th Peirce component, with $\dim e_zP_y=\dim z\La y=C_{yz}$, yields
$(S_n)_{xz}+(S_{n+1})_{xz}=\sum_y C^{(n)}_{xy}C_{yz}$. The second identity follows by telescoping, using $S_0=\Id$.
\end{proof}

\begin{corollary}\label{cor:bn}
$b_n=\tr S_n+\tr S_{n+1}$ for all $n\geq0$. If $\gldim\La=d<\infty$ then $S_{d+1}=0$, so $\sum_{n=0}^d(-1)^nC^{(n)}=C^{-1}$, in particular $C\in\mathrm{GL}_v(\mathbb{Z})$, and
\[
\sum_n(-1)^n b_n=\tr(\Id)=v,
\qquad
\sum_n(-1)^n a_n=\tr\!\big(C^{-1}C^{\mathsf T}\big).
\]
Since the complexes are finite, the left identity together with the theorem of Han and Keller that $\HH_n(\La)=0$ for $n\geq1$ when $\gldim\La<\infty$ (see \cite[\S3]{CLMS}) gives $\dim\HH_0(\La)=v$; the right identity is Happel's trace formula $\sum(-1)^n\dim\HH^n(\La)=\tr(C^{-1}C^{\mathsf T})$ \cite{Happel97}.
\end{corollary}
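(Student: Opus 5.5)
The plan is to derive each claim of Corollary~\ref{cor:bn} from Lemma~\ref{lem:K0} together with the trace expression \eqref{eq:Bn}. For the first identity, take the trace of $S_n+S_{n+1}=C^{(n)}C$. Since \eqref{eq:Bn} gives $b_n=\tr(C^{(n)}C)$, this yields $b_n=\tr S_n+\tr S_{n+1}$ directly. The only point to check is the indexing convention: $\tr(C^{(n)}C)=\sum_{x,y}C^{(n)}_{xy}C_{yx}$, and this agrees with $\sum_{x,y}\dim y\La x\cdot\dim\Tor_n(k_x,{}_yk)$ by Definition~\ref{def:matrices}(ii) and the symmetry of the trace pairing.

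Now suppose $\gldim\La=d$. Every simple module then has projective dimension at most $d$, so $\Omega^{d+1}(S_x)=0$ for all $x$, and hence $S_{d+1}=0$. Taking $m=d$ in the second identity of Lemma~\ref{lem:K0} gives $E_dC=\Id$. Both $E_d$ and $C$ are integer matrices, so $C$ is invertible over $\mathbb{Z}$ with $C^{-1}=E_d=\sum_{n=0}^d(-1)^nC^{(n)}$. Also $C^{(n)}=0$ for $n>d$, so the sum may be written over all $n$.

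For the alternating sum of the $b_n$, sum the first identity with signs. We get $\sum_{n=0}^{d}(-1)^n(\tr S_n+\tr S_{n+1})$, which telescopes to $\tr S_0+(-1)^d\tr S_{d+1}=\tr\Id=v$. Alternatively, apply linearity of the trace to $\sum_n(-1)^nC^{(n)}C=E_dC=\Id$. Here $b_n=0$ for $n>d$, since $C^{(n)}=0$ there, so all sums are finite. For the cochain side, $\sum_n(-1)^na_n=\tr\big(\sum_n(-1)^nC^{(n)}C^{\mathsf T}\big)=\tr(C^{-1}C^{\mathsf T})$ by the formula $a_n=\tr(C^{(n)}C^{\mathsf T})$ quoted above.

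The remaining statements are Euler characteristic arguments. The complexes $\La\otimes_{\La^e}P_\bullet$ and $\HomOp_{\La^e}(P_\bullet,\La)$ are finite, since $P_n=0$ for $n>d$, and they consist of finite-dimensional spaces. Hence the alternating sum of the dimensions of the chain spaces equals the alternating sum of the dimensions of the homology. Combining this with the Han--Keller vanishing $\HH_n(\La)=0$ for $n\geq1$ gives $\dim\HH_0(\La)=v$. The same argument on the cochain side recovers Happel's formula; no vanishing is needed there, only finiteness of the complex.

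I expect no real obstacle. The one step needing care is the identification of $B_n$ and $a_n$ with the stated traces, in particular the transposes, which I would verify against the Peirce decomposition of $P_n$.
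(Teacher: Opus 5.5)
Your proposal is correct and follows essentially the same route as the paper: take the trace of the $K_0$ identity to get $b_n=\tr S_n+\tr S_{n+1}$, use $S_{d+1}=0$ to obtain $E_dC=\Id$, and read off both alternating sums from $\tr(E_dC)$ and $\tr(E_dC^{\mathsf T})$, using that a finite complex and its homology have the same Euler characteristic. Your telescoping alternative and your index check on $\tr(C^{(n)}C)$ are harmless additions.
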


\begin{proof}
Take the trace of the first identity of Lemma~\ref{lem:K0} for the first claim; the rest is immediate from $\tr(E_dC)=\tr\Id$ and $\tr(E_dC^{\mathsf T})=\tr(C^{-1}C^{\mathsf T})$, since the Euler characteristic of a finite complex equals that of its homology.
\end{proof}

\section{The syzygy-return criterion}\label{sec:criterion}

\begin{theorem}\label{thm:criterion}
For an elementary algebra $\La=kQ/I$ the following are equivalent:
\begin{enumerate}
\item $\La$ is of infinite $+$ global dimension in the sense of \cite[Def.~5.4]{CLMS};
\item $\HH^\tau_*(\La)$ is infinite;
\item $b_n\neq0$ for infinitely many $n$;
\item $\tr S_n\neq0$ for infinitely many $n$;
\item there is a vertex $x\in Q_0$ with $e_x\,\Omega^n(S_x)\neq0$ for infinitely many $n$.
\end{enumerate}
\end{theorem}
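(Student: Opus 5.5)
The plan is to prove the cycle (1)$\Leftrightarrow$(2) by citation, then (2)$\Leftrightarrow$(3), (3)$\Leftrightarrow$(4) and (4)$\Leftrightarrow$(5).

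\emph{Step 1: (1)$\Leftrightarrow$(2).} This is exactly \cite[Thm.~5.5]{CLMS}, so nothing is to be done.

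\emph{Step 2: (2)$\Leftrightarrow$(3).} Recall that $\HH^\tau_n(\La)$ is the space of cycles $Z_n\subseteq B_n(\La)$. If $b_n=0$ then $Z_n=0$, so (2)$\Rightarrow$(3). Conversely, suppose $b_n\neq0$ for infinitely many $n$. Then the elementary inequality $b_n\le\dim Z_n+\dim Z_{n-1}$ shows that $Z_n$ or $Z_{n-1}$ is nonzero for each such $n$; this inequality holds because $d_n\colon B_n\to B_{n-1}$ has image inside $Z_{n-1}$ and kernel $Z_n$. Hence infinitely many $Z_m$ are nonzero, and $\HH^\tau_*$ is infinite. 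If the paper's notion of ``$\HH^\tau_*$ infinite'' means infinite total dimension rather than infinitely many nonzero degrees, the two readings agree here, since each $B_n$ is finite dimensional.

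\emph{Step 3: (3)$\Leftrightarrow$(4).} This is immediate from Corollary~\ref{cor:bn}. Since $b_n=\tr S_n+\tr S_{n+1}$ is a sum of nonnegative integers, $b_n\neq0$ holds iff $\tr S_n\neq0$ or $\tr S_{n+1}\neq0$. So the two sets of indices are infinite simultaneously.

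\emph{Step 4: (4)$\Leftrightarrow$(5).} By Definition~\ref{def:matrices}, $\tr S_n=\sum_x\dim e_x\Omega^n(S_x)$. If $\tr S_n\neq0$ for infinitely many $n$, then since $Q_0$ is finite the pigeonhole principle gives a single $x$ with $e_x\Omega^n(S_x)\ne0$ for infinitely many $n$. The converse is trivial.

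\emph{Main obstacle.} The only genuinely delicate point is Step~2, namely making sure that the identification of $\HH^\tau_n$ with cycles \cite[Thm.~2.12]{CLMS} is used with the correct convention (chains versus cycles, and which grading of the differential). The rest is bookkeeping with nonnegative integer matrices. As a fallback, one can avoid Step~2 entirely and instead prove (1)$\Leftrightarrow$(3) directly. By \eqref{eq:Bn}, $b_n\neq0$ iff there is a pair $(y,x)$ with $y\La x\neq0$ and $\Tor_n(k_x,{}_yk)\neq0$. Since $Q_0\times Q_0$ is finite, pigeonholing over these pairs gives exactly the defining condition of infinite $+$ global dimension. In fact this direct route is cleaner, and I would present it as the main argument, with (2) attached via \cite[Thm.~5.5]{CLMS}.
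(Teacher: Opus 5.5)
Your proposal is correct. Steps 1, 3 and 4 coincide with the paper's argument.

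The difference is in Step 2. The paper gets (2)$\Leftrightarrow$(3) by citing \cite[Thm.~5.3]{CLMS}: $\HH^\tau_i=0$ for all $i\geq N$ iff $B_i=0$ for all $i>N$. You reprove exactly this from the identification of $\HH^\tau_n$ with the cycles of $B_\bullet(\La)$, via the rank--nullity bounds $\dim\HH^\tau_n\le b_n\le\dim\HH^\tau_n+\dim\HH^\tau_{n-1}$. This is elementary, self-contained, and correct, including at $n=0$.

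Your fallback is also valid. There you read (1)$\Leftrightarrow$(3) directly off \eqref{eq:Bn}, using that $b_n=\tr(C^{(n)}C)$ is a sum of nonnegative terms $\dim(y\La x)\cdot\dim\Tor_n(k_x,{}_yk)$, and pigeonhole over the finitely many pairs $(y,x)$. You propose to attach (2) through \cite[Thm.~5.5]{CLMS}, but your own Step 2 already attaches it. Combined, the two make the whole theorem depend only on \eqref{eq:Bn}, the cycle description of $\HH^\tau$, and Corollary~\ref{cor:bn}, with no appeal to Theorems 5.3 or 5.5 of \cite{CLMS}.

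The paper's version is shorter but hands both the cycles-versus-chains comparison and the unpacking of Def.~5.4 to \cite{CLMS}. Yours makes explicit that (1)$\Leftrightarrow$(3) is just positivity of the summands of $b_n$.
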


\begin{proof}
$(1)\Leftrightarrow(2)$ is \cite[Thm.~5.5]{CLMS}. $(2)\Leftrightarrow(3)$: by \cite[Thm.~5.3]{CLMS}, $\HH^\tau_i(\La)=0$ for all $i\geq N$ if and only if $B_i(\La)=0$ for all $i>N$; hence $\HH^\tau_*$ is infinite iff $b_n\neq0$ infinitely often. $(3)\Leftrightarrow(4)$: by Corollary~\ref{cor:bn}, $b_n=\tr S_n+\tr S_{n+1}$ with nonnegative summands, so $b_n\neq 0$ infinitely often iff $\tr S_n\neq0$ infinitely often. $(4)\Leftrightarrow(5)$: $\tr S_n=\sum_x(S_n)_{xx}$ has finitely many summands.
\end{proof}

\begin{remark}\label{rem:extconj}
Since $\operatorname{top}\Omega^n(S_x)$ is a quotient of $\Omega^n(S_x)$, one has $(S_n)_{xx}\geq C^{(n)}_{xx}=\dim\Ext^n(S_x,S_x)$. Condition (5), \emph{dimension return} of syzygies, is therefore implied by, and strictly weaker than, the conclusion of the extension conjecture (\emph{top return}: $\Ext^n(S_x,S_x)\neq0$ infinitely often at loop vertices; see \cite[\S6.4]{CLMS}). This weakening is what makes the unconditional results of Section~\ref{sec:gapA} possible.
\end{remark}

\begin{remark}
Condition (5) involves only dimension vectors of syzygies of simple modules, data that behave functorially under the idempotent-cutting operations of the next section. This robustness is the design principle of the reduction.
\end{remark}

\section{Reduction to strongly connected quivers}\label{sec:gapA}

\subsection{Corner cutting}

Let $1=e+f$ be a sum of orthogonal idempotents of $\La$ (sums of vertex idempotents in the elementary case) with
\begin{equation}\label{eq:triangular}
f\La e=0 .
\end{equation}
Set $A:=e\La e$, $B:=f\La f$, $M:=e\La f$; then $\La\cong\begin{pmatrix}A&M\\0&B\end{pmatrix}$ is a triangular matrix algebra. In the elementary case \eqref{eq:triangular} holds iff there is no path in $Q$ from an $e$-vertex to an $f$-vertex, i.e.\ the $f$-vertex set is closed under predecessors.

\begin{lemma}[Corner cutting]\label{lem:cut}
Assume \eqref{eq:triangular} with $\La$ elementary; write $a,a'$ for $e$-vertices and $b,b'$ for $f$-vertices.
\begin{enumerate}
\item For every $e$-vertex $a$: $P_\La(a)=\La e_a=Ae_a$, and the entire minimal $\La$-resolution of $S_a$ consists of $A$-modules and coincides with the minimal $A$-resolution. In particular $\pd_\La S_a=\pd_A S_a$, and $(S^\La_n)_{az}=(S^A_n)_{az}$ for every $e$-vertex $z$, while $(S^\La_n)_{az}=0$ for every $f$-vertex $z$.
\item For every $f$-vertex $b$: multiplication by $f$ is exact, $f\,P_\La(a)=0$, and $f\,P_\La(b')=P_B(b')$. Applying $f\cdot(-)$ to the minimal $\La$-resolution of $S_b$ therefore yields a (possibly non-minimal) $B$-projective resolution of $S_b$; consequently
\[
f\,\Omega^n_\La(S_b)\;\cong\;\Omega^n_B(S_b)\oplus L,
\qquad
(S^\La_n)_{bb'}\;\geq\;(S^B_n)_{bb'} ,
\]
for a projective $B$-module $L$, and $\pd_\La S_b\geq\pd_B S_b$.
\end{enumerate}
\end{lemma}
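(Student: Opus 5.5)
The plan is to work directly with Peirce components. The key observation is that $f\La e=0$ means every left $\La$-module $N$ satisfies $fN\supseteq f\La eN=0$ whenever $N=\La eN$. Consequently the class of left modules annihilated by $f$ (equivalently, $\La$-modules with $N=eN$) is closed under submodules, quotients and extensions. It can be identified with $A$-modules via $\La\to\La/\La f\La$: since $e\La f\La e\subseteq e\La f\cdot f\La e=0$, we get $\La/\La f\La\cong A$ acting on $eN$.

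\textbf{Part 1.} I would argue in four steps.
\begin{enumerate}
\item Compute $\La e_a=e\La e_a+f\La e_a=e\La e_a=Ae_a$. This is an $A$-module killed by $f$, and it is the projective cover of $S_a$ over both algebras.
\item Show that any $\La$-module $N$ with $fN=0$ has a $\La$-projective cover $\bigoplus P_\La(a')$ indexed only by $e$-vertices. The reason is that the top of $N$ lives at $e$-vertices, because $e_bN=0$.
\item These projectives equal $Ae_{a'}$, and the kernel is again killed by $f$. By induction, the minimal $\La$-resolution of $S_a$ consists of modules killed by $f$ with $A$-projective terms $Ae_{a'}$.
\item Minimality over $\La$ and over $A$ coincide, since radicals agree on such modules: $\rad_\La N=\rad_A N$ because $e\rad e=\rad_A$ and $f$ acts as zero. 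Uniqueness of minimal resolutions then gives equality with the $A$-resolution. The claims about $\pd$ and $(S_n)_{az}$ follow, with the $f$-components vanishing because $f\Omega^n=0$.
\end{enumerate}

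\textbf{Part 2.} Exactness of $N\mapsto fN$ is clear, since it is a direct summand functor $fN=f\La\otimes_\La N$ with $f\La$ projective. I would then check $fP_\La(a)=f\La e_a=0$ and $fP_\La(b')=f\La e_{b'}=f\La f e_{b'}=Be_{b'}$, using $e_{b'}=fe_{b'}$. Applying $f$ to the minimal resolution $\cdots\to P_1\to P_0\to S_b\to0$ gives an exact complex of $B$-projectives resolving $fS_b=S_b$. A possibly non-minimal projective resolution is the direct sum of the minimal one and split-exact (contractible) complexes of projectives. Hence the $n$-th kernel is $\Omega^n_B(S_b)\oplus L$ with $L$ projective, by Schanuel's lemma or the standard decomposition. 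Exactness of $f\cdot$ identifies that kernel with $f\Omega^n_\La(S_b)$. Taking $e_{b'}$-components yields the inequality, and $\pd_B S_b\le\pd_\La S_b$ follows because a finite $\La$-resolution restricts to a finite $B$-resolution.

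\textbf{Main obstacle.} I expect the delicate step to be the minimality bookkeeping in Part 1. There one must verify that $\rad_\La N=\rad_A N$ for $f$-annihilated modules, which uses $\rad\La=e\rad e\oplus e\La f\oplus f\rad f$ in the triangular form. One must also confirm that a projective cover computed over $\La$ is essential over $A$. Part 2 is formal once exactness and the computation of $fP$ are in place.
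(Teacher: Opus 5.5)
Your proposal is correct and follows the same route as the paper. In (1), $\rad\La$ acts on $f$-annihilated modules through $\rad_A$, so tops, projective covers and syzygies are computed in $A$. In (2), the exact functor $f\cdot(-)$ turns the minimal $\La$-resolution into a $B$-projective resolution, which splits as the minimal one plus a contractible summand.

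The only slip is the opening inclusion $fN\supseteq f\La eN$, which should read $fN=f\La eN$ for $N=\La eN$. Your derivation $f\La e_{b'}=f\La fe_{b'}=Be_{b'}$ is, if anything, a cleaner justification than the paper's path argument.
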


\begin{proof}
(1) $f\La e_a\subseteq f\La e=0$, so $\La e_a=e\La e_a=Ae_a$. Since $\rad_\La=\left(\begin{smallmatrix}\rad_A&M\\0&\rad_B\end{smallmatrix}\right)$ acts on the column $(Ae_a;0)$ through $\rad_A$, radicals, tops and projective covers of modules with zero $f$-part are computed in $A$; syzygies stay in this subcategory, which is equivalent to $A$-mod.

(2) $f\cdot(-)$ is exact because $f$ is idempotent. $f\La e_a=0$ by Eq.~\eqref{eq:triangular}; and $f\La e_{b'}=Be_{b'}$ because a path starting at $b'$ that enters the $f$-part cannot have left it, again by Eq.~\eqref{eq:triangular}. Applying $f$ to the minimal resolution $Q_\bullet\to S_b$ deletes the projectives at $e$-vertices and sends those at $f$-vertices to the corresponding $B$-projectives, giving an exact complex of projective $B$-modules augmented to $fS_b=S_b$. Any projective resolution is the direct sum of the minimal one and a split exact complex of projectives; taking kernels, $f\Omega^n_\La(S_b)\cong\Omega^n_B(S_b)\oplus L$. Since $e_{b'}f=e_{b'}$, the dimension inequality follows.
\end{proof}

\begin{lemma}\label{lem:gldim}
With the hypotheses of Lemma~\ref{lem:cut}: $\gldim\La=\infty$ if and only if $\gldim A=\infty$ or $\gldim B=\infty$.
\end{lemma}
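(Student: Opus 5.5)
Since $\La$ is elementary, $\gldim\La=\max_{z\in Q_0}\pd_\La S_z$, and likewise for $A$ and $B$, whose vertex sets are the $e$-vertices and the $f$-vertices. The plan is to compare projective dimensions of simples vertex by vertex using Lemma~\ref{lem:cut}.

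\emph{($\Leftarrow$).} If $\gldim A=\infty$, some $e$-vertex $a$ has $\pd_A S_a=\infty$. Lemma~\ref{lem:cut}(1) gives $\pd_\La S_a=\pd_A S_a=\infty$. If $\gldim B=\infty$, some $f$-vertex $b$ has $\pd_B S_b=\infty$, and Lemma~\ref{lem:cut}(2) gives $\pd_\La S_b\geq\pd_B S_b=\infty$. In either case $\gldim\La=\infty$.

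\emph{($\Rightarrow$).} Assume $\gldim A=d_A<\infty$ and $\gldim B=d_B<\infty$. We must bound $\pd_\La S_z$ for every vertex $z$.

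For an $e$-vertex $a$, Lemma~\ref{lem:cut}(1) gives $\pd_\La S_a=\pd_A S_a\le d_A$.

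For an $f$-vertex $b$, the inequality in Lemma~\ref{lem:cut}(2) points the wrong way, so we argue by dévissage, viewing $\La$-modules as triples $(X_A,X_B,\varphi\colon M\otimes_B X_B\to X_A)$. Since $e\La f=M$ and $f\La e=0$, we have $P_\La(b)=\La e_b$ with $f$-part $Be_b$ and $e$-part $Me_b$. So $P_\La(b)$ is the triple $(M\otimes_B Be_b,\,Be_b,\,\mathrm{id})$, i.e.\ the image of $Be_b$ under the left adjoint $\La\otimes_B-$, where $B$-modules are regarded as $\La$-modules killed by $e$.

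Take the minimal $B$-resolution $0\to Q_{d}\to\cdots\to Q_0\to S_b\to 0$ with $d\le d_B$, and apply $\La\otimes_B-$. This gives a complex of projective $\La$-modules whose $f$-part is the resolution itself, hence exact, and whose $e$-part is $M\otimes_B Q_\bullet$. Its homology is therefore concentrated in the $e$-part and equals $\Tor^B_*(M,S_b)$ in positive degrees, with $M\otimes_B S_b$ in degree $0$. Each homology group is an $A$-module, i.e.\ a $\La$-module with zero $f$-part, so it is a finite-length module filtered by simples $S_a$ at $e$-vertices. These have $\pd_\La\le d_A$ by the previous case, so each homology group has $\pd_\La\le d_A$.

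The main obstacle is assembling this into a bound on $\pd_\La S_b$. We first check the degree-$0$ piece. The cokernel in degree $0$ is $\La\otimes_B S_b$, which is an extension of $S_b$ (its $f$-part) by the $A$-module $M\otimes_B S_b$ (its $e$-part): $0\to M\otimes_BS_b\to\La\otimes_BS_b\to S_b\to0$. Hence $\pd S_b\le\max(\pd(\La\otimes_BS_b),\,\pd(M\otimes_BS_b)+1)$.

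It remains to bound $\pd(\La\otimes_BS_b)$. We would use a hypercohomology spectral sequence, or equivalently an induction on truncations of the complex $T_\bullet=\La\otimes_BQ_\bullet$. For this, write $Z_i$ and $B_i$ for the cycles and boundaries of $T_\bullet$. The short exact sequences $0\to Z_i\to T_i\to B_{i-1}\to0$ and $0\to B_i\to Z_i\to H_i\to0$, combined with $\pd H_i\le d_A$, then bound all $\pd Z_i,\pd B_i$ downward from $i=d$, yielding $\pd(\La\otimes_BS_b)\le d_B+d_A+1$.

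Altogether $\gldim\La\le d_A+d_B+2<\infty$. Alternatively, one could cite the classical formula for triangular matrix rings, $\gldim\La\le\gldim A+\gldim B+1$ (Palmér–Roos), but the dévissage above is self-contained.
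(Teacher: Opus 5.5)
Your proof is correct. The lower bound ($\Leftarrow$) is the paper's argument: Lemma~\ref{lem:cut} applied vertex by vertex.

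For the upper bound you take a genuinely different route. The paper simply cites the classical estimate $\gldim\La\leq\gldim A+\gldim B+1$ for triangular matrix algebras from Fossum--Griffith--Reiten. You prove that estimate from scratch, in three steps:
\begin{enumerate}
\item You induce the minimal $B$-resolution of $S_b$ along $\La f\otimes_B-$. This is the functor you call $\La\otimes_B-$; note that it is left adjoint to $X\mapsto fX$, not to the inclusion of modules killed by $e$.
\item You observe that the resulting complex of projectives has homology $\Tor^B_i(M,S_b)$ in positive degrees, which is supported at $e$-vertices and so has $\pd_\La\leq d_A$. In degree $0$ the homology is $\La f\otimes_B S_b$, which you correctly treat separately.
\item You absorb these error terms by cycle/boundary bookkeeping, and peel off the $A$-submodule $M\otimes_BS_b$ of $\La f\otimes_BS_b$.
\end{enumerate}

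Carried out carefully, your induction gives $\pd Z_i\leq d_A+d-i$ and $\pd B_0\leq d_A+d$. Hence $\pd(\La f\otimes_BS_b)\leq d_A+d+1$, and so $\pd_\La S_b\leq d_A+d_B+1$. This is exactly the bound the paper cites; your final $d_A+d_B+2$ is a harmless overcount.

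What the citation buys is brevity and a statement valid for arbitrary triangular rings, not just elementary algebras. What your d\'evissage buys is self-containedness: it uses nothing beyond Lemma~\ref{lem:cut} and long exact sequences. It also makes the mechanism visible. The only obstruction to lifting a finite $B$-resolution of $S_b$ to $\La$ is the $A$-valued homology $\Tor^B_\ast(M,S_b)$, and that homology already has finite projective dimension once $\gldim A<\infty$.
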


\begin{proof}
Lemma~\ref{lem:cut} gives $\gldim\La\geq\max(\gldim A,\gldim B)$. The bound $\gldim\La\leq\gldim A+\gldim B+1$ for triangular matrix algebras is classical; see \cite[Ch.~4]{FGR}.
\end{proof}

\begin{proposition}[Lifting]\label{prop:lift}
Assume Eq.~\eqref{eq:triangular}. If $A$ or $B$ is of infinite $+$ global dimension, then so is $\La$. Consequently, if $A$ and $B$ both satisfy Gap A, then $\La$ satisfies Gap A.
\end{proposition}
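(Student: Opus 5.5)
We use the syzygy-return criterion, Theorem~\ref{thm:criterion}(5), in both directions: infinite $+$ global dimension of an algebra means that some vertex $x$ has $e_x\Omega^n(S_x)\neq0$ for infinitely many $n$. Assume \eqref{eq:triangular} and apply the criterion to whichever corner is of infinite $+$ global dimension. Note that $A=e\La e$ and $B=f\La f$ are again elementary bound quiver algebras, on the full subquivers at the $e$- and $f$-vertices. The relations are the corresponding Peirce pieces of $I$, and admissibility is inherited because paths cannot leave and re-enter a part. So the criterion applies to $A$ and $B$ themselves.

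\emph{Case $A$.} Suppose there is an $e$-vertex $a$ with $(S^A_n)_{aa}\neq0$ for infinitely many $n$. By Lemma~\ref{lem:cut}(1), the minimal $\La$-resolution of $S_a$ coincides with the minimal $A$-resolution, so $(S^\La_n)_{aa}=(S^A_n)_{aa}$ for all $n$. Hence $(S^\La_n)_{aa}\neq0$ infinitely often, and $\La$ is of infinite $+$ global dimension by Theorem~\ref{thm:criterion}.

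\emph{Case $B$.} Suppose there is an $f$-vertex $b$ with $(S^B_n)_{bb}\neq0$ for infinitely many $n$. Lemma~\ref{lem:cut}(2) gives the inequality $(S^\La_n)_{bb}\geq(S^B_n)_{bb}$, since the extra projective summand $L$ only adds dimension. Hence $(S^\La_n)_{bb}\neq0$ infinitely often, and the criterion again applies. This is the step where the weakening from top return to dimension return, noted in Remark~\ref{rem:extconj}, is essential. The non-minimality of $f$ applied to the resolution prevents us from comparing Ext groups, but dimension vectors are compared monotonically.

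For the consequence, suppose $A$ and $B$ satisfy Gap A and $\gldim\La=\infty$. By Lemma~\ref{lem:gldim}, $\gldim A=\infty$ or $\gldim B=\infty$. Gap A for that corner makes it of infinite $+$ global dimension, and the first part then lifts this to $\La$. No step seems to be a genuine obstacle. The only point requiring care is checking that the corners are elementary, so that Theorem~\ref{thm:criterion} applies to them; in the general separable case one would instead use the trace formulation (4), which is idempotent-wise and needs no change.
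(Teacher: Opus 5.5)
Your proof is correct and is essentially the paper's own argument. It uses Theorem~\ref{thm:criterion}(5) on the relevant corner, the equality of Lemma~\ref{lem:cut}(1) for $A$, the inequality $(S^\La_n)_{bb}\geq(S^B_n)_{bb}$ of Lemma~\ref{lem:cut}(2) for $B$, and Lemma~\ref{lem:gldim} for the Gap~A consequence. Your check that $A$ and $B$ are again admissible bound quiver algebras, which is needed to apply the criterion to them and is left implicit in the paper, is right; your closing aside on the non-elementary case is unnecessary, since Lemma~\ref{lem:cut} and Theorem~\ref{thm:criterion} are stated only for elementary algebras.
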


\begin{proof}
If $A$ is infinite $+$, Theorem~\ref{thm:criterion}(5) provides an $e$-vertex $a$ with $(S^A_n)_{aa}\neq0$ for infinitely many $n$; by Lemma~\ref{lem:cut}(1), $(S^\La_n)_{aa}=(S^A_n)_{aa}$, and Theorem~\ref{thm:criterion} applies to $\La$. If $B$ is infinite $+$, Lemma~\ref{lem:cut}(2) gives $(S^\La_n)_{bb}\geq(S^B_n)_{bb}\neq0$ infinitely often. For the last claim: if $\gldim\La=\infty$, then by Lemma~\ref{lem:gldim} one of the corners has infinite global dimension, hence is infinite $+$ by hypothesis, hence $\La$ is infinite $+$.
\end{proof}

\subsection{Strongly connected components}

Let $\CC_1,\dots,\CC_r$ be the strongly connected components (SCCs) of $Q$, topologically ordered so that arrows go from $\CC_i$ to $\CC_j$ only if $i\leq j$. For an SCC $\CC$ put $\varepsilon_\CC=\sum_{x\in\CC}e_x$ and $\La_\CC:=\varepsilon_\CC\,\La\,\varepsilon_\CC$.

\begin{lemma}\label{lem:scc-corner}
Every path in $Q$ between two vertices of $\CC$ stays inside $\CC$. Consequently $\La_\CC\cong kQ_\CC/I_\CC$ for the full subquiver $Q_\CC$ on $\CC$ with the induced relations, independently of the rest of $Q$.
\end{lemma}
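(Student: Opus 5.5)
The first claim is purely graph-theoretic, and I will settle it directly from the definition of a strongly connected component. Let $x,y\in\CC$ and let $p=x\to z_1\to\cdots\to z_m\to y$ be a path in $Q$. For any intermediate vertex $z_i$ there is a path from $x$ to $z_i$ (an initial segment of $p$) and a path from $z_i$ to $y$ (a terminal segment). Since $x,y\in\CC$, there is also a path from $y$ back to $x$. Concatenating gives paths $z_i\to y\to x$ and $x\to z_i$, so $z_i$ and $x$ are mutually reachable and $z_i\in\CC$. Hence every vertex of $p$, and therefore every arrow of $p$, lies in the full subquiver $Q_\CC$.

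For the algebra statement, I would first note that $\varepsilon_\CC\La\varepsilon_\CC$ is spanned by the classes of paths in $Q$ starting and ending in $\CC$. By the first claim these are exactly the paths of $Q_\CC$. So the inclusion $kQ_\CC\hookrightarrow kQ$ has image $\varepsilon_\CC kQ\varepsilon_\CC$. Composing with the projection gives a surjection $\pi\colon kQ_\CC\to\La_\CC$, and I define $I_\CC:=\ker\pi=I\cap kQ_\CC$, which is what "induced relations" means. It remains to check that $I_\CC$ is admissible, so that $\La_\CC\cong kQ_\CC/I_\CC$ is a bound quiver algebra of the intended kind.

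The inclusion $I_\CC\subseteq (kQ_\CC^+)^2$ holds because $I\subseteq (kQ^+)^2$: an element of $I$ supported on paths of $Q_\CC$ is a combination of paths of length $\ge 2$ in $Q_\CC$. The bound $(kQ_\CC^+)^N\subseteq I_\CC$ follows from $(kQ^+)^N\subseteq I$ for the same $N$, since paths of length $N$ in $Q_\CC$ are paths of length $N$ in $Q$. The phrase "independently of the rest of $Q$" then means that $I_\CC=\varepsilon_\CC I\varepsilon_\CC$ consists of elements of $I$ that involve only paths inside $Q_\CC$.

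The one subtle point is this: one might worry that $\varepsilon_\CC I\varepsilon_\CC$ contains relations generated in $I$ via detours through other components. These are still elements of $kQ_\CC$ by the first claim, so the description is intrinsic to the ideal intersected with $kQ_\CC$. I expect no real obstacle here; the argument is a direct unwinding of definitions, and the key input is only the path-closure property.
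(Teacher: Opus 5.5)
Your proof is correct and uses the same idea as the paper's one-line argument: a path that leaves $\CC$ and returns would make its intermediate vertices mutually reachable with $\CC$. You also correctly spell out the ``consequently'' part, which the paper leaves implicit, via $\varepsilon_\CC kQ\varepsilon_\CC=kQ_\CC$ and $I_\CC=I\cap kQ_\CC=\varepsilon_\CC I\varepsilon_\CC$ together with the admissibility check. For ``independently of the rest of $Q$'' in its strongest sense, apply your first claim to a nonzero spanning element $\varepsilon_\CC a\rho b\varepsilon_\CC$ of $\varepsilon_\CC I\varepsilon_\CC$, with $a,b$ paths and $\rho$ a uniform generator of $I$: this puts $a$, $\rho$, $b$ all inside $kQ_\CC$, so $I_\CC$ is generated by the generators of $I$ supported on $Q_\CC$.
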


\begin{proof}
A path leaving $\CC$ enters a strictly later component and can never return, since returning would merge the components.
\end{proof}

\begin{theorem}[SCC reduction]\label{thm:scc}
Let $\La=kQ/I$ be elementary.
\begin{enumerate}
\item If $\gldim\La=\infty$, then $\gldim\La_\CC=\infty$ for some SCC $\CC$ of $Q$.
\item If some $\La_\CC$ is of infinite $+$ global dimension, then so is $\La$.
\item Consequently, if every SCC-corner $\La_\CC$ with $\gldim\La_\CC=\infty$ is of infinite $+$ global dimension, then $\La$ satisfies Gap A. In particular, Gap A holds for all elementary algebras if and only if it holds for all elementary algebras with strongly connected quiver, and a minimal Gap-A failure has strongly connected quiver.
\end{enumerate}
The same statements hold with ``infinite co$+$'' in place of ``infinite $+$'' (Proposition~\ref{prop:op}).
\end{theorem}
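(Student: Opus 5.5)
The plan is an induction on the number $r$ of strongly connected components, peeling off one component at a time with the triangular decomposition of Lemma~\ref{lem:cut}. Order the SCCs topologically as $\CC_1,\dots,\CC_r$. Put $f:=\varepsilon_{\CC_1}$ and $e:=1-f$. No arrow enters $\CC_1$ from a later component, so the $f$-vertex set is closed under predecessors. Equivalently, there is no path from an $e$-vertex to an $f$-vertex, and \eqref{eq:triangular} holds. Then $B=f\La f=\La_{\CC_1}$, and $A=e\La e$ is the corner on the remaining vertices. The strongly connected components of the quiver of $A$ are $\CC_2,\dots,\CC_r$. Moreover, for each $j\geq 2$ the corner of $A$ at $\CC_j$ is $\varepsilon_{\CC_j}A\varepsilon_{\CC_j}=\La_{\CC_j}$, because $\varepsilon_{\CC_j}e=\varepsilon_{\CC_j}$. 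This identification is what allows the induction to run.

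Before starting, we must make sure $A$ is again a bound quiver algebra. This is the one place needing care. A corner $e\La e$ for a sum of vertex idempotents can in general acquire new arrows from paths passing through the removed vertices. Here, however, any path between $e$-vertices avoids $\CC_1$: it cannot enter $\CC_1$ from an $e$-vertex, since $f\La e=0$. So $A\cong kQ'/I'$ with $Q'$ the full subquiver on the $e$-vertices, by the same argument as in Lemma~\ref{lem:scc-corner}. The same reasoning covers $B$, which is exactly Lemma~\ref{lem:scc-corner}.

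For (1), argue by induction on $r$. If $r=1$ there is nothing to prove. Otherwise Lemma~\ref{lem:gldim} gives $\gldim A=\infty$ or $\gldim B=\infty$. In the second case $\CC_1$ works. In the first case, apply the induction hypothesis to $A$, which has $r-1$ components, and use the identification of its SCC-corners with the $\La_{\CC_j}$.

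For (2), suppose $\La_{\CC_j}$ is infinite $+$. If $j=1$, Proposition~\ref{prop:lift} with $B=\La_{\CC_1}$ gives the claim directly. If $j\geq2$, induction shows that $A$ is infinite $+$, and Proposition~\ref{prop:lift} lifts this from $A$ to $\La$.

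Part (3) is now formal. If $\gldim\La=\infty$, (1) produces a component $\CC$ with $\gldim\La_\CC=\infty$. By hypothesis $\La_\CC$ is infinite $+$, so $\La$ is infinite $+$ by (2).

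For the "in particular" statement, one direction is trivial. For the other, the SCC-corners $\La_\CC$ are elementary algebras with strongly connected quiver, by Lemma~\ref{lem:scc-corner}. So if Gap A holds for those, the hypothesis of (3) is met and Gap A holds for $\La$.

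For minimality, take a Gap-A failure $\La$. By (1) some $\La_\CC$ has infinite global dimension. If $\La_\CC$ were infinite $+$, then (2) would make $\La$ infinite $+$, contradicting the failure. Hence $\La_\CC$ is itself a Gap-A failure, with strongly connected quiver and at most as many vertices, with equality only when $\CC=Q_0$. So a failure with the minimal number of vertices has strongly connected quiver.

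The co$+$ version follows from the same induction, applying Proposition~\ref{prop:op} to the opposite algebra. Passing to the opposite algebra reverses arrows and simply reverses the topological order of the components.
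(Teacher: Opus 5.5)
Your proposal is correct and follows the paper's proof essentially step for step. You peel off the source component $\CC_1$ via the triangular decomposition, induct on the number of components using Lemma~\ref{lem:gldim} for (1) and Proposition~\ref{prop:lift} for (2), and combine them for (3); where you cite Proposition~\ref{prop:lift} for the $A$-case, the paper unpacks it as Lemma~\ref{lem:cut}(1) plus Theorem~\ref{thm:criterion}(5), and your checks that $A$ is again a bound quiver algebra with SCC-corners $\La_{\CC_j}$, and that ``minimal'' means minimal number of vertices, only make explicit what the paper asserts briefly.
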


\begin{proof}
Peel off the first component: $f:=\varepsilon_{\CC_1}$ satisfies Eq.~\eqref{eq:triangular} with $e=1-f$, because $\CC_1$ receives no arrows from outside itself. Here $B=\La_{\CC_1}$, and since no path between $e$-vertices can traverse $\CC_1$, the corner $A=e\La e$ is the bound quiver algebra on $Q\setminus\CC_1$ with induced relations, whose SCCs are $\CC_2,\dots,\CC_r$ with unchanged corner algebras (Lemma~\ref{lem:scc-corner}).

(1) By Lemma~\ref{lem:gldim}, $\gldim\La=\infty$ forces $\gldim\La_{\CC_1}=\infty$, and we are done, or $\gldim A=\infty$; induct on the number of components.

(2) If $\CC=\CC_1$, apply Proposition~\ref{prop:lift}. Otherwise, by induction $A$ is infinite $+$; by Lemma~\ref{lem:cut}(1) the relevant diagonal syzygy entries of $A$ equal those of $\La$, so $\La$ is infinite $+$ by Theorem~\ref{thm:criterion}(5).

(3) Combine (1) and (2). For the minimality statement: if $\La$ fails Gap A, by (1) some $\La_\CC$ has infinite global dimension; were $\La_\CC$ infinite $+$, (2) would make $\La$ infinite $+$, a contradiction; hence $\La_\CC$ itself fails Gap A, on a strongly connected quiver.
\end{proof}

\begin{corollary}[Per-component criteria]\label{cor:omnibus}
Gap A holds for $\La=kQ/I$ as soon as each strongly connected component $\CC$ of $Q$ satisfies at least one of:
\begin{enumerate}
\item $|\CC|=1$ (with or without loops);
\item $|\CC|=2$;
\item all Peirce components of $\La_\CC$ are nonzero: $y\La x\neq0$ for all $x,y\in\CC$;
\item $\La_\CC$ is $n$-Koszul, or the Yoneda category of $\La_\CC$ admits a finitely generated infinite dimensional subcategory.
\end{enumerate}
In particular, Gap A holds unconditionally whenever every cycle of $Q$ is a loop, and whenever every strongly connected component of $Q$ has at most two vertices.
\end{corollary}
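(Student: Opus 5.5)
By Theorem~\ref{thm:scc}(3), it suffices to show that each strongly connected component $\CC$ satisfying one of (1)--(4) has the following property: if $\gldim\La_\CC=\infty$, then $\La_\CC$ is of infinite $+$ global dimension. By Lemma~\ref{lem:scc-corner}, $\La_\CC$ is again an elementary bound quiver algebra on $Q_\CC$ with the induced relations, and its quiver is strongly connected. So I will treat each criterion as a statement about an elementary algebra $\Gamma:=\La_\CC$ of infinite global dimension, and verify the infinite $+$ property either through Theorem~\ref{thm:criterion}(5) or directly from the definition in \cite[Def.~5.4]{CLMS}.

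\emph{Case (3), all Peirce components nonzero.} Since $\gldim\Gamma=\infty$ and $\Gamma$ has finitely many simple modules, some $S_x$ has infinite projective dimension. Then $\Omega^n(S_x)\neq0$ for every $n$, so its top is nonzero, i.e.\ $C^{(n)}_{xy}\neq0$ for some $y=y(n)$. By pigeonhole over the finitely many vertices, a single $y$ has $C^{(n)}_{xy}=\dim\Tor_n(k_y,{}_xk)\neq0$ for infinitely many $n$. The defining pair is $(x,y)$, and the required non-vanishing of the corresponding Peirce component $x\Gamma y$ holds by hypothesis (3). Hence $\Gamma$ is infinite $+$.

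\emph{Case (2), $|\CC|=2$.} I reduce this to case (3). Write $\CC=\{x,y\}$. Strong connectivity gives arrows $x\to y$ and $y\to x$. Since $I$ is admissible, the arrows are nonzero in $\Gamma$, so both off-diagonal Peirce components are nonzero. The diagonal components contain $e_x$ and $e_y$. Thus all four Peirce components are nonzero.

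\emph{Case (1), $|\CC|=1$.} $\Gamma=\Gamma_x$ is local, and every module is supported at $x$. Infinite global dimension gives $\Omega^n(S_x)\neq0$, hence $e_x\Omega^n(S_x)=\Omega^n(S_x)\neq0$ for all $n$. Theorem~\ref{thm:criterion}(5) applies. (Alternatively, this is also an instance of case (3).)

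\emph{Case (4).} Here I would invoke the corresponding results of \cite[\S6]{CLMS}. They show that an $n$-Koszul algebra of infinite global dimension, or one whose Yoneda category admits a finitely generated infinite dimensional subcategory, has infinite $\HH^\tau_*$, hence is infinite $+$ by Theorem~\ref{thm:criterion}. This step is where I expect the only real friction: one must check that the hypotheses in \cite{CLMS} are stated for exactly the algebra $\La_\CC$ (elementary, with induced relations) and yield infinite $+$ global dimension, not merely the extension conjecture. Lemma~\ref{lem:scc-corner} guarantees that the corner is an honest bound quiver algebra, so the cited results apply verbatim.

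For the final sentence: if every cycle of $Q$ is a loop, every strongly connected component is a single vertex, so case (1) applies. If every component has at most two vertices, cases (1) and (2) apply. In both situations no extension-conjecture input is used.
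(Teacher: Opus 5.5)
Your proposal is correct and follows the paper's proof: the reduction through Theorem~\ref{thm:scc}(3), case (2) reduced to full Peirce support via the arrows in both directions that strong connectivity provides, case (1) via Theorem~\ref{thm:criterion}(5), and case (4) by citation of \cite[\S6]{CLMS}. The one difference is in case (3): you give a short pigeonhole argument directly from the definition of infinite $+$ global dimension, correctly pairing $\Tor_n(k_y,{}_xk)\neq0$ with the nonzero Peirce component $x\La y$, whereas the paper simply cites \cite[Prop.~6.8]{CLMS}; this makes your version self-contained at that step.
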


\begin{proof}
By Theorem~\ref{thm:scc}(3) it suffices to check that each $\La_\CC$ of infinite global dimension is infinite $+$.
(1) If $\CC=\{x\}$ then $\La_\CC=e_x\La e_x$ is local; if $\gldim\La_\CC=\infty$ it is not $k$, i.e.\ nontrivial local, and the minimal resolution of its unique simple never terminates, with $(S_n)_{xx}=\dim\Omega^n\neq0$ for all $n$; this is \cite[Prop.~6.1]{CLMS} in the form of Theorem~\ref{thm:criterion}(5).
(2) If $|\CC|=2$ and $Q_\CC$ is strongly connected, there are arrows in both directions, so all four Peirce components of $\La_\CC$ are nonzero (the diagonal ones contain the idempotents, the off-diagonal ones contain arrows), and \cite[Prop.~6.8]{CLMS} applies.
(3) is \cite[Prop.~6.8]{CLMS}; (4) is \cite[Cor.~6.6]{CLMS} and \cite[Thm.~6.5]{CLMS}.
The final sentence: ``every cycle of $Q$ is a loop'' means all SCCs are singletons.
\end{proof}

\begin{proposition}[Opposite side]\label{prop:op}
$\La$ is of infinite co$+$ global dimension if and only if $\La^{\mathrm{op}}$ is of infinite $+$ global dimension. Since $Q_{\La^{\mathrm{op}}}=Q^{\mathrm{op}}$ has the same SCC structure and $\gldim\La^{\mathrm{op}}=\gldim\La$, all results of this section hold verbatim for infinite co$+$ global dimension, hence also for the $\tau$-version of Happel's question \cite[\S5]{CLMS}.
\end{proposition}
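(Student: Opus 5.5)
The natural route is through the definition of infinite co$+$ global dimension in \cite[\S5]{CLMS}, which I take to be the cohomological twin of Definition~5.4 there. Infinite $+$ global dimension asks for a pair $(y,x)$ with $y\La x\neq0$ and $\Tor^\La_*(k_x,{}_yk)$ nonzero in infinitely many degrees. The co$+$ version asks instead for a pair with $x\La y\neq0$, that is, $y\La x$ replaced by its transpose Peirce component, with the same infinitely-many-degrees $\Tor$ or $\Ext$ condition. Equivalently, it corresponds to the cochain spaces $\HomOp_{E^e}(\Tor_n^\La(E,E),\La)$, of dimension $a_n=\tr(C^{(n)}C^{\mathsf T})$, being nonzero infinitely often. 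I will first write both notions as conditions on the pairs of vertices, and then compare them under passage to $\La^{\mathrm{op}}$.

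Next I will record how the data transform under $\La\mapsto\La^{\mathrm{op}}$. The quiver becomes $Q^{\mathrm{op}}$ on the same vertex set $Q_0$. The Peirce components satisfy $y\La^{\mathrm{op}}x=x\La y$, so $C^{\mathrm{op}}=C^{\mathsf T}$. For $\Tor$, I use $\Tor^{\La^{\mathrm{op}}}_n(k_y,{}_xk)\cong\Tor^\La_n(k_x,{}_yk)$: right $\La^{\mathrm{op}}$-modules are left $\La$-modules, and the balanced tensor product is symmetric under this swap. Hence $C^{(n),\mathrm{op}}=(C^{(n)})^{\mathsf T}$.

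With these, the equivalence is a bookkeeping check. The $+$ condition for $\La^{\mathrm{op}}$ at a pair $(y,x)$ reads $x\La y\neq0$ together with $\Tor^\La_*(k_y,{}_xk)$ nonzero infinitely often, and after relabelling this is exactly the co$+$ condition for $\La$. In trace form the same check is
\[
\tr\bigl((C^{(n)})^{\mathsf T}C^{\mathsf T}\bigr)=\tr\bigl(C\,C^{(n)}\bigr)\neq\tr\bigl(C^{(n)}C^{\mathsf T}\bigr)\ \text{in general},
\]
so I must be careful to identify $a_n(\La)$ with $b_n(\La^{\mathrm{op}})$ through the transposes in the correct slots. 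The main obstacle is exactly this matching: confirming which of $C$ or $C^{\mathsf T}$ occurs in CLMS's co$+$ definition, which is a convention check. In trace form the right matching is $b_n(\La^{\mathrm{op}})=\tr\bigl((C^{(n)})^{\mathsf T}C^{\mathsf T}\bigr)=\tr\bigl(CC^{(n)}\bigr)$ compared against $a_n(\La)=\tr\bigl(C^{(n)}C^{\mathsf T}\bigr)$, entrywise and not only in total. The robust way to do this is the pairwise form, summing nonnegative entries pair by pair, since each summand is nonnegative and a product is nonzero exactly when both factors are.

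For the remaining claims, I will use that $\gldim\La^{\mathrm{op}}=\gldim\La$, which is classical for finite dimensional algebras because both equal the projective dimension of $E$ as a module over $\La$ or $\La^{\mathrm{op}}$. Reversing all arrows preserves strong connectivity, so $Q$ and $Q^{\mathrm{op}}$ have the same SCCs, and $(\La^{\mathrm{op}})_{\CC}=(\La_\CC)^{\mathrm{op}}$. The triangular condition \eqref{eq:triangular} for $\La^{\mathrm{op}}$ holds with $e$ and $f$ swapped. Applying Theorem~\ref{thm:scc} to $\La^{\mathrm{op}}$ and translating each statement back through the equivalence gives the co$+$ versions.
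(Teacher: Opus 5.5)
\textbf{There is a genuine gap, and it is at the decisive step.} Your route is the paper's own: transport Peirce components and $\Tor$ along $\La\mapsto\La^{\mathrm{op}}$, then compare the definitions pair by pair. The paper's one-line proof makes the identical comparison and has the same defect. The issue you set aside as a ``convention check'' is exactly where the argument breaks.

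With your identifications, the $+$ condition for $\La^{\mathrm{op}}$ at $(y,x)$ reads: $x\La y\neq0$ and $\Tor^\La_*(k_y,{}_xk)$ infinite. The co$+$ condition for $\La$ at $(y,x)$ reads: $x\La y\neq0$ and $\Tor^\La_*(k_x,{}_yk)$ infinite. So the Peirce factor agrees but the $\Tor$ factor is transposed. Relabelling $x\leftrightarrow y$ repairs the $\Tor$ factor but turns $x\La y$ into $y\La x$, and what you obtain is precisely the $+$ condition for $\La$ itself. Passing to $\La^{\mathrm{op}}$ flips both indices at once. It therefore preserves whether the $\Tor$ at a pair is matched with the Peirce component in the same or the opposite direction, and never exchanges the two types.

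Your own trace computation confirms this. By cyclicity of the trace, $b_n(\La^{\mathrm{op}})=\tr\bigl((C^{(n)})^{\mathsf T}C^{\mathsf T}\bigr)=\tr(CC^{(n)})=\tr(C^{(n)}C)=b_n(\La)$, and likewise $a_n(\La^{\mathrm{op}})=a_n(\La)$. Already for $\La=kQ$ with $Q\colon 1\to 2$ one has $b_1(\La^{\mathrm{op}})=0\neq 1=a_1(\La)$. So no pairwise or degreewise matching exists, and since co$+$ is the condition governed by $a_n$ (the $\tau$-cohomology), no choice of convention rescues it. Your nonnegativity remark cannot help either: the summands $C_{yx}C^{(n)}_{xy}$ and $C_{yx}C^{(n)}_{yx}$ involve different entries.

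Conceptually, the minimal bimodule resolution of $\La^{\mathrm{op}}$ is that of $\La$ with the two actions swapped. Hence $\HH^\tau_*$ and the $\tau$-cohomology are each invariant under $\mathrm{op}$, and so are infinite $+$ and infinite co$+$ separately. The first sentence of the statement is therefore equivalent to ``co$+\iff+$'' for every algebra. For infinite global dimension this would make Gap A equivalent to the $\tau$-version of Happel's question. No relabelling supplies that, and your argument, like the paper's, leaves it unproved.

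The second sentence can be rescued, but not by transport through $\La^{\mathrm{op}}$. The co$+$ versions of the results of Section~\ref{sec:gapA} follow directly from the pair definition:
\begin{itemize}
\item[(i)] Lemma~\ref{lem:cut} gives $\dim\Ext^n_\La(S_y,S_x)=\dim\Ext^n_A(S_y,S_x)$ for $e$-vertices.
\item[(ii)] It also gives $\dim\Ext^n_\La(S_y,S_x)\geq\dim\Ext^n_B(S_y,S_x)$ for $f$-vertices, because the minimal $B$-resolution is a summand of the $f$-part of the $\La$-resolution.
\item[(iii)] The Peirce components of the corners are Peirce components of $\La$.
\end{itemize}
This direct lifting argument is what your final step should use.
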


\begin{proof}
Under the standard identifications, $y\La^{\mathrm{op}}x=x\La y$ and $\Tor^{\La^{\mathrm{op}}}_n(k_x,{}_yk)\cong\Tor^\La_n(k_y,{}_xk)$; comparing \cite[Def.~5.4]{CLMS} for $\La$ and for $\La^{\mathrm{op}}$ gives the equivalence.
\end{proof}

\begin{remark}[Comparison with CLMS~\cite{CLMS}]\label{rem:compare}
Corollary~\ref{cor:omnibus} strictly extends the unconditional reach of \cite[\S6]{CLMS}: Prop.~6.11 there obtains infinite $\pm$ global dimension from a loop only under the extension conjecture at the loop vertex, whereas the singleton-SCC case above is unconditional. The mechanism is Remark~\ref{rem:extconj}: the extension conjecture asserts \emph{top return} of syzygies at the loop vertex, while infinite $+$ only requires \emph{dimension return}, which the local corner supplies for free. This does \emph{not} prove the extension conjecture; nor does it cover a loop sitting inside a larger strongly connected component (Remark~\ref{prob:loop}). Note also that \cite[Ex.~6.10]{CLMS} shows the criteria of Corollary~\ref{cor:omnibus} are genuinely independent of Yoneda finite generation.
\end{remark}

\begin{remark}[Novelty caveat]\label{rem:novelty-gapA}
The results of this section are elementary consequences of \cite{CLMS} together with classical homological algebra of triangular matrix rings; a systematic check against the extension series of Cibils, Lanzilotta, Marcos, Redondo and Solotar on null-square and bounded extensions \cite{CRS,CLMSsplit,CLMSbounded} has not located them there, where closure of the class of algebras satisfying Han's conjecture under related gluings is proved via Jacobi--Zariski sequences, and against the recollement-theoretic reductions of Han's conjecture to derived $2$-simple rings (Wang--Xu--Zhang--Zhou \cite{WXZZ}; see also the survey \cite{Cruz}). Those reductions operate on the full conjecture and use derived-category machinery; the present reduction is Morita-level, applies to Gap A alone, and is compatible with the syzygy-return criterion, which is what the search program of Section~\ref{sec:program} requires.
\end{remark}

\section{The $\chi$-law}\label{sec:chi}

In this section, $\La$ is any finite dimensional $k$-algebra. We use cyclic homology $\HC_*(\La)$ and Connes' periodicity exact sequence
\begin{equation}\label{eq:SBI}
\cdots\longrightarrow \HH_n(\La)\xrightarrow{\;I\;}\HC_n(\La)\xrightarrow{\;S\;}\HC_{n-2}(\La)\xrightarrow{\;B\;}\HH_{n-1}(\La)\xrightarrow{\;I\;}\HC_{n-1}(\La)\longrightarrow\cdots
\end{equation}
valid over any field \cite[2.2.1]{Loday}, with $\HC_{-1}=\HC_{-2}=0$. Since $\La$ is finite dimensional, each $\HC_n(\La)$ is finite dimensional (the cyclic bicomplex in total degree $n$ involves only the finite dimensional spaces $\La^{\otimes m}$, $m\leq n+1$). Define $\HP_i(\La):=\varprojlim\big(\HC_{i+2m}(\La),S\big)$ for $i=0,1$; whenever $S$ is eventually an isomorphism the limit is attained at any large stage and no $\varprojlim^1$ issues arise.

\begin{lemma}[Bookkeeping]\label{lem:book}
Write $h_j=\dim\HH_j(\La)$, $c_j=\dim\HC_j(\La)$, and set
\[
i_j:=\rk\big(I\colon\HH_j\to\HC_j\big),\quad
s_j:=\rk\big(S\colon\HC_j\to\HC_{j-2}\big),\quad
\beta_j:=\rk\big(B\colon\HC_j\to\HH_{j+1}\big),
\]
with $s_0=s_1=0$ and $\beta_{-1}:=0$. Then exactness of \eqref{eq:SBI} gives, for all $j\geq0$,
\begin{equation}\label{eq:threerel}
h_j=i_j+\beta_{j-1},\qquad c_j=i_j+s_j,\qquad c_j=s_{j+2}+\beta_j ,
\end{equation}
and consequently, for every $M\geq0$,
\[
\chi_M:=\sum_{j=0}^M(-1)^jh_j\;=\;(-1)^M\big(s_{M+2}-s_{M+1}+\beta_M\big).
\]
\end{lemma}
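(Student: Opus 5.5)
The plan is to read off the three relations directly from exactness of \eqref{eq:SBI} at its three kinds of terms, using rank--nullity, and then to telescope.

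\emph{Exactness at $\HH_j$.} The sequence there reads $\HC_{j-1}\xrightarrow{B}\HH_j\xrightarrow{I}\HC_j$. Exactness gives $\ker I=\operatorname{im}B$, and therefore
\[
h_j=\rk I+\dim\ker I=i_j+\beta_{j-1}.
\]
For $j=0$ we have $\beta_{-1}=0$, because $\HC_{-1}=0$.

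\emph{Exactness at $\HC_j$.} The sequence reads $\HH_j\xrightarrow{I}\HC_j\xrightarrow{S}\HC_{j-2}$. Exactness gives $\ker S=\operatorname{im}I$, so $c_j=s_j+i_j$. The convention $s_0=s_1=0$ is consistent with $\HC_{-1}=\HC_{-2}=0$.

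\emph{Exactness at $\HC_{j-2}$ (shifted).} The portion $\HC_{j+2}\xrightarrow{S}\HC_j\xrightarrow{B}\HH_{j+1}$ is exact, so $\ker B=\operatorname{im}S$. Hence $c_j=\beta_j+s_{j+2}$. This establishes \eqref{eq:threerel}.

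\emph{Telescoping.} Combining the first two relations gives
\[
h_j=c_j-s_j+\beta_{j-1}.
\]
Substituting the third relation $c_j=s_{j+2}+\beta_j$ yields
\[
h_j=(s_{j+2}-s_j)+(\beta_j+\beta_{j-1}).
\]
Now take the alternating sum over $j=0,\dots,M$.

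The $\beta$-part telescopes: $\sum_{j=0}^M(-1)^j(\beta_j+\beta_{j-1})=(-1)^M\beta_M$, since $\beta_{-1}=0$.

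The $s$-part splits by parity, because $s_{j+2}-s_j$ links indices of the same parity with the same sign $(-1)^j$. Each parity chain collapses to its top term, with the bottom terms $s_0,s_1$ vanishing. The result is
\[
(-1)^M s_{M+2}+(-1)^{M-1}s_{M+1}=(-1)^M(s_{M+2}-s_{M+1}).
\]
Adding the two parts gives the stated formula for $\chi_M$.

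The only point requiring care is the sign and index bookkeeping in the parity telescoping; in particular, one must check that the chain ending at $M-1$ contributes $s_{M+1}$ with sign $(-1)^{M-1}$. I would verify this by checking $M=0$ and $M=1$ directly.
- For $M=0$: $h_0=c_0=s_2+\beta_0$.
- For $M=1$: $h_0-h_1=s_2+\beta_0-(c_1+\beta_0)=s_2-s_3-\beta_1$, since $h_1=i_1+\beta_0=c_1+\beta_0$ (using $s_1=0$) and $c_1=s_3+\beta_1$. This agrees with the formula.
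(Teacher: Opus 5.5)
Your proposal is correct and matches the paper's proof: the paper also reads off the three relations from exactness at $\HH_j$ and at the two occurrences of $\HC_j$, rewrites $h_j=(s_{j+2}-s_j)+(\beta_j+\beta_{j-1})$, and telescopes the $s$-sum (using $s_0=s_1=0$) and the $\beta$-sum (using $\beta_{-1}=0$). Your low-degree checks at $M=0,1$ are consistent with the formula.
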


\begin{proof}
In \eqref{eq:SBI} each $\HC_j$ occurs twice: once with incoming $I$ and outgoing $S$, once with incoming $S$ and outgoing $B$. Exactness at $\HH_j$ gives $\ker I_j=\operatorname{im}(B\colon\HC_{j-1}\to\HH_j)$, whence $h_j=i_j+\beta_{j-1}$; exactness at the two occurrences of $\HC_j$ gives $\ker S_j=\operatorname{im}I_j$ and $\ker(B\colon\HC_j\to\HH_{j+1})=\operatorname{im}S_{j+2}$, whence the other two relations. From \eqref{eq:threerel}, $i_j=s_{j+2}+\beta_j-s_j$, so
\[
\chi_M=\sum_{j=0}^M(-1)^j\big(s_{j+2}-s_j\big)+\sum_{j=0}^M(-1)^j\beta_j+\sum_{j=0}^M(-1)^j\beta_{j-1}.
\]
The first sum telescopes to $(-1)^M(s_{M+2}-s_{M+1})$ using $s_0=s_1=0$; the last equals $-\sum_{j=0}^{M-1}(-1)^j\beta_j$, so the $\beta$-terms collapse to $(-1)^M\beta_M$.
\end{proof}

\begin{theorem}[$\chi$-law]\label{thm:chi}
Let $\La$ be a finite dimensional $k$-algebra with $\HH_n(\La)=0$ for all $n>N$.
\begin{enumerate}
\item Over any field: $S\colon\HC_n(\La)\to\HC_{n-2}(\La)$ is an isomorphism for all $n\geq N+2$; hence $\dim\HC_n$ depends only on the parity of $n$ for $n\geq N$, the towers defining $\HP_i$ stabilize, and
\[
\chi(\HH_*):=\sum_{i=0}^{N}(-1)^i\dim_k\HH_i(\La)\;=\;\dim_k\HP_0(\La)-\dim_k\HP_1(\La).
\]
\item If moreover $\operatorname{char}k=0$, then $\HP_*(\La)\cong\HP_*(E)$ for $E=\La/\rad$, and
\[
\chi(\HH_*)=\dim_k E/[E,E].
\]
If $\La$ is elementary, $\chi(\HH_*)=v=|Q_0|$; more generally, if $E$ is split, $\chi(\HH_*)$ equals the number of isomorphism classes of simple $\La$-modules.
\end{enumerate}
\end{theorem}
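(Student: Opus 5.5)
Part (1) follows from the exact sequence \eqref{eq:SBI} and Lemma~\ref{lem:book}; part (2) uses Goodwillie's nilpotent invariance of periodic cyclic homology.

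\emph{Part (1).} Let $n\geq N+2$. The segment $\HH_n\to\HC_n\xrightarrow{S}\HC_{n-2}\xrightarrow{B}\HH_{n-1}$ of \eqref{eq:SBI} has $\HH_n=\HH_{n-1}=0$, since $n-1\geq N+1>N$. Exactness then forces $S$ to be injective and surjective, hence an isomorphism. So $c_n=c_{n-2}$ for $n\geq N+2$, and $\dim\HC_n$ depends only on the parity of $n$ once $n\geq N$. Each tower defining $\HP_i$ is eventually a sequence of isomorphisms between finite dimensional spaces. Hence $\varprojlim$ is attained at any large stage with no $\varprojlim^1$ term, and $\dim\HP_i=c_j$ for every large $j\equiv i \pmod 2$.

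For the Euler characteristic, apply Lemma~\ref{lem:book} with an even $M\geq N$ large enough. Then $\beta_M=\rk(B\colon\HC_M\to\HH_{M+1})=0$ because $\HH_{M+1}=0$. Since $S\colon\HC_{M+2}\to\HC_M$ is an isomorphism, $s_{M+2}=c_M$. Likewise $s_{M+1}=c_{M-1}$, provided $M+1\geq N+2$, which holds after enlarging $M$. This gives
\[
\chi_M=s_{M+2}-s_{M+1}=c_M-c_{M-1}=\dim\HP_0-\dim\HP_1 .
\]
Finally $\chi_M=\chi(\HH_*)$, because $h_j=0$ for $j>N$.

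\emph{Part (2).} Assume $\operatorname{char}k=0$. The radical $\rad$ is a nilpotent ideal, so Goodwillie's theorem says the projection $\La\to E=\La/\rad$ induces an isomorphism on periodic cyclic homology. This is the main obstacle: I must make sure Goodwillie's $\HP$ agrees with the $\varprojlim$ of $(\HC_{i+2m},S)$ used here. A priori Goodwillie's statement is about the $\prod$-totalization, and the two differ by a $\varprojlim^1$ term. That term vanishes for $\La$ because its tower stabilizes, as shown in part (1). It also vanishes for $E$, which is separable and hence has $\HH_n(E)=0$ for $n\geq1$, so part (1) applies to $E$ with $N=0$. I would cite \cite{Loday} (Goodwillie's theorem and the Milnor sequence) for both points.

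For separable $E$, part (1) with $N=0$ gives $\HC_{2m}(E)=\HH_0(E)=E/[E,E]$ and $\HC_{2m+1}(E)=0$. Hence $\dim\HP_0(E)-\dim\HP_1(E)=\dim_k E/[E,E]$. In the elementary case $E=k^v$, so the value is $v$. If $E$ is split, then $E\cong\prod_i M_{n_i}(k)$, and $E/[E,E]\cong k^{\#\{i\}}$ by Morita invariance of $\HH_0$; this counts the simple modules.
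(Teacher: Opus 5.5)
Your proof is correct and follows the paper's argument: the Connes sequence and the bookkeeping lemma give part (1), and Goodwillie's theorem plus the separable computation give part (2). Your explicit treatment of the $\varprojlim^1$ term is extra care the paper only gestures at. The one small elision is $\HC_1(E)=0$, which follows from $\HH_1(E)\to\HC_1(E)\to\HC_{-1}(E)=0$; the identity $\dim\HP_0(E)-\dim\HP_1(E)=\dim\HH_0(E)$ also follows directly from the $\chi$-formula of part (1) applied to $E$ with $N=0$.
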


\begin{proof}
(1) For $n>N$ we have $h_n=0$, hence $i_n=0$ and, by the second relation of \eqref{eq:threerel}, $S_n$ is injective with $s_n=c_n$. For $n\geq N+2$ also $h_{n-1}=0$, hence $\beta_{n-2}=0$ and, by the third relation, $s_n=c_{n-2}$; so $S_n$ is bijective and $c_n=c_{n-2}$. The towers $(\HC_{i+2m},S)$ are therefore eventually constant, with $\HP_i(\La)\cong\HC_{i+2m}(\La)$ for $i+2m\geq N$.

Now take $M\geq N+1$ in Lemma~\ref{lem:book}. Then $\beta_M=0$ (the target $\HH_{M+1}$ vanishes), and since $M+1\geq N+2$ we may substitute $s_{M+2}=c_M$ and $s_{M+1}=c_{M-1}$, obtaining $\chi_M=(-1)^M(c_M-c_{M-1})$. As $h_j=0$ for $j>N$, $\chi_M=\chi_N=\chi(\HH_*)$ for all such $M$; choosing $M$ even and large gives $\chi(\HH_*)=c_M-c_{M-1}=\dim\HP_0-\dim\HP_1$ (an odd $M$ gives the same value).

(2) By Goodwillie's theorem, periodic cyclic homology in characteristic zero is invariant under nilpotent extensions \cite{Goodwillie}, \cite[Thm.~4.1.15]{Loday}; applied to the nilpotent ideal $\rad$, the projection $\La\to E$ induces $\HP_*(\La)\cong\HP_*(E)$. In characteristic zero $k$ is perfect, so $E$ is separable; hence $E$ is projective as an $E$-bimodule and $\HH_n(E)=0$ for $n\geq1$, while $\HH_0(E)=E/[E,E]$. Applying part (1) to $E$ with $N=0$: $S$ is an isomorphism on $\HC_n(E)$ for $n\geq2$; moreover $\HC_0(E)=\HH_0(E)$ always, and exactness of $\HH_1(E)\to\HC_1(E)\xrightarrow{S}\HC_{-1}=0$ with $\HH_1(E)=0$ gives $\HC_1(E)=0$. Hence $\HP_0(E)=E/[E,E]$ and $\HP_1(E)=0$. For elementary $\La$, $E\cong k^v$; for split semisimple $E\cong\prod_i M_{n_i}(k)$, the trace maps identify $E/[E,E]\cong k^{\#\{\text{blocks}\}}$.
\end{proof}

\begin{remark}
Part (1) uses no characteristic assumption; in characteristic $p$ the conclusion is $\chi(\HH_*)=\dim\HP_0(\La)-\dim\HP_1(\La)$, with $\HP_*(\La)$ no longer computable from $E$ by Goodwillie's theorem. Part (2) recovers, for $\gldim\La<\infty$, the identity $\dim\HH_0=v$ of Corollary~\ref{cor:bn}, which, however, held over any field. The genuinely new reach of the theorem is the extension from finite global dimension to finite Hochschild homological dimension.
\end{remark}

\begin{lemma}[Structure of $\HH_0$ of an elementary algebra]\label{lem:HH0}
Let $\La=kQ/I$. Call an element of $y\La x$ with $x\neq y$ \emph{open}, and an element of $x\rad x$ \emph{closed radical}.
\begin{enumerate}
\item Every open element lies in $[\La,\La]$: for $p\in y\La x$ with $x\neq y$, $p=[e_y,p]$.
\item For each vertex $x$, let $\lambda_x\colon\La\to k$ send $a$ to the coefficient of $e_x$ in the $x\La x$-component of $a$, with respect to the decomposition $x\La x=ke_x\oplus x\rad x$. Then $\lambda_x$ vanishes on $[\La,\La]$; consequently the classes $[e_x]\in\HH_0(\La)$ are linearly independent and $\dim\HH_0(\La)\geq v$.
\item $\dim\HH_0(\La)=v$ if and only if every closed radical element, equivalently, the class of every closed path of positive length, lies in $[\La,\La]$.
\end{enumerate}
\end{lemma}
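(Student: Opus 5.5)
The proof works entirely with the Peirce decomposition $\La=\bigoplus_{x,y}y\La x$ and the fact that $[\La,\La]$ is spanned by commutators $[a,b]$. For each $x$ there is a further splitting $x\La x=ke_x\oplus x\rad x$, valid because $I$ is admissible, so that $x\rad x$ is spanned by classes of closed paths of positive length at $x$.

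For (1), take $p\in y\La x$ with $x\neq y$. Then $e_yp=p$ and $pe_y=pe_xe_y=0$, so $[e_y,p]=p$. This is a one-line check.

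For (2), the goal is to show $\lambda_x([a,b])=0$. By bilinearity it suffices to take $a\in y\La z$ and $b\in z'\La w$ in single Peirce components.
\begin{itemize}
\item If $ab$ has a nonzero $x\La x$-component, then $z=z'$ and $y=w=x$, so $a\in x\La z$ and $b\in z\La x$.
\item If $ba$ has a nonzero $x\La x$-component, then $y=w$ and $x=z=z'$, so $a\in y\La x$ and $b\in x\La y$.
\end{itemize}
Consider first the case $z\neq x$ in the first situation. Then $a$ and $b$ lie in the radical (arrows, or paths of positive length), so $ab$ is a combination of paths of length $\geq2$ and lies in $x\rad x$, giving $\lambda_x(ab)=0$. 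The term $ba$ lies in $z\La z$ with $z\neq x$, so it has no $x\La x$-component at all. The second situation with $y\neq x$ is symmetric.

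If instead all the vertices equal $x$, then $a,b\in x\La x$. Write $a=\alpha e_x+a'$ and $b=\beta e_x+b'$ with $a',b'\in x\rad x$. Then $[a,b]=[a',b']\in x\rad x$, because $x\rad x$ is an ideal of $x\La x$, so again $\lambda_x$ vanishes.

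Since $\lambda_x(e_{x'})=\delta_{xx'}$, the functionals $\lambda_x$ descend to $\HH_0(\La)=\La/[\La,\La]$ and separate the classes $[e_x]$. This gives linear independence and $\dim\HH_0(\La)\geq v$.

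For (3), by (1) the space $\HH_0(\La)$ is spanned by the images of $ke_x$ and of $x\rad x$ over all $x$. The latter is spanned by closed paths of positive length. Hence $\dim\HH_0=v$ exactly when the images of the $x\rad x$ are contained in the span of the $[e_x]$.

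Suppose the image of some $c\in x\rad x$ equals $\sum\mu_{x'}[e_{x'}]$. Applying every $\lambda_{x'}$, and using $\lambda_{x'}(c)=0$, forces $\mu=0$, so $c\in[\La,\La]$. This proves the forward direction; the converse is immediate.

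The only step needing real care is the case analysis in (2). It relies on admissibility of $I$: the splitting $x\La x=ke_x\oplus x\rad x$ must be well defined, and a product of radical elements must stay in $\rad$, and the latter holds simply because $\rad$ is an ideal.
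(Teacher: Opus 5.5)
Your proof is correct and follows the paper's argument: the identity $p=[e_y,p]$ for (1), the functionals $\lambda_x$ vanishing on commutators for (2), and, for (3), spanning of $\HH_0$ by the $[e_x]$ and the closed radical classes together with separation by the $\lambda_x$. The only difference is cosmetic. The paper shows $\lambda_x(ab)=\lambda_x(a)\lambda_x(b)$ in one step from the $x\La x$-component $\sum_y(xay)(ybx)$ of $ab$, while you reach the same conclusion by a case split over homogeneous Peirce components.
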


\begin{proof}
(1) $e_yp=p$ and $pe_y=0$ since $x\neq y$.
(2) For $a,b\in\La$ the $x\La x$-component of $ab$ is $\sum_y(xay)(ybx)$. For $y\neq x$ both factors lie in $\rad$, so the product lies in $x\rad x$ and contributes $0$ to $\lambda_x$. For $y=x$, writing $xax=\lambda_x(a)e_x+\rho$, $xbx=\lambda_x(b)e_x+\rho'$ with $\rho,\rho'\in x\rad x$, the product is $\lambda_x(a)\lambda_x(b)e_x$ plus terms in $x\rad x$. Hence $\lambda_x(ab)=\lambda_x(a)\lambda_x(b)=\lambda_x(ba)$, so $\lambda_x([a,b])=0$. Since $\lambda_y(e_x)=\delta_{xy}$, the classes $[e_x]$ are independent in $\La/[\La,\La]$.
(3) $\La=\bigoplus_x ke_x\oplus\bigoplus_x x\rad x\oplus(\text{open part})$, and the open part lies in $[\La,\La]$ by part (1); so $\HH_0$ is spanned by the $[e_x]$ and the classes of closed radical elements. If all the latter vanish, $\dim\HH_0=v$ by part (2). Conversely, if $\dim\HH_0=v$, then for a closed radical $p$ we have $[p]=\sum_x\alpha_x[e_x]$; applying $\lambda_y$, which kills $[\La,\La]$ and $x\rad x$, gives $\alpha_y=0$ for all $y$, so $p\in[\La,\La]$.
\end{proof}

\begin{corollary}[Budget]\label{cor:budget}
Let $\operatorname{char}k=0$ and $\La=kQ/I$ with $\HH_n(\La)=0$ for all $n>N$. Then:
\begin{enumerate}
\item $\displaystyle\dim\HH_0(\La)-v=\sum_{i=1}^{N}(-1)^{i-1}\dim\HH_i(\La)$ \quad (the \emph{$\chi$-budget});
\item if $N=0$, i.e.\ $\HH_i(\La)=0$ for all $i\geq1$, then every closed path of positive length is a sum of commutators; for a local algebra this reads $\rad=[\La,\La]$;
\item in particular, a characteristic-zero counterexample to Conjecture~\ref{conj:han} must repay the excess $\dim\La/[\La,\La]-v$ exactly in its low-degree homology, and if it has no positive-degree homology at all it must sit on the ``Lenzing boundary'': the commutator condition that finite global dimension forces (cf.\ \cite{Lenzing} and Corollary~\ref{cor:bn}) holds while $\gldim=\infty$.
\end{enumerate}
\end{corollary}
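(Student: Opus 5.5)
The plan is to derive all three items directly from Theorem~\ref{thm:chi} (with $\operatorname{char}k=0$) and Lemma~\ref{lem:HH0}.

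\emph{Item (1).} Write $h_i=\dim\HH_i(\La)$. By Theorem~\ref{thm:chi}(2), $\La$ elementary and $\HH_n(\La)=0$ for $n>N$ give
\[
\sum_{i=0}^N(-1)^ih_i=v.
\]
Move $h_0$ to one side and everything else to the other:
\[
h_0-v=-\sum_{i=1}^N(-1)^ih_i=\sum_{i=1}^N(-1)^{i-1}h_i ,
\]
which is the claimed budget.

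\emph{Item (2).} If $N=0$, the right-hand side of (1) is an empty sum, so $\dim\HH_0(\La)=v$. Lemma~\ref{lem:HH0}(3) then says every closed path of positive length lies in $[\La,\La]$, which is the first claim. In the local case there is a single vertex, so the closed radical elements are exactly $\rad$, and we get $\rad\subseteq[\La,\La]$. For the reverse inclusion, apply $\lambda_x$ from Lemma~\ref{lem:HH0}(2): it vanishes on commutators, so $[\La,\La]\subseteq\ker\lambda_x$. Since $\La=ke_x\oplus\rad$ and $\lambda_x$ is the coefficient of $e_x$, we have $\ker\lambda_x=\rad$. Hence $\rad=[\La,\La]$.

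\emph{Item (3).} This is a reformulation. A counterexample has $\HH_n(\La)=0$ for $n\gg0$, so (1) applies with $\dim\HH_0(\La)=\dim\La/[\La,\La]$. The excess $\dim\La/[\La,\La]-v$ therefore equals the alternating sum of the positive-degree dimensions. If there is no positive-degree homology, (2) gives the commutator condition $\dim\HH_0(\La)=v$. By Corollary~\ref{cor:bn}, this is exactly the condition that finite global dimension forces, yet here $\gldim\La=\infty$.

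There is no real obstacle; the proof is bookkeeping once Theorem~\ref{thm:chi} is available. The only point needing care is the local identification $\ker\lambda_x=\rad$ used to upgrade the inclusion to an equality in (2).
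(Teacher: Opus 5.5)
Your proposal is correct and follows the paper's proof essentially step for step. Item (1) rearranges Theorem~\ref{thm:chi}(2); item (2) combines the empty-sum case with Lemma~\ref{lem:HH0}(3), and in the local case uses $[\La,\La]\subseteq\ker\lambda_x=\rad$ from Lemma~\ref{lem:HH0}(2) to get equality; item (3) is a restatement using Corollary~\ref{cor:bn}.
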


\begin{proof}
(1) is Theorem~\ref{thm:chi}(2) rearranged. (2) combines (1) with Lemma~\ref{lem:HH0}(3); for local $\La$ every radical element is closed and $[\La,\La]\subseteq\rad$ by Lemma~\ref{lem:HH0}(2), giving equality. (3) is a restatement.
\end{proof}

\begin{remark}[Necklaces, and the empty local Lenzing boundary]\label{rem:necklace}
The classes of Lemma~\ref{lem:HH0} admit the classical \emph{necklace} description: for $\La=kQ/I$ elementary, $\HH_0(\La)=\La/[\La,\La]$ has $k$-basis the images of the vertices $e_x$ together with the surviving positive-length necklaces, the cyclic-rotation classes of closed paths of $Q$, taken modulo $I$. Hence $\rad=[\La,\La]\iff\dim\HH_0(\La)=v\iff$ no positive-length necklace survives, and if every relation lies in $\rad^{t+1}$ then $Q$ has no oriented closed path of length $\leq t$ surviving in $\La$. One caution is worth recording: for a \emph{monomial} algebra ``no surviving necklace'' does \emph{not} force $Q$ acyclic, $k\mathbb Z_3/\rad^2$ is monomial with $\dim\HH_0=3=v$ (so $\rad=[\La,\La]$), every closed path having length $\geq3>1$, yet its quiver is a $3$-cycle and $\gldim=\infty$. On the \emph{local} locus the situation is sharper: there $[\La,\La]=[\rad,\rad]\subseteq\rad^2$ (scalars are central, so every commutator is a commutator of radical elements). Consequently the commutator condition $\rad=[\La,\La]$ of Corollary~\ref{cor:budget}(2) would force $\rad=\rad^2$, hence $\rad=0$ by Nakayama; a \emph{nontrivial} local algebra therefore never satisfies it, and the ``Lenzing boundary'' of Corollary~\ref{cor:budget}(3) is \emph{empty} on the local locus. A local characteristic-zero counterexample with no positive-degree Hochschild homology thus cannot exist, consistently with the local campaign being a pure Gap-B question (Section~\ref{sec:program}).
\end{remark}

\begin{remark}[Euler-level asymmetry and the Coxeter transformation]\label{rem:coxeter}
For $\gldim\La<\infty$, Corollary~\ref{cor:bn} shows that the homology Euler characteristic is the trivial trace $v$, while the cohomology Euler characteristic is Happel's Coxeter trace $\tr(C^{-1}C^{\mathsf T})$ \cite{Happel97}. Theorem~\ref{thm:chi} extends the homology statement from $\gldim<\infty$ to $\mathrm{hh.dim}<\infty$: \emph{at the level of Euler characteristics, Hochschild homology cannot distinguish a potential counterexample from a smooth algebra}. No $(\La,\La)$-coefficient cohomological analogue can hold: cyclic homology pairs with $H^*(\La,D\La)$, not with $\HH^*(\La)$, and already for the Kronecker algebra $\tr(C^{-1}C^{\mathsf T})=-2\neq2=v$. This Euler-level asymmetry is a shadow of the asymmetry between Han's conjecture and Happel's question. For symmetric algebras $D\La\cong\La$, so $\dim\HH^n=\dim\HH_n$ and the $\chi$-law does constrain cohomology. Since dimensions cannot separate smooth from non-smooth on the homology side, the remaining information sits at the operator level, the calculus structure on $(\HH^*,\HH_*)$, its derived invariance \cite{ArmentaKeller}, and the Coxeter transformation acting on it \cite{Armenta}.
\end{remark}

\begin{remark}[Novelty caveat]\label{rem:novelty-chi}
The proof of Theorem~\ref{thm:chi} assembles standard cyclic-homology facts; statements of this type may well be known. Closely related material appears in Igusa's work relating cyclic homology to the Cartan determinant \cite{Igusa92} and in the Hirzebruch--Riemann--Roch formalism for finite dimensional algebras \cite{HanHRR}. The closest statements that we have located are the graded results of \cite{Igusa92,BerghMadsen} and the Euler formalism of \cite{HanHRR}, neither of which contains Theorem~\ref{thm:chi} in the ungraded generality used here; Lemma~\ref{lem:HH0} is folklore.
\end{remark}

\section{Gap B and dimension invariants}\label{sec:gapB}

Recall from \cite[Thm.~2.12, Rem.~2.18]{CLMS} that, computing with the minimal bimodule resolution, $\HH^\tau_n(\La)=\ker\delta'_n$ is the space of cycles of the chain complex $\big(B_\bullet(\La),\delta'\big)$, while $\HH_n(\La)$ is cycles modulo boundaries. Write $Z_n:=\dim\HH^\tau_n(\La)$.

\begin{proposition}[Windowed identity]\label{prop:window}
For $n\geq1$,
\[
Z_n=(-1)^n\Big(\tr\big(E_nC\big)-\chi_{n-1}\Big)
=\tr S_{n+1}+(-1)^n\big(v-\chi_{n-1}\big).
\]
\end{proposition}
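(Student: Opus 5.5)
We argue by rank--nullity on the finite complex $(B_\bullet(\La),\delta')$, where $\delta'_n\colon B_n\to B_{n-1}$. Write $r_n:=\rk\delta'_n$, with $r_0=0$. Then $Z_n=\dim\ker\delta'_n=b_n-r_n$ and $h_n=\dim\HH_n(\La)=Z_n-r_{n+1}$. Rewriting these gives
\[
r_{n+1}=Z_n-h_n,\qquad Z_n=b_n-r_n=b_n-Z_{n-1}+h_{n-1}.
\]

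The first step is to iterate this recursion down to $n=0$. Since $r_0=0$, we have $Z_0=b_0$, and induction gives
\[
Z_n=\sum_{i=0}^n(-1)^{n-i}b_i-\sum_{i=0}^{n-1}(-1)^{n-1-i}h_i
=(-1)^n\Big(\sum_{i=0}^n(-1)^ib_i-\chi_{n-1}\Big).
\]
In the second sum the sign is $(-1)^{n-1-i}=-(-1)^{n}(-1)^i$, so it contributes $-(-1)^n\chi_{n-1}$ to $Z_n$. This is exactly the sign required in the statement; checking it is the only delicate point.

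Next, by \eqref{eq:Bn} we have $b_i=\tr(C^{(i)}C)$. Summing, $\sum_{i\le n}(-1)^ib_i=\tr(E_nC)$, which gives the first equality.

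For the second equality we use Lemma~\ref{lem:K0}, which gives $E_nC=\Id+(-1)^nS_{n+1}$ and hence $\tr(E_nC)=v+(-1)^n\tr S_{n+1}$. Substituting into the first equality, $Z_n=\tr S_{n+1}+(-1)^n(v-\chi_{n-1})$.

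Alternatively, Corollary~\ref{cor:bn} gives $b_i=\tr S_i+\tr S_{i+1}$. The alternating sum of these terms telescopes to $\tr S_0+(-1)^n\tr S_{n+1}=v+(-1)^n\tr S_{n+1}$, which is the same result. No step is a real obstacle beyond the sign bookkeeping in the recursion.
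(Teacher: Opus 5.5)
Your route is sound and differs from the paper's only in the first step. The paper quotes \cite[Thm.~4.5]{CLMS} (with coefficients $X=\La$) for the identity $Z_n=(-1)^n\big(\sum_{i\le n}(-1)^ib_i-\chi_{n-1}\big)$. You instead re-derive it by rank--nullity on $(B_\bullet(\La),\delta')$, using only that $\HH^\tau_n$ is the space of cycles and $\HH_n$ the homology. After that, the passage to $\tr(E_nC)$ via \eqref{eq:Bn} and to $\tr S_{n+1}$ via Lemma~\ref{lem:K0} (or the telescoping of Corollary~\ref{cor:bn}) is identical to the paper's. Your version is self-contained and shows the identity is pure linear algebra; the citation is shorter. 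One wording point: the complex is not finite in the case of interest, only finite-dimensional in each degree, which is all rank--nullity needs.

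However, the step you yourself flag as delicate is written incorrectly. Iterating $Z_n=b_n-Z_{n-1}+h_{n-1}$ from $Z_0=b_0$ gives $Z_1=b_1-b_0+h_0$, so the $h$-sum enters with a plus sign:
\[
Z_n=\sum_{i=0}^n(-1)^{n-i}b_i+\sum_{i=0}^{n-1}(-1)^{n-1-i}h_i .
\]
Your displayed middle expression has a minus sign in front of the $h$-sum; at $n=1$ it yields $b_1-b_0-h_0$, which is wrong. Your next sentence then says this minus-sign term contributes $-(-1)^n\chi_{n-1}$. With the minus in front it actually contributes $+(-1)^n\chi_{n-1}$. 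The two slips cancel, so your final formula is correct. With the corrected display, $\sum_{i=0}^{n-1}(-1)^{n-1-i}h_i=-(-1)^n\chi_{n-1}$ follows directly, and your chain of equalities becomes valid as written.
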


\begin{proof}
Theorem 4.5 of \cite{CLMS} with coefficients $X=\La$ states
\[
Z_n=(-1)^n\Big(\sum_{i=0}^{n-1}(-1)^{i+1}\dim\HH_i(\La)+\sum_{i=0}^{n}(-1)^i b_i\Big),
\]
and $\sum_{i\leq n}(-1)^ib_i=\tr(E_nC)$ by \eqref{eq:Bn}, while the first sum is $-\chi_{n-1}$. Lemma~\ref{lem:K0} gives $\tr(E_nC)=v+(-1)^n\tr S_{n+1}$.
\end{proof}

\begin{proposition}[Tail structure]\label{prop:tail}
Suppose $\HH_i(\La)=0$ for all $i>N$, and put $\chi:=\chi_N$. Then for all $n>N$:
\begin{enumerate}
\item $Z_n=\tr S_{n+1}+(-1)^n(v-\chi)$; in particular $\tr S_{n+1}\geq(-1)^{n+1}(v-\chi)$;
\item $Z_n+Z_{n+1}=b_{n+1}$, an identity which, given (1), is automatically consistent with Corollary~\ref{cor:bn} for \emph{every} value of $\chi$, the parity terms cancelling;
\item if moreover $\operatorname{char}k=0$, then $\chi=v$ by Theorem~\ref{thm:chi} and hence
\[
Z_n=\tr S_{n+1}\qquad\text{for all }n>N:
\]
on the tail, the $\tau$-Hochschild homology equals the syzygy trace.
\end{enumerate}
\end{proposition}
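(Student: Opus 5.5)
The plan is to derive all three items from the windowed identity (Proposition~\ref{prop:window}) and the observation that $\chi_{n-1}$ is constant on the tail. Fix $n>N$. Since $\HH_i(\La)=0$ for $i>N$, the partial Euler characteristic satisfies $\chi_{n-1}=\chi_N=\chi$ for every $n-1\geq N$, that is, for all $n>N$. Substituting this into Proposition~\ref{prop:window} gives directly
\[
Z_n=\tr S_{n+1}+(-1)^n(v-\chi),
\]
which is the identity in (1). For the inequality in (1), use $Z_n=\dim\HH^\tau_n(\La)\geq0$ and rearrange: $\tr S_{n+1}\geq -(-1)^n(v-\chi)=(-1)^{n+1}(v-\chi)$.

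For (2), apply (1) at $n$ and at $n+1$; both indices exceed $N$. The parity terms $(-1)^n(v-\chi)$ and $(-1)^{n+1}(v-\chi)$ cancel, which leaves $Z_n+Z_{n+1}=\tr S_{n+1}+\tr S_{n+2}$. By Corollary~\ref{cor:bn} this sum equals $b_{n+1}$. Because the $\chi$-dependence has cancelled, the identity holds for every value of $\chi$, and this is the consistency claim. As an independent check one can argue via exactness: $\HH_{n+1}=0$ means $Z_{n+1}$ equals the rank of the boundary map $\delta'_{n+2}$ into $B_{n+1}$, and rank–nullity gives $b_{n+1}=Z_{n+1}+\rk\delta'_{n+1}=Z_{n+1}+Z_n$. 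The second equality uses $\HH_n=0$, so that the image of $\delta'_{n+1}$ is all of the cycles $Z_n$.

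For (3), in characteristic zero Theorem~\ref{thm:chi}(2) applies, since its hypothesis is exactly $\HH_n=0$ for $n>N$. It gives $\chi=\dim E/[E,E]$, and this equals $v$ in the elementary setting with $E=k^v$. With $\chi=v$ the correction term in (1) vanishes, so $Z_n=\tr S_{n+1}$.

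None of these steps is a real obstacle. The only point needing care is the bookkeeping of the index range: $\chi_{n-1}$ must already have stabilized, which requires $n-1\geq N$, and this is precisely the condition $n>N$. A second point is that (3) needs $\La$ elementary, or more generally the count of simples, for $\chi=v$. This assumption is in force by the paper's conventions.
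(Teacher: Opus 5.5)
Your proposal is correct and follows the paper's proof. Item (1) comes from substituting the stabilized value $\chi_{n-1}=\chi$ into the windowed identity and using $Z_n\geq0$. Item (2) comes from rank--nullity with $\HH_n=0$ together with Corollary~\ref{cor:bn}: you lead with the parity cancellation and use exactness as the check, while the paper does the reverse. Item (3) is Theorem~\ref{thm:chi}(2).

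One small blemish: the clause in your exactness check about $\HH_{n+1}=0$ and $\delta'_{n+2}$ is superfluous, since rank--nullity for $\delta'_{n+1}$ needs only $\HH_n=0$.
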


\begin{proof}
(1) is Proposition~\ref{prop:window} with $\chi_{n-1}=\chi$ constant for $n>N$, plus $Z_n\geq0$. (2) $\HH_n=0$ means $\ker\delta'_n=\operatorname{im}\delta'_{n+1}$, and $\dim\operatorname{im}\delta'_{n+1}=b_{n+1}-Z_{n+1}$ by rank--nullity; consistency follows since $\tr S_{n+1}+\tr S_{n+2}=b_{n+1}$ (Corollary~\ref{cor:bn}). (3) is immediate.
\end{proof}

\begin{corollary}[No-go for dimension counts]\label{cor:nogo}
Fix the dimension data 
\[
\mathcal D(\La):=\big(C,\ (C^{(n)})_{n\geq0},\ (S_n)_{n\geq0},\ (\dim\HH_i)_{i\leq N}\big)
\]

of an elementary algebra, and suppose $\operatorname{char}k=0$. The complete list of constraints that the hypothesis ``$\HH_n(\La)=0$ for all $n>N$'' imposes on $\mathcal D(\La)$ is:
\begin{enumerate}
\item the $K_0$ identities of Lemma~\ref{lem:K0} (which hold for every algebra), and
\item the single scalar identity $\chi_N=v$ of Theorem~\ref{thm:chi},
\end{enumerate}
the tail values $Z_n=\tr S_{n+1}$ being then determined and automatically admissible ($0\leq Z_n\leq b_n$). Consequently, no invariant of $\La$ that factors through $\mathcal D(\La)$ (truncated Euler characteristics, Cartan or Coxeter traces, growth rates of Betti numbers, or any function of the dimension vectors of the minimal resolution) can prove Gap B or refute the existence of a counterexample. Gap B is a statement about the \emph{ranks} of the differentials $\delta'$, not about the dimensions of the chain spaces.
\end{corollary}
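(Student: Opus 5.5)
The plan is to read ``complete list'' in the only sense the dimension data can support. Every relation that the hypothesis $\HH_n(\La)=0$ for $n>N$ imposes on $\mathcal D(\La)$ comes from the chain complex $(B_\bullet(\La),\delta')$. I will show that, once (1) and (2) hold, the dimensions of this complex admit an abstract complex of vector spaces with these dimensions that is exact above $N$. So nothing beyond (1) and (2) can be extracted from dimensions alone.

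\textbf{Necessity.} Condition (1) is Lemma~\ref{lem:K0}, which holds for every algebra. Condition (2) is Theorem~\ref{thm:chi}(2) for elementary $\La$ in characteristic zero.

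\textbf{Reduction to a rank problem.} The hypothesis says exactly that, in degrees $n>N$, $\ker\delta'_n=\operatorname{im}\delta'_{n+1}$. Its dimensional shadow is the system
\[
Z_n+Z_{n+1}=b_{n+1},\qquad 0\leq Z_n\leq b_n \qquad (n>N),
\]
together with the windowed identity of Proposition~\ref{prop:window}, whose left-hand sides are again computed from $\mathcal D(\La)$ by \eqref{eq:Bn}. Using $\chi_{n-1}=\chi_N=v$ for $n>N$, Proposition~\ref{prop:tail}(3) gives $Z_n=\tr S_{n+1}$, so the tail values are determined by $\mathcal D(\La)$. Admissibility is then immediate from Corollary~\ref{cor:bn}: $b_n=\tr S_n+\tr S_{n+1}$ with nonnegative summands gives $0\leq\tr S_{n+1}\leq b_n$. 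The recursion $Z_n+Z_{n+1}=b_{n+1}$ holds identically by the same corollary (Proposition~\ref{prop:tail}(2)).

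\textbf{Completeness.} To show no further dimensional constraint exists, I will realize the data abstractly. Take vector spaces of dimensions $b_n$ and choose linear maps $\delta'_{n+1}$ of rank $b_{n+1}-Z_{n+1}$ whose images equal prescribed subspaces of dimension $Z_n$. This is possible precisely because $Z_n+Z_{n+1}=b_{n+1}$ and $0\le Z_n\le b_n$. In low degrees $n\le N$, I choose ranks realizing the given $\dim\HH_i$. Here I expect the main obstacle: checking that the low-degree ranks can be chosen consistently with both the prescribed $\dim\HH_i$ and the value $Z_{N+1}$ forced from above. This is a finite Euler-characteristic bookkeeping, and it should be exactly equivalent to the windowed identity at $n=N+1$, that is, to the single scalar condition $\chi_N=v$.

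\textbf{Consequence.} Any invariant factoring through $\mathcal D(\La)$ takes the same values on this abstract model as on some hypothetical counterexample's data. Therefore it cannot distinguish finite from infinite global dimension, which gives the final sentence of the corollary.
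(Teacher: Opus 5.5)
Your necessity step and tail analysis match the paper's proof. The paper likewise obtains $Z_n=\tr S_{n+1}$ from Propositions~\ref{prop:window} and~\ref{prop:tail}, and reduces $0\le Z_n\le b_n$, via $b_n=\tr S_n+\tr S_{n+1}$, to $\tr S_n\ge 0$.

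The two arguments differ on completeness. The paper only remarks that every other relation it used is an instance of Lemma~\ref{lem:K0}, and that algebras sharing $\mathcal D$-data are indistinguishable by $\mathcal D$-invariants. You instead build a complex of vector spaces with $\dim B_n=b_n$ and the prescribed low-degree homology, exact above $N$. That is a sharper formalization: any consequence of rank--nullity and nonnegativity on $B_\bullet$ holds in your model, so no such consequence can contradict the vanishing hypothesis. It also makes explicit that completeness is proved only in this sense. Note that (2) does not come from $B_\bullet$ at all but from Goodwillie's theorem, so your opening claim that every relation comes from $(B_\bullet,\delta')$ should read ``every relation beyond (2)''.

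Two things need fixing.

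\textbf{Low degrees.} In degrees $\le N$ nothing is chosen. Since $\delta'_0=0$, the ranks are forced by $r_{n+1}=b_n-h_n-r_n$. Their nonnegativity is not a consequence of (1) and (2); it holds because $\mathcal D$ is the data of an actual algebra. The clean construction keeps the algebra's own $\delta'_1,\dots,\delta'_{N+1}$ and splices an exact tail onto it. By Proposition~\ref{prop:window} at $n=N+1$, the kernel of $\delta'_{N+1}$ has dimension $\tr S_{N+2}+(-1)^{N+1}(v-\chi_N)$. So matching your tail value $\tr S_{N+2}$ is exactly the condition $\chi_N=v$, as you expected, and the rest of the tail then follows automatically.

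\textbf{Final sentence.} Your last sentence is false as written. $\mathcal D$ contains the $S_n$, so it does detect finite versus infinite global dimension, and even infinite $+$ global dimension by Theorem~\ref{thm:criterion}. What your model shows is narrower: for the data of an infinite $+$ algebra satisfying (2), no dimensional argument can rule out $\HH_n=0$ for $n>N$. That is the actual content of ``cannot prove Gap B or refute the existence of a counterexample''.
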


\begin{proof}
By Propositions~\ref{prop:window} and~\ref{prop:tail}, the hypothesis determines $Z_n$ on the tail as stated, and the admissibility inequalities $0\leq Z_n\leq b_n$ reduce, via $b_n=\tr S_n+\tr S_{n+1}$, to $\tr S_n\geq(-1)^n(v-\chi)$, which is $0\leq\tr S_n$ when $\chi=v$. All remaining relations among the entries of $\mathcal D(\La)$ used above are instances of Lemma~\ref{lem:K0}. In particular, any two algebras with the same $\mathcal D$-data, for instance an algebra and a degeneration with larger homology, are indistinguishable by such invariants, while only one of them may satisfy the vanishing hypothesis.
\end{proof}

\begin{example}[The cohomological mirror]\label{ex:mirror}
The failure mode that Gap B must exclude is realized on the cohomology side. For $\La_q=k\langle x,y\rangle/(x^2,\,yx+qxy,\,y^2)$ with $q$ not a root of unity, the cochain spaces of the minimal resolution have dimensions $a_n=4(n+1)$, unbounded, yet $\HH^n(\La_q)=0$ for all $n\geq3$ \cite{BGMS}: the coboundaries absorb chain spaces of linearly growing dimension forever, while the $\tau$-cohomology grows as $2n+3$ \cite[Ex.~6.2]{CLMS}. On the homology side the same algebras are safe ($\HH_*(\La_q)$ is infinite \cite{BerghErdmann}) so a counterexample, if it exists, must be deformed away from the quantum complete intersection locus.
\end{example}

\begin{remark}[The DG witness]\label{rem:dg}
Liu and Shen \cite{LiuShen} proved that the DG version of Han's conjecture fails: there is a finite dimensional DG algebra with $\HH_n\neq0$ for only finitely many $n$ which is not smooth. In the DG setting, dimension vectors carry homological signs, so every nonnegativity used in this paper (Corollary~\ref{cor:bn}, Theorem~\ref{thm:criterion}, Proposition~\ref{prop:tail}) disappears. Consistently with Corollary~\ref{cor:nogo}, this locates the classical conjecture's remaining defense exactly at positivity of dimensions plus rank structure; understanding the obstruction to strictifying the Liu--Shen construction is one of the further directions collected in Section~\ref{sec:program}.
\end{remark}

\section{The program}\label{sec:program}

\subsection{The frontier after this paper}
\begin{itemize}
    \item \emph{Gap A.} By Theorem~\ref{thm:scc}, a minimal Gap-A failure has strongly connected quiver; by Corollary~\ref{cor:omnibus} and \cite[\S6]{CLMS} it has at least three vertices, some intra-component Peirce component zero (all connecting paths killed by $I$), no finitely generated infinite dimensional subcategory of its Yoneda category, and it is not $n$-Koszul; if it contains a loop it violates the extension conjecture.
    \item \emph{Gap B.} Open already for local algebras, where Gap A is automatic (Corollary~\ref{cor:omnibus}(1)): there the entire Conjecture~\ref{conj:han} is the survival statement, with the commutative \cite{AVP}, graded local (characteristic zero) \cite{BerghMadsen}, quantum complete intersection \cite{BerghErdmann} and monomial \cite{Han06} loci excluded. By Proposition~\ref{prop:tail}(3), the sharpest open question is whether, for a local algebra in characteristic zero, the cycle counts $Z_n=\tr S_{n+1}$ can be exact boundary counts for all large $n$.
\end{itemize}

\subsection{Search strata}
Fix an ambiguity skeleton: a quiver $Q$ and a reduction system with fixed leading terms in the sense of Chouhy--Solotar \cite{ChouhySolotar}, with monomial fiber $\La_\Sigma$ and parameter space $\mathcal P$ of tail coefficients satisfying the diamond condition. Along $\mathcal P$ the monomial basis, hence the Cartan matrix $C$ and $\det C$, and the chain ranks of the Chouhy--Solotar resolution (the Bardzell ambiguity counts of $\La_\Sigma$ \cite{Bardzell}) are constant, while $\dim\HH_n(\La_p)\leq\dim\HH_n(\La_\Sigma)$: deformation only destroys homology. The counterexample locus splits into the strata predicted by \cite{CLMS}:
\[
R_A=\{p:\gldim\La_p=\infty,\ \text{not infinite}+\},
\qquad
R_B=\{p:\ \text{infinite}+,\ \HH_n(\La_p)=0\ \forall n>N(p)\}.
\]
Any point of $R_A$ is already a counterexample; after Theorem~\ref{thm:scc}, only strongly connected skeletons need be scanned for it. The stratum $R_B$ hosts the \emph{local campaign} ($v=1$, Gap A free); it is a countable intersection of Zariski-open rank conditions, hence generic or empty, the precise form of the working hypothesis that a counterexample, if it exists, lives at parameters too generic for hand computation, and precisely what finite computation cannot certify without additional structure.

\subsection{Certification protocol}
\begin{itemize}
    \item (i) \emph{Prefilters}, before any differential is assembled: the $\chi$-budget of Corollary~\ref{cor:budget} (compute $\dim\La/[\La,\La]-v$); the profile $b_n=\tr S_n+\tr S_{n+1}$ from syzygy data; $\det C$ once per family.
    \item (ii) \emph{No dimension-scan certificates}: by Corollary~\ref{cor:nogo}, a window $\HH_n=0$ for $N<n\leq M$ certifies nothing (ranks can revive at $M+1$) and no amount of dimension data can close the question.
    \item (iii) \emph{Transfer periodicity}: for reduction systems whose ambiguity graph is eventually periodic (a subshift-of-finite-type condition on the skeleton; Bardzell chains are paths in the overlap graph), the differentials of the Chouhy--Solotar resolution are built from finitely many repeating parameter blocks, and eventual exactness beyond degree $N$ becomes finitely many rank conditions over the function field of $\mathcal P$, decidable. A counterexample certificate then consists of the period data, the exact block ranks over the function field, and a certificate of infinite global dimension, e.g.\ a periodic nonzero Tor pattern via the formulas of Bongartz recalled in \cite[(4.15)]{CLMS}.
\end{itemize}

\subsection{Possible directions}
Two directions single themselves out as the most load-bearing. The first is Gap A itself, and its combinatorial heart is the tiling mechanism. In dynamical form one must show, for the nonnegative matrix system $S_n+S_{n+1}=C^{(n)}C$ of Lemma~\ref{lem:K0} and under the strongly-connected constraints of \S7.1, that $S_n\neq0$ for all $n$ forces $\tr S_n\neq0$ infinitely often; combinatorially this is the assertion that consecutive syzygy supports advance and tile the cycle, recorded as Conjecture~\ref{conj:tiling}, of which Proposition~\ref{prop:nakayama} is the serial instance. The residual $v=3$ case is the mutual dumbbell, whose elimination is reduced (Remark~\ref{rem:dumbbellstatus}) to the infinitude and higher-differential cancellation of its mixed-necklace homology, equivalently, to extending the Cibils--Lanzilotta--Marcos--Solotar transfer past the unbounded square-zero extension, the unboundedness being exactly the non-projectivity of the linking bimodules of Corollary~\ref{cor:nonproj}.

The second is the characteristic-$p$ flank. Over a field of positive characteristic Goodwillie's theorem fails, so there is no $\chi$-law substitute controlling $\dim\HP_0-\dim\HP_1$, and the graded Tate-residue criterion remains the only characteristic-$p$-valid instrument; understanding what it can certify about a vanishing tail is the open problem there. Further directions are the survival statement of Gap B, Cartan nondegeneracy, $\sigma$-equivariance along the $\tau$-tower, and the transfer certification of \S7.3.

\subsection{Updates}
Several of the problems above have since been answered, reduced, or partially resolved; we record the current status.

\begin{remark}[Loops in large components: answered]\label{prob:loop}
The question of whether a loop inside a \emph{nontrivial} strongly connected component forces infinite global dimension is answered by the strong no-loop theorem of Igusa, Liu and Paquette \cite{ILP} (with earlier partial results of Igusa \cite{Igusa90} via the Hattori--Stallings trace): for an elementary algebra, e.g.\ any algebra over an algebraically closed field, the standing hypothesis here, a loop at $x$ (equivalently $\Ext^1(S_x,S_x)\neq0$) forces $\pd S_x=\infty$, hence $\gldim\La=\infty$, with \emph{no} strong-connectivity hypothesis. The strongly-connected qualifier is therefore unnecessary. The finer $\tau$-level strengthening, that such a loop forces infinite $+$ global dimension, is the content of the protected-corner Theorem~\ref{thm:protectedcorner} under the protection hypothesis, and of the extension conjecture in general.
\end{remark}

\begin{remark}[Gap A: partial answers]\label{rem:gapAupdate}
We record here three partial results on Gap A, equivalently the tiling Conjecture~\ref{conj:tiling}.
\begin{itemize}
    \item \emph{(a) Full-support theorem.} If every Peirce block of $\La$ is nonzero ($C_{yx}\geq1$ for all $x,y$; in particular $Q$ strongly connected) and $\gldim\La=\infty$, then $\tr S_n\neq0$ for infinitely many $n$, so $\La$ is of infinite $+$ global dimension and Han's conjecture holds for it. Consequently every Gap-A failure must have \emph{Cartan dead zones}, ordered pairs $(x,y)$ with $x\La y=0$, and the two surviving $v=3$ configurations of Theorem~\ref{thm:trichotomy} (the single-infinite shape and the mutual dumbbell) are exactly the maximally sparse Cartan shapes: the dead zones are forced, not incidental.
    \item \emph{(b) No dimensional obstruction to the dumbbell.} By Corollary~\ref{cor:nogo}, the full system of linear and nonnegativity constraints on the dimension datum of a hypothetical dumbbell is satisfiable, so no purely numerical search over the matrix recursion can ever obstruct it; the obstruction, if any, is multiplicative, not dimensional.
    \item \emph{(c) Reduction to mixed necklaces.} The dumbbell is reduced to the infinitude and higher-differential cancellation of its mixed-necklace homology, i.e.\ to extending the Cibils--Lanzilotta--Marcos--Solotar transfer \cite{CLMSsplit,CLMSbounded} past the unbounded square-zero extension; the unboundedness is exactly the non-projectivity of the linking bimodules of Corollary~\ref{cor:nonproj}.
\end{itemize}

\end{remark}

\begin{remark}[Cartan nondegeneracy: partial answer]\label{rem:detCupdate}
The Cartan-nondegeneracy question, whether $\HH_n(\La)=0$ for $n\gg0$ forces $\det C\neq0$, is partially answered here: if some simple $S_u$ has finite projective dimension then $C^{\mathsf T}\chi^{(u)}=e_u$ with $\chi^{(u)}_z=\sum_i(-1)^i\dim\Ext^i(S_u,S_z)$, and enough such relations force $\det C\neq0$. For a strongly connected $v=3$ Gap-A failure the one-way law supplies a finite-projective-dimension vertex (the hub of Theorem~\ref{thm:trichotomy}(2); see Proposition~\ref{prop:cartanD}), so $\det C\neq0$ there. The unconditional statement ``vanishing tail $\Rightarrow\det C\neq0$'' remains open, and is expected to be false without a finite-pd input, since Happel's identity takes invertibility of $C$ as an input, not an output.
\end{remark}

\begin{remark}[Status of the remaining problems]\label{rem:roadmapupdate}
Of the further directions above: Gap B is, in the local characteristic-zero periodic case, reduced to a single higher-degree statement (no elementary loop-free periodic algebra has both $\rad=[\La,\La]$ and $\HH_{\geq1}=0$). In characteristic $p$ the obstruction to a $\chi$-law is located at the failure of Goodwillie invariance, the graded Tate-residue criterion being the only characteristic-$p$-valid instrument. The transfer-certification and strictification directions remain open.
\end{remark}

\section{The anatomy of a Gap-A failure}\label{sec:prob1}

Gap A is not solved in this section. By Theorem~\ref{thm:scc} it is equivalent to Gap A in full, the assertion that every elementary algebra of infinite global dimension has infinite $\tau$-Hochschild homology, for which \cite[\S6.5]{CLMS} state that no counterexample and no proof are known. In this section, we determine what can be proved about a hypothetical failure. Namely, we prove: (a) an exact combinatorial description of such a failure, see Propositions~\ref{prop:anatomy} and~\ref{prop:chirigid}; (b) new sufficient criteria strictly inside the strongly connected frontier, see Theorem~\ref{thm:cornerproj} and Proposition~\ref{prop:symmetric}; (c) the covering mechanism that a full proof appears to require, which we isolate in the Nakayama case, see Proposition~\ref{prop:nakayama} and Conjecture~\ref{conj:tiling}.

Throughout, write $T_n(x):=\{y\in Q_0: C^{(n)}_{xy}\neq0\}$ for the support of $\operatorname{top}\Omega^n(S_x)$, and
\[
Z(x):=\{y\in Q_0:\ x\La y=0\},
\]
the set of vertices with no surviving path to $x$. Note $x\notin Z(x)$, no in-neighbour of $x$ lies in $Z(x)$ (arrows survive in any admissible quotient), and more generally no vertex with a surviving path to $x$ lies in $Z(x)$.

\begin{proposition}[One-way law]\label{prop:anatomy}
An elementary algebra $\La$ fails to be of infinite $+$ global dimension if and only if there exists $N$ with
\[
T_n(x)\subseteq Z(x)\qquad\text{for all }x\in Q_0\text{ and all }n\geq N .
\]
Consequently a Gap-A failure ($\gldim=\infty$, not infinite $+$) satisfies, for all $n\geq N$ and all $x$: $\Ext^n(S_x,S_x)=0$; no top of $\Omega^n(S_x)$ sits at an in-neighbour of $x$; and no top sits at any vertex from which a nonzero path of $\La$ reaches $x$.
\end{proposition}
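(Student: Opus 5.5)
We work directly from the original definition of infinite $+$ global dimension in \cite[Def.~5.4]{CLMS}, with Tor rewritten as Ext via Definition~\ref{def:matrices}(ii). By definition, $\La$ is of infinite $+$ global dimension if and only if some pair $(y,x)$ with $y\La x\neq0$ has $\Tor_n^\La(k_x,{}_yk)\neq0$ for infinitely many $n$. Since $\dim\Tor_n(k_x,{}_yk)=C^{(n)}_{yx}$, this means $C^{(n)}_{yx}\neq0$ infinitely often, i.e.\ $x\in T_n(y)$ infinitely often. The side condition $y\La x\neq0$ says exactly $x\notin Z(y)$.

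So $\La$ fails to be infinite $+$ if and only if, for every pair $(y,x)$ with $x\notin Z(y)$, the set of $n$ with $x\in T_n(y)$ is finite. There are only $v^2$ pairs, so we may take $N$ to be the maximum of these finitely many finite bounds, plus one. For $n\geq N$ we then get $T_n(y)\cap(Q_0\setminus Z(y))=\emptyset$, that is, $T_n(y)\subseteq Z(y)$ for all $y$.

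Conversely, suppose $T_n(y)\subseteq Z(y)$ for all $y$ and all $n\geq N$. Then any pair with $y\La x\neq0$ can have nonzero Tor only for $n<N$, so $\La$ is not infinite $+$. Alternatively, Theorem~\ref{thm:criterion} could be used as a cross-check, but the direct definition suffices.

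The consequences follow from the remarks made right after the definition of $Z(x)$.
\begin{enumerate}
\item Since $x\notin Z(x)$, we get $x\notin T_n(x)$, i.e.\ $\Ext^n(S_x,S_x)=C^{(n)}_{xx}=0$ for $n\geq N$.
\item If $z$ has a nonzero path of $\La$ ending at $x$, then $x\La z\neq0$. Hence $z\notin Z(x)$, and therefore $z\notin T_n(x)$; in words, no summand of $\operatorname{top}\Omega^n(S_x)$ sits at $z$.
\item In-neighbours of $x$ are the special case of a single arrow, which survives because $I$ is admissible.
\end{enumerate}
The Gap-A-failure statement itself simply adds the hypothesis $\gldim=\infty$, which is not used in this argument.

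The only point requiring care is index bookkeeping: one has to match the convention $C_{xy}=\dim y\La x$ and $C^{(n)}_{xy}=\dim\Tor_n(k_y,{}_xk)$ against the pair convention of \cite[Def.~5.4]{CLMS}. The intended reading is that the vertex carrying the top and the vertex $x$ of $S_x$ form the pair $(x,y)$ with $x\La y\neq0$, consistent with the definition of $Z(x)$. If the CLMS convention turns out to be the transpose, the same argument applies to the transposed condition, and the stated form of the proposition indicates which reading is intended. No other obstacle is expected.
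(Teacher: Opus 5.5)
Your proof is correct, and it takes a slightly more direct route than the paper. The paper starts from Theorem~\ref{thm:criterion}: failure of infinite $+$ means $b_n=0$ for all $n\gg0$, and the one-way law is then read off the nonnegative sum $b_n=\sum_{x,y}C^{(n)}_{xy}C_{yx}$ coming from \eqref{eq:Bn}. You instead argue straight from the definition \cite[Def.~5.4]{CLMS}, and use the finiteness of the set of $v^2$ vertex pairs to obtain a uniform threshold $N$.

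The combinatorial core is the same in both: a finite family of nonnegative quantities vanishes eventually if and only if each one does. Your version is more self-contained, since it bypasses the equivalences $(1)\Leftrightarrow(2)\Leftrightarrow(3)$ of Theorem~\ref{thm:criterion}, which rest on \cite[Thm.~5.3, Thm.~5.5]{CLMS}. The paper's version instead presents the law as the termwise vanishing of the chain spaces $B_n(\La)$, which is the form exploited in the remark that follows the proposition.

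Your closing hedge about a possibly transposed convention is unnecessary. The convention is fixed by the paper's restatement of \cite[Def.~5.4]{CLMS} in the introduction and by \eqref{eq:Bn}. Your bookkeeping matches both exactly: $\dim\Tor_n(k_x,{}_yk)=C^{(n)}_{yx}$, and $y\La x\neq0$ if and only if $x\notin Z(y)$.
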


\begin{proof}
By Theorem~\ref{thm:criterion}, failure of infinite $+$ is equivalent to $b_n=0$ for all $n\geq$ some $N$. By \eqref{eq:Bn}, $b_n=\sum_{x,y}C^{(n)}_{xy}\,C_{yx}$ with $C_{yx}=\dim x\La y$, a sum of nonnegative terms; so $b_n=0$ iff for all $x,y$: $C^{(n)}_{xy}\neq0\Rightarrow x\La y=0$, i.e.\ $T_n(x)\subseteq Z(x)$ for all $x$.
\end{proof}

\begin{remark}
The law is self-propagating at the diagonal: if $T_n(x)\subseteq Z(x)$ then
$e_x\Omega^{n+1}(S_x)\subseteq\bigoplus_{y\in T_n(x)}(e_xP_y)^{\oplus}=\bigoplus_{y\in T_n(x)}(x\La y)^{\oplus}=0$,
recovering $(S_{n+1})_{xx}=0$; this is the module-level content of $b_n=\tr S_n+\tr S_{n+1}$.
\end{remark}

\begin{proposition}[$\chi$-rigidity of failures, any characteristic]\label{prop:chirigid}
Let $\La$ be a Gap-A failure. Then $\HH^\tau_n(\La)=0$ for $n\gg0$, hence $\HH_n(\La)=0$ for $n\gg0$, and over \emph{any} field
\[
\sum_{i}(-1)^i\dim_k\HH_i(\La)=v .
\]
In particular the characteristic-zero $\chi$-law (Theorem~\ref{thm:chi}) imposes no constraint whatsoever on Gap-A failures; its entire bite is on Gap-B counterexamples, where $\HH^\tau_*$ is infinite and the argument below does not apply.
\end{proposition}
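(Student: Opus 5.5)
The argument is bookkeeping on the finite complex $(B_\bullet(\La),\delta')$ computed from the minimal bimodule resolution. A Gap-A failure is, by definition, not of infinite $+$ global dimension. By Theorem~\ref{thm:criterion} ((1)$\Leftrightarrow$(3)), there is therefore an $N$ with $b_n=\dim B_n(\La)=0$ for all $n\geq N$. Since $\HH^\tau_n(\La)$ is the space of cycles in $B_n(\La)$ and $\HH_n(\La)$ is a quotient of it, both vanish for $n\geq N$. Equivalently, $(B_\bullet,\delta')$ is a bounded complex of finite-dimensional spaces concentrated in degrees $0,\dots,N-1$.

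For the Euler characteristic, I would use that a bounded complex of finite-dimensional spaces has the same Euler characteristic as its homology. This gives
\[
\sum_i(-1)^i\dim\HH_i(\La)=\sum_{n=0}^{N}(-1)^n b_n .
\]
I then evaluate the right-hand side with Corollary~\ref{cor:bn}, namely $b_n=\tr S_n+\tr S_{n+1}$. The alternating sum telescopes to
\[
\sum_{n=0}^{N}(-1)^n b_n=\tr S_0+(-1)^N\tr S_{N+1}=v+(-1)^N\tr S_{N+1}.
\]
Because $b_N=0$ and $\tr S_{N+1}$ is a nonnegative summand of $b_N$, we get $\tr S_{N+1}=0$, and the sum equals $v$. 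Equivalently, I could take $m=N$ in Lemma~\ref{lem:K0} and read off $\tr(E_NC)=v+(-1)^N\tr S_{N+1}=v$, using $\sum_{n\leq N}(-1)^nb_n=\tr(E_NC)$ from \eqref{eq:Bn}.

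No field hypothesis enters: Theorem~\ref{thm:criterion}, Lemma~\ref{lem:K0} and Corollary~\ref{cor:bn} are characteristic-free dimension identities. This is why the conclusion holds over any field, and in characteristic zero it agrees with Theorem~\ref{thm:chi}(2), which therefore adds no constraint on such failures.

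The only point needing care is the one flagged in the statement. The vanishing of $\HH_n$ comes from the vanishing of the \emph{chain spaces} $B_n$, not of homology. So I must make sure the identification of $\HH_*(\La)$ with the homology of $\La\otimes_{\La^e}P_\bullet$ for the minimal resolution is the one being used, which it is by \cite[Thm.~2.12, Rem.~2.18]{CLMS}. I would also note that even though $S_n\neq0$ for infinitely many $n$ (since $\gldim\La=\infty$), only the traces $\tr S_n$ enter. Beyond this, I expect no real obstacle.
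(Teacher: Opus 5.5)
Your proof is correct, but it takes a different route from the paper's.

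\textbf{The paper's route.} The paper never uses the vanishing of the chain spaces directly. It first gets $\HH^\tau_n=\HH_n=0$ for $n>N$ from Theorem~\ref{thm:criterion} and the surjections $\HH^\tau_n\twoheadrightarrow\HH_n$. It then invokes the windowed identity of Proposition~\ref{prop:window}, which is imported from Theorem~4.5 of Cibils--Lanzilotta--Marcos--Solotar. That identity reads $0=Z_n=\tr S_{n+1}+(-1)^n(v-\chi)$ for all $n>N$. The paper pins $\chi=v$ by a parity argument: nonnegativity of $\tr S_{n+1}$ at two consecutive values of $n$ gives both $(-1)^n(v-\chi)\leq0$ and $(-1)^{n+1}(v-\chi)\leq0$.

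\textbf{Your route.} You use criterion (3) to make $(B_\bullet,\delta')$ a bounded complex. You then rerun the computation of Corollary~\ref{cor:bn}, with the boundary term $\tr S_{N+1}$ killed by $b_N=0$ instead of by $S_{d+1}=0$.

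\textbf{What each buys.} Your version is more self-contained: it needs no external Euler-window formula and no sign trick. It also makes transparent that the statement is just the Euler characteristic of a finite complex. In particular, the conclusion $\sum(-1)^i\dim\HH_i=v$ of Corollary~\ref{cor:bn} needed only finiteness of $B_\bullet$, not finite global dimension. The paper's version starts from the vanishing of the cycle spaces $Z_n$ rather than the chain spaces; the two inputs are equivalent by the result behind Theorem~\ref{thm:criterion}. Its advantage is that it runs on the same identity as Proposition~\ref{prop:tail}, so the Gap-A case sits inside the paper's uniform tail analysis, where $\chi$ is constrained only by positivity.
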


\begin{proof}
Failure of infinite $+$ gives $\HH^\tau_*$ finite (Theorem~\ref{thm:criterion}), hence $\HH_*$ finite via the surjections $\HH^\tau_n\twoheadrightarrow\HH_n$ \cite[Lem.~2.6]{CLMS}. Choose $N$ with $\HH^\tau_n=\HH_n=0$ for $n>N$, and set $\chi:=\chi_N$. Proposition~\ref{prop:window} gives, for all $n>N$,
$0=Z_n=\tr S_{n+1}+(-1)^n(v-\chi)$.
Reading this at two consecutive values of $n$ yields $(-1)^n(v-\chi)\leq0$ and $(-1)^{n+1}(v-\chi)\leq0$, forcing $\chi=v$ (and then $\tr S_{n+1}=0$, as it must).
\end{proof}

\begin{theorem}[Corner-projectivity criterion]\label{thm:cornerproj}
Let $\La=kQ/I$ and let $x\in Q_0$ with local corner $\Gamma:=e_x\La e_x\neq k$. Suppose
\begin{itemize}
\item[(P$'$)] $e_x\La e_y$ is projective as a left $\Gamma$-module for every $y\in\bigcup_{n\geq0}T_n(x)$
\end{itemize}
(for instance if $e_x\La$ is projective over $\Gamma$). Then $e_x\Omega^n_\La(S_x)\neq0$ for \emph{every} $n\geq0$; in particular $\La$ is of infinite $+$ global dimension.
\end{theorem}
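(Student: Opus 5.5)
We apply the exact functor $e_x\cdot(-)\colon\La\text{-mod}\to\Gamma\text{-mod}$, $M\mapsto e_xM$, to the minimal projective resolution
\[
\cdots\to Q_1\to Q_0\to S_x\to0,\qquad Q_n=\bigoplus_y P_y^{\,C^{(n)}_{xy}} .
\]
Since $e_xP_y=e_x\La e_y$, the $n$-th term becomes $\bigoplus_{y\in T_n(x)}(e_x\La e_y)^{C^{(n)}_{xy}}$. By hypothesis (P$'$) every such summand is a projective $\Gamma$-module, because only vertices in $\bigcup_n T_n(x)$ occur. Exactness of $e_x\cdot(-)$ then shows that $e_xQ_\bullet\to e_xS_x=k$ is a $\Gamma$-projective resolution of the simple $\Gamma$-module $k$. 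Its kernels are the modules $e_x\Omega^n_\La(S_x)$.

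Next we compare this resolution with the minimal one. As in the proof of Lemma~\ref{lem:cut}(2), any projective resolution decomposes as the minimal resolution plus a split exact complex of projectives. Taking kernels gives
\[
e_x\,\Omega^n_\La(S_x)\;\cong\;\Omega^n_\Gamma(k)\oplus L_n,\qquad L_n\ \text{projective over }\Gamma .
\]
It therefore suffices to show $\Omega^n_\Gamma(k)\neq0$ for all $n$. Since $\Gamma$ is local and $\Gamma\neq k$, we have $\rad\Gamma\neq0$, and so $k$ is not projective. The case $n=0$ is immediate, since $e_xS_x=k\neq0$.

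For $n\geq1$ we show that $\pd_\Gamma k=\infty$. Suppose instead that $k$ had finite projective dimension over the local algebra $\Gamma$. Then every finitely generated $\Gamma$-module has finite projective dimension, because each has a finite filtration with factors $k$, so $\gldim\Gamma<\infty$. A local finite-dimensional algebra of finite global dimension is $k$: by the no-loop argument, or more simply since the Cartan matrix is the $1\times1$ matrix $(\dim\Gamma)$, Corollary~\ref{cor:bn} forces it into $\mathrm{GL}_1(\mathbb Z)$, so $\dim\Gamma=1$. This contradicts $\Gamma\neq k$. Hence $\Omega^n_\Gamma(k)\neq0$ for every $n$, and so $e_x\Omega^n_\La(S_x)\neq0$ for all $n\geq0$.

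The final claim, that $\La$ is of infinite $+$ global dimension, then follows from Theorem~\ref{thm:criterion}(5).

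The only step needing care is the splitting of a non-minimal projective resolution over $\Gamma$. This requires the modules $e_x\La e_y$ to be genuinely projective (hypothesis (P$'$)) and finitely generated over the local algebra $\Gamma$. Finite generation holds automatically because everything is finite dimensional, and Krull--Schmidt then gives the decomposition of complexes. The parenthetical case, where $e_x\La$ is projective over $\Gamma$, is covered because each $e_x\La e_y$ is a direct summand of $e_x\La$.
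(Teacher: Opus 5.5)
Your proposal is correct and follows the paper's proof essentially step for step: you apply the exact functor $e_x(-)$, use (P$'$) to get a $\Gamma$-projective resolution of $k$, split off the minimal resolution to obtain $e_x\Omega^n_\La(S_x)\cong\Omega^n_\Gamma(k)\oplus L_n$, and then show $\pd_\Gamma k=\infty$. The only difference is cosmetic: the paper reads $1=\big(\sum(-1)^ib_i\big)\dim\Gamma$ directly off a finite free resolution of $k$, whereas you pass through $\gldim\Gamma<\infty$ and Corollary~\ref{cor:bn}, which is the same Euler-characteristic count in matrix form.
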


\begin{proof}
The functor $F=e_x(-)\colon\La\text{-mod}\to\Gamma\text{-mod}$ is exact. One has $F(S_y)=0$ for $y\neq x$ and $F(S_x)=e_x(\La/\rad)e_x=\Gamma/\rad\Gamma=k$, using $\rad(e\La e)=e(\rad\La)e$. The projectives occurring in the minimal resolution $Q_\bullet\to S_x$ are exactly the $P_y$ with $y\in\bigcup_nT_n(x)$, and $F(P_y)=e_x\La e_y$ is $\Gamma$-projective by (P$'$). Hence $FQ_\bullet\to k$ is an exact complex of projective $\Gamma$-modules, i.e.\ a projective resolution of $k$; it therefore decomposes as the minimal resolution plus a split exact complex of projectives, so
\[
F\big(\Omega^n_\La(S_x)\big)\;\cong\;\Omega^n_\Gamma(k)\oplus L ,
\]
for a projective $\Gamma$-module $L$. It remains to see $\Omega^n_\Gamma(k)\neq0$ for all $n$, i.e.\ $\pd_\Gamma k=\infty$. If $\pd_\Gamma k=d<\infty$, the minimal resolution $0\to\Gamma^{b_d}\to\cdots\to\Gamma^{b_0}\to k\to0$ gives $1=\dim k=\big(\sum(-1)^ib_i\big)\dim\Gamma$, forcing $\dim\Gamma=1$, i.e.\ $\Gamma=k$, which is excluded. Thus $\dim e_x\Omega^n_\La(S_x)\geq\dim\Omega^n_\Gamma(k)\geq1$ for all $n$, and Theorem~\ref{thm:criterion}(5) concludes.
\end{proof}

\begin{corollary}\label{cor:cornerscc}
Gap A holds for $\La$ whenever every strongly connected component $\CC$ with $\gldim\La_\CC=\infty$ contains a vertex satisfying the hypotheses of Theorem~\ref{thm:cornerproj} inside $\La_\CC$. Dually (applying the theorem to $\La^{\mathrm{op}}$), if $\La e_x$ is projective as a right $\Gamma$-module and $\Gamma\neq k$, then $\La$ is of infinite co$+$ global dimension. Moreover, the loop question of Remark~\ref{prob:loop}, in its infinite-$+$ form, is thereby reduced to the vertices whose corner fails (P$'$).
\end{corollary}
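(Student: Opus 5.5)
The statement to prove is Corollary~\ref{cor:cornerscc}. I would prove its three assertions separately, each time reducing to results already established.

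\emph{First assertion.} I apply Theorem~\ref{thm:scc}(3), so it suffices to show that every SCC-corner $\La_\CC$ with $\gldim\La_\CC=\infty$ is of infinite $+$ global dimension. Fix such a $\CC$. Since $\La_\CC$ is elementary, being a bound quiver algebra by Lemma~\ref{lem:scc-corner}, the chosen vertex $x\in\CC$ satisfies the hypotheses of Theorem~\ref{thm:cornerproj} inside $\La_\CC$. That is, $e_x\La_\CC e_x\neq k$, and (P$'$) holds with the sets $T_n(x)$ computed in $\La_\CC$. Theorem~\ref{thm:cornerproj} then makes $\La_\CC$ infinite $+$. I will also record that $e_x\La_\CC e_x=e_x\La e_x$, because $e_x\varepsilon_\CC=e_x$. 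Consequently the corner condition $\Gamma\neq k$ can be checked in either algebra, while (P$'$) is meant relative to $\La_\CC$. No further compatibility is needed, since Theorem~\ref{thm:scc}(2) lifts infinite $+$ from $\La_\CC$ to $\La$.

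\emph{Dual assertion.} I apply Theorem~\ref{thm:cornerproj} to $\La^{\mathrm{op}}=kQ^{\mathrm{op}}/I^{\mathrm{op}}$ at the same vertex $x$. There are three identifications to make.
\begin{itemize}
\item The corner of $\La^{\mathrm{op}}$ at $x$ is $\Gamma^{\mathrm{op}}$, and $\Gamma^{\mathrm{op}}\neq k$ if and only if $\Gamma\neq k$.
\item The left $\Gamma^{\mathrm{op}}$-module $e_x\La^{\mathrm{op}}$ is the right $\Gamma$-module $\La e_x$. Its projectivity is exactly the assumed hypothesis, and it gives the parenthetical sufficient form of (P$'$) for every $y$ simultaneously.
\item Proposition~\ref{prop:op} converts "$\La^{\mathrm{op}}$ is infinite $+$" into "$\La$ is infinite co$+$".
\end{itemize}
The only point needing care is that (P$'$) follows from projectivity of the whole of $e_x\La^{\mathrm{op}}$. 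This holds because $e_x\La^{\mathrm{op}}=\bigoplus_y e_x\La^{\mathrm{op}}e_y$ is a decomposition as $\Gamma^{\mathrm{op}}$-modules, and a direct summand of a projective module is projective.

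\emph{Loop reduction.} By Remark~\ref{prob:loop}, the infinite-$+$ form of the loop question asks whether a loop at $x$ forces infinite $+$. A loop at $x$ gives $\Gamma\neq k$, since the loop is a nonzero element of $x\rad x$. Hence, whenever (P$'$) holds at $x$, Theorem~\ref{thm:cornerproj} answers the question positively, and only the vertices at which (P$'$) fails remain. This step is purely a restatement.

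None of these steps is a genuine obstacle. The subtlest part is the bookkeeping in the dual case: one must match left and right modules and the opposite corner correctly, and check that (P$'$) is inherited from projectivity of the full module via the direct-sum decomposition.
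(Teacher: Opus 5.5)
Your proposal is correct and takes the same approach as the paper. The paper's proof only says to combine Theorem~\ref{thm:cornerproj} with Theorem~\ref{thm:scc}, and with Proposition~\ref{prop:op} applied to $\La^{\mathrm{op}}$. Your extra bookkeeping makes explicit what the paper leaves implicit: identifying $e_x\La^{\mathrm{op}}$ with the right $\Gamma$-module $\La e_x$, and deducing (P$'$) from projectivity of the whole module via its Peirce decomposition.
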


\begin{proof}
Combine Theorems~\ref{thm:cornerproj} and~\ref{thm:scc}, and Proposition~\ref{prop:op}.
\end{proof}

\begin{example}\label{ex:freecycle}
Let $Q$ be the two-cycle $a\colon x\to y$, $b\colon y\to x$ and $I=\langle(ba)^m,(ab)^m\rangle$, $m\geq1$. Then $\Gamma=e_x\La e_x=k[c]/(c^m)$ with $c=[ba]$, and each Peirce space $e_x\La e_y$ is free of rank one over $\Gamma$ (left multiplication by $c$ raises path length by $2$ and is injective below the cut-off). Theorem~\ref{thm:cornerproj} applies with room to spare. This example is of course monomial and two-vertex, hence already covered twice over; its role is to show that (P$'$) is a \emph{filtered} condition (injectivity of cycle-multiplication below a cut-off) which is stable under deformations of the relations preserving the cycle filtration. For the search program this yields a new prefilter on strongly connected skeletons: test $c$-freeness of the corners before anything else.
\end{example}

\begin{proposition}[Nakayama tiling]\label{prop:nakayama}
Let $\La$ be a connected Nakayama algebra with cyclic quiver on $v$ vertices and $\gldim\La=\infty$, and let $S_x$ be a simple with $\pd S_x=\infty$. Then the supports of the syzygies of $S_x$ tile the cycle: writing $\Omega^n(S_x)$ as the uniserial module with top $S_{t_n}$ and length $\ell_n\geq1$, one has $t_{n+1}=t_n+\ell_n$ and
\[
\operatorname{supp}\Omega^n(S_x)=\{t_n,t_n+1,\dots,t_n+\ell_n-1\}\pmod v ,
\]
so consecutive supports are adjacent arcs and $\bigcup_{n=N}^{M}\operatorname{supp}\Omega^n(S_x)$ is the arc $[t_N,t_{M+1})$, of length at least $M-N+1$. Consequently every vertex, in particular $x$, lies in $\operatorname{supp}\Omega^n(S_x)$ for infinitely many $n$, and $\La$ is of infinite $+$ global dimension.
\end{proposition}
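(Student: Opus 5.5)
The plan is to work entirely inside the uniserial combinatorics of a Nakayama algebra with cyclic quiver. I orient the cycle so that the unique arrow out of vertex $i$ goes to $i+1$ (indices mod $v$); the opposite orientation only reverses all arcs. Then $P_i$ is uniserial with composition factors $S_i,S_{i+1},\dots,S_{i+c_i-1}$, where $c_i=\dim P_i\geq2$; the inequality holds because the quiver is a cycle and arrows survive in $\La$. Every indecomposable module is uniserial, determined by its top $t$ and length $\ell$, and its support is the arc $\{t,\dots,t+\ell-1\}$, counted with multiplicity if $\ell>v$. Since $\pd S_x=\infty$, every syzygy $\Omega^n(S_x)$ is nonzero. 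Moreover each syzygy of a uniserial module is again uniserial: it is a submodule of a uniserial projective, hence indecomposable or zero. So $\Omega^n(S_x)$ is uniserial with some top $t_n$ and length $\ell_n\geq1$.

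The first step is the recursion. The projective cover of the uniserial module $U=\Omega^n(S_x)$, with top $t_n$ and length $\ell_n$, is $P_{t_n}$. Since $U$ is a quotient of $P_{t_n}$, we have $\ell_n\le c_{t_n}$. The kernel is $\rad^{\ell_n}P_{t_n}$, which is uniserial with top $S_{t_n+\ell_n}$ and length $c_{t_n}-\ell_n$. This kernel is nonzero by the standing assumption, so $t_{n+1}=t_n+\ell_n$ and $\ell_{n+1}=c_{t_n}-\ell_n$. The support formula then holds by definition of the composition series of a uniserial module. Starting from $t_0=x$, $\ell_0=1$, consecutive supports are adjacent arcs: the arc of $\Omega^n$ ends at $t_n+\ell_n-1$ and the next one begins at $t_{n+1}=t_n+\ell_n$.

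The second step is the tiling. By induction, $\bigcup_{n=N}^M\operatorname{supp}\Omega^n(S_x)$ is the arc running from $t_N$ to $t_{M+1}-1$, read on the universal cover $\mathbb Z$ of the cycle. Its length is $\sum_{n=N}^M\ell_n\geq M-N+1$, since each $\ell_n\ge1$. Take $M-N+1\geq v$. The arc then has length at least $v$, so it covers every residue class mod $v$, and in particular $x$. Hence some $n\in[N,M]$ has $x\in\operatorname{supp}\Omega^n(S_x)$, that is, $e_x\Omega^n(S_x)\neq0$. Applying this to the disjoint windows $[N,N+v-1]$ for $N=0,v,2v,\dots$ gives infinitely many such $n$. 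Theorem~\ref{thm:criterion}(5) then yields infinite $+$ global dimension.

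The only real care point is the bookkeeping on the universal cover. When $\ell_n\ge v$, the support ``arc'' wraps around and the identification with a set of residues is many-to-one. I would phrase everything in terms of lifts to $\mathbb Z$ so that adjacency and the length count are literally additive; projecting to $\mathbb Z/v$ afterwards can only enlarge coverage. I do not expect any other obstacle. The hypothesis $\pd S_x=\infty$ enters only to guarantee $\ell_n\ge1$ for all $n$, which is exactly what drives the arcs forward.
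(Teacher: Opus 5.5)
Your proof is correct and follows essentially the same route as the paper: you identify $\Omega^{n+1}(S_x)=\rad^{\ell_n}P_{t_n}$ through the uniserial projective cover, use $\pd S_x=\infty$ only to guarantee $\ell_n\geq1$, and conclude from the advancing contiguous arc together with Theorem~\ref{thm:criterion}(5). Your recursion $\ell_{n+1}=c_{t_n}-\ell_n$ is the correct one; the paper's displayed $c_{t_{n+1}}$ is a slip, harmless because only $\ell_n\geq1$ is used. Your lift to $\mathbb Z$ also handles arcs of length at least $v$ cleanly.
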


\begin{proof}
For a uniserial $M=P_j/\rad^\ell P_j$ with $1\leq\ell<c_j$ (Kupisch length $c_j=\dim P_j$), the syzygy is $\Omega M=\rad^\ell P_j$, uniserial with top $S_{j+\ell}$ and length $c_j-\ell$; it vanishes iff $\ell=c_j$ and is projective iff $c_j-\ell=c_{j+\ell}$. Since $\pd S_x=\infty$, neither ever happens, so $\ell_n\geq1$ for all $n$ and the recursion $t_{n+1}=t_n+\ell_n$, $\ell_{n+1}=c_{t_{n+1}}-\ell_n$ holds from $t_0=x$, $\ell_0=1$. The support and tiling statements follow at once, and the union of consecutive supports is a contiguous arc growing by $\ell_n\geq1$ at each step, hence wrapping around $\mathbb Z/v$ infinitely often. Theorem~\ref{thm:criterion}(5) concludes.
\end{proof}

\begin{remark}
Every admissible ideal on an oriented cycle is monomial (in each Peirce space $e_ykQe_x=p\,k[c]$, with $p$ the shortest path and $c$ the full cycle, admissibility puts some $c^s$ in $I$, and B\'ezout in $k[c]$ then extracts the pure path from any element $p\,f(c)$ of $I$ with $f(0)\neq0$). Cyclic Nakayama algebras are therefore monomial, and Proposition~\ref{prop:nakayama} is a new \emph{proof}, not a new \emph{case}: the case is contained in Han's theorem \cite{Han06} together with \cite[Thm.~5.6]{CLMS}. Its value is the mechanism (consecutive syzygy supports \emph{advance and tile}) which is exactly what a proof of Gap A must produce in general, against the one-way law of Proposition~\ref{prop:anatomy}.
\end{remark}

\begin{proposition}[Symmetric algebras]\label{prop:symmetric}
Let $\La$ be a symmetric elementary algebra with symmetrizing form $\lambda$.
\begin{enumerate}
\item For all vertices $x,y$, the form $(a,b)\mapsto\lambda(ab)$ restricts to a perfect pairing $y\La x\times x\La y\to k$; in particular $y\La x\neq0\iff x\La y\neq0$, and $Z(x)=\{y: x\La y=0=y\La x\}$.
\item Consequently $\La$ is of infinite $+$ global dimension if and only if it is of infinite co$+$ global dimension, and the one-way law of a Gap-A failure is two-sided: for $n\geq N$, no vertex of $T_n(x)$ is connected to $x$ by a surviving path in either direction, although (by \cite[Lem.~4.14]{CLMS}) it is connected to $x$ by paths of $Q$ that all die in $I$.
\end{enumerate}
\end{proposition}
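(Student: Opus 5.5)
For part (1) I would use the defining property of a symmetrizing form: $\lambda\colon\La\to k$ is linear, satisfies $\lambda(ab)=\lambda(ba)$, and the bilinear form $(a,b)\mapsto\lambda(ab)$ on $\La$ is nondegenerate. Decompose $\La=\bigoplus_{x,y}y\La x$ into Peirce components. For $a\in y\La x$ and $b\in z\La w$ we have $ab=a e_x e_z b$, so $ab=0$ unless $x=z$. If $x=z$ but $w\neq y$, then $ab\in y\La w$ with $y\neq w$, and $\lambda(ab)=\lambda(e_y ab)=\lambda(ab\,e_y)=0$ because $ab\,e_y=0$.

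Hence the Gram form of $\lambda(ab)$ is block-permutation-shaped: it pairs $y\La x$ only with $x\La y$. Nondegeneracy of the whole form then forces each restricted pairing $y\La x\times x\La y\to k$ to be nondegenerate on both sides. Since everything is finite dimensional, each restriction is perfect, and $\dim y\La x=\dim x\La y$, i.e.\ $C=C^{\mathsf T}$. The description $Z(x)=\{y: x\La y=0=y\La x\}$ follows immediately.

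For part (2), the plan is to compare the $+$ and co$+$ conditions through the opposite algebra. By Proposition~\ref{prop:op}, infinite co$+$ for $\La$ is infinite $+$ for $\La^{\mathrm{op}}$. Concretely, by Definition~\ref{def:matrices}(ii) and \eqref{eq:Bn}, the $+$ condition says
\[
\sum_{x,y}C^{(n)}_{yx}\,C_{xy}\neq0\quad\text{infinitely often,}
\]
and the co$+$ condition is the analogous sum with $C^{\mathsf T}$ in place of $C$, namely $a_n=\tr(C^{(n)}C^{\mathsf T})$. Since $C=C^{\mathsf T}$ by part (1), we get $a_n=b_n$ for all $n$. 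Granting that co$+$ is detected by $a_n$ exactly as $+$ is detected by $b_n$ (the dual of \cite[Thm.~5.3, 5.5]{CLMS}, or Theorem~\ref{thm:criterion} applied to $\La^{\mathrm{op}}$), the two conditions coincide. Alternatively, one can avoid $a_n$: $\La^{\mathrm{op}}$ is again symmetric, and the support condition ``$y\La x\neq0$ and $\Tor$ nonzero'' is symmetric in the pair once the Peirce supports are symmetric. Since $\Tor_n(k_x,{}_yk)\cong\Tor^{\La^{\mathrm{op}}}_n(k_y,{}_xk)$, the defining pairs for $+$ and co$+$ correspond.

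The two-sided one-way law then follows from Proposition~\ref{prop:anatomy} together with the new description of $Z(x)$ from part (1).

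The last clause uses \cite[Lem.~4.14]{CLMS}. That lemma states that whenever $\Tor_n(k_y,{}_xk)\neq0$, i.e.\ $y\in T_n(x)$, there is a path in $Q$ joining $x$ and $y$ in the appropriate direction; the Bardzell/Green--Solberg--Zacharia chains give such a path. Combined with $y\in Z(x)$ and the symmetry of $Z(x)$, every such path dies in $I$.

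The main obstacle is this final bookkeeping step: checking which direction of path \cite[Lem.~4.14]{CLMS} actually provides. If it provides the other direction, symmetry of $Z(x)$ still gives the result. Part (1) itself is routine.
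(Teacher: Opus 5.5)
Your proposal is correct and follows the paper's proof. Part (1) is the same Peirce-orthogonality-plus-nondegeneracy argument. Part (2) rests, as in the paper, on the symmetry of the Peirce condition, which makes the $+$ and co$+$ conditions of \cite[Def.~5.4]{CLMS} coincide; your $a_n=b_n$ from $C=C^{\mathsf T}$ is that same observation in numerical form.

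One small slip: passing to $\La^{\mathrm{op}}$ transposes $C^{(n)}$ as well as $C$, so $b_n(\La^{\mathrm{op}})=\tr\big((C^{(n)})^{\mathsf T}C^{\mathsf T}\big)=b_n(\La)$, not $a_n$. The identification of co$+$ with ``$a_n\neq0$ infinitely often'' should therefore come from the cohomological counterpart of \cite[Thm.~5.3, 5.5]{CLMS}, not from the criterion applied to $\La^{\mathrm{op}}$. This does not affect your conclusion for symmetric $\La$.
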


\begin{proof}
(1) For $\xi\in u\La z$ with $z\neq u$ one has $\lambda(\xi)=\lambda(e_u\xi e_z)=\lambda(\xi e_ze_u)=0$ by symmetry of $\lambda$; hence the Peirce decomposition is orthogonal except for the pairings stated. If $a\in y\La x$ satisfies $\lambda(a\cdot x\La y)=0$, then for arbitrary $c\in\La$, $\lambda(ac)=\lambda(a\,e_xce_y)=0$, so $a$ lies in the radical of the form, which is zero. Nondegeneracy of the restricted pairing follows, and with it the equivalence of vanishing.
(2) Failure of infinite $+$ reads: for all $(y,x)$ with $y\La x\neq0$, $\Tor_*(k_x,{}_yk)$ is finite; by (1) the Peirce condition is symmetric in $(x,y)$, and comparing \cite[Def.~5.4]{CLMS} for $+$ and co$+$ gives the equivalence. The two-sided law is Proposition~\ref{prop:anatomy} plus (1).
\end{proof}

\begin{conjecture}[Tiling]\label{conj:tiling}
Let $\La=kQ/I$ with $Q$ strongly connected and let $S_x$ be simple with $\pd S_x=\infty$. Then $x\in\operatorname{supp}\Omega^n(S_x)$ for infinitely many $n$.
\end{conjecture}

By Theorems~\ref{thm:criterion} and~\ref{thm:scc}, Conjecture~\ref{conj:tiling} is equivalent to Gap A in full; Proposition~\ref{prop:nakayama} is its serial instance, and Proposition~\ref{prop:anatomy} is the exact statement of what a counterexample must do instead: the tops of all high syzygies of every simple must flow, forever, strictly into the region from which no path returns alive. A minimal counterexample is strongly connected, has at least three vertices and at least four arrows (three-vertex algebras whose quiver is a bare cycle are Nakayama, excluded above), some intra-component Peirce component annihilated by $I$ in both directions if the algebra is symmetric, no vertex with a (P$'$)-corner, no finitely generated infinite dimensional Yoneda subcategory, and is neither $n$-Koszul, monomial, gentle, special biserial nor commutative. The author does not know whether such an object exists; Gap A stands.

\section{Three vertices: concentration, tail factorization and the trichotomy}\label{sec:threevertex}

After the reductions of Sections~\ref{sec:gapA} and~\ref{sec:prob1}, the first open configuration for Gap A is a strongly connected quiver on three vertices. In this section, we determine the structure of a hypothetical failure on three vertices. We prove a concentration theorem, a factorization of the tail of the minimal resolution through a local corner, the protected corner theorem and the trichotomy announced in the introduction. We also record an explicit finite consistency problem, see Definition~\ref{temp:blueprint}: either it is unrealizable, and exhibiting the obstruction proves Gap A for $v=3$, or it is realizable, and any realization is a counterexample to Han's conjecture.

Throughout, $\La=kQ/I$ is elementary, $Q$ strongly connected, and $\La$ \emph{fails} infinite $+$ global dimension with threshold $N$ as in Proposition~\ref{prop:anatomy}; $x$ denotes a vertex with $\pd S_x=\infty$ (one exists since $\gldim\La=\sup_x\pd S_x$ over finitely many simples). An \emph{external in-neighbour} of $x$ is a vertex $u\neq x$ with an arrow $u\to x$.

\begin{theorem}[Concentration]\label{thm:concentration}
Let $|Q_0|=3$. Then:
\begin{enumerate}
\item $x$ has exactly one external in-neighbour $u$, and $Z(x)=\{w\}$ where $w$ is the third vertex; moreover $T_n(x)=\{w\}$ for all $n\geq N$.
\item $x\La w=0$, while paths $w\to x$ exist in $Q$ and all die in $I$; there is no arrow $w\to x$, there is an arrow $w\to u$, and $u\La w\neq0$.
\item $\operatorname{inj.dim} S_w=\infty$.
\item $\Gamma:=e_w\La e_w\neq k$; equivalently, some cycle of $Q$ at $w$ survives in $\La$.
\end{enumerate}
\end{theorem}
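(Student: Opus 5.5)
The plan is to derive every item from the one-way law (Proposition~\ref{prop:anatomy}) applied at the fixed vertex $x$, using only strong connectivity and the minimality of the resolution. The starting point is the following observation. For $n\geq N$ we have $T_n(x)\subseteq Z(x)$. Moreover $T_n(x)\neq\emptyset$ for every $n$, because $\pd S_x=\infty$ means $\Omega^n(S_x)\neq0$, and a nonzero module has a nonzero top. Hence $Z(x)\neq\emptyset$.

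For (1), recall from Section~\ref{sec:prob1} that $Z(x)$ contains neither $x$ nor any in-neighbour of $x$. Since $Q$ is strongly connected with three vertices, $x$ has at least one external in-neighbour $u$. If both other vertices were in-neighbours, $Z(x)$ would be empty, a contradiction. So there is exactly one external in-neighbour $u$, and $Z(x)\subseteq\{w\}$. Nonemptiness then gives $Z(x)=\{w\}$ and $T_n(x)=\{w\}$ for all $n\geq N$.

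For (2), the equality $x\La w=0$ is just $w\in Z(x)$. Strong connectivity gives a path $w\to x$ in $Q$, and every such path dies in $I$. There is no arrow $w\to x$, since $u$ is the only external in-neighbour of $x$. Now take any path from $w$ to $x$ and look at the first arrow that leaves $w$ (after possible loops at $w$). It cannot go to $x$, so it goes to $u$; hence an arrow $w\to u$ exists. Arrows survive in an admissible quotient, so $u\La w\neq0$.

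For (3), minimality gives $\dim\Ext^n_\La(S_x,S_w)=C^{(n)}_{xw}$, and this is nonzero for all $n\geq N$ because $T_n(x)=\{w\}$ is nonempty. Therefore $\Ext^n_\La(-,S_w)\neq0$ for infinitely many $n$, and $\operatorname{inj.dim}S_w=\infty$.

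For (4), which I expect to be the only step needing an actual argument, I would argue by contradiction. Suppose $e_w\La e_w=k$. For $n\geq N$, the projective cover of $\Omega^n(S_x)$ is $P_w^{c_n}$ with $c_n=C^{(n)}_{xw}$. Then $e_wP_w^{c_n}=k^{c_n}$ surjects onto $e_w\Omega^n(S_x)$, which in turn surjects onto $e_w\operatorname{top}\Omega^n(S_x)=k^{c_n}$. Comparing dimensions, $e_wP_w^{c_n}\to e_w\Omega^n(S_x)$ is an isomorphism. Exactness of $e_w(-)$ then gives $e_w\Omega^{n+1}(S_x)=0$. But $\Omega^{n+1}(S_x)\neq0$ has its top supported on $T_{n+1}(x)=\{w\}$, which forces $e_w\Omega^{n+1}(S_x)\neq0$, a contradiction. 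The stated equivalence is the observation that $e_w\La e_w$ is spanned by $e_w$ together with the classes of closed paths at $w$ of positive length, so $\Gamma\neq k$ exactly when one of these closed paths is nonzero in $\La$.
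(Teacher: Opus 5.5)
Your proposal is correct and follows the paper's proof of items (1)--(3) essentially line by line. For (4) you use a dimension count: applying $e_w$ to the cover $P_w^{c_n}\to\Omega^n(S_x)$ gives an isomorphism, hence $e_w\Omega^{n+1}(S_x)=0$. The paper instead notes that the minimal tail differentials between copies of $P_w$ are matrices with entries in $\rad\Gamma=0$. These are the same observation phrased two ways, and both arguments are complete.
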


\begin{proof}
(1) Always $W(x):=Q_0\setminus Z(x)$ contains $x$ and every vertex with a surviving path to $x$, in particular every external in-neighbour (arrows survive). Strong connectivity gives at least one external in-neighbour. Since $\pd S_x=\infty$, $T_n(x)\neq\emptyset$ for all $n$, and $T_n(x)\subseteq Z(x)$ for $n\geq N$ forces $Z(x)\neq\emptyset$, i.e.\ $|W(x)|\leq2$. Hence $W(x)=\{x,u\}$ with $u$ the unique external in-neighbour, $Z(x)=\{w\}$, and the nonempty sets $T_n(x)\subseteq\{w\}$ are $\{w\}$.

(2) $x\La w=0$ is the definition of $w\in Z(x)$; strong connectivity provides $Q$-paths $w\to x$, which therefore all lie in $I$. An arrow $w\to x$ would survive, contradicting $x\La w=0$. Every $Q$-path $w\to x$ must first leave $\{w\}$ towards $u$ or $x$; not $x$ directly, so there is an arrow $w\to u$, whence $u\La w\neq0$.

(3) $C^{(n)}_{xw}\neq0$ for all $n\geq N$, i.e.\ $\Ext^n(S_x,S_w)\neq0$; a finite injective dimension $d$ of $S_w$ would force vanishing for $n>d$.

(4) By (1), for $n\geq N$ the minimal resolution continues $\cdots\to P_w^{\beta_{n+1}}\xrightarrow{d_{n+1}}P_w^{\beta_n}\to\cdots$ with $\beta_n\geq1$. Each differential is right multiplication by a matrix over $\HomOp_\La(P_w,P_w)=e_w\La e_w=\Gamma$, and minimality ($\operatorname{im}d\subseteq\rad P_w^{\beta}$) forces the entries into $\rad\Gamma=e_w(\rad\La)e_w$. If $\Gamma=k$ then $\rad\Gamma=0$, all tail differentials vanish, and exactness of the tail forces $\beta_n=0$, contradicting $\pd S_x=\infty$.
\end{proof}

\begin{theorem}[Tail factorization and Tor-rigidity]\label{thm:tail}
In the situation of Theorem~\ref{thm:concentration} let $\Gamma=e_w\La e_w$ and let $F_\bullet$ denote the complex of free left $\Gamma$-modules $\Gamma^{\beta_n}$ ($n\geq N$) with the differentials of the tail. Then:
\begin{enumerate}
\item The tail of the minimal resolution of $S_x$ is $\La e_w\otimes_\Gamma F_\bullet$, and $F_\bullet$ is a minimal complex ($\operatorname{im}d\subseteq\rad\Gamma\cdot F$), exact in the tail range, whose kernels $K_n$ are nonzero and satisfy $K_n\subseteq\rad\Gamma\cdot F_n$.
\item For every vertex $z$, the complex $(z\La w)\otimes_\Gamma F_\bullet$ is exact in the tail range; equivalently $\operatorname{Tor}^\Gamma_i(z\La w,\,K_n)=0$ for all $i\geq1$ and all $n$ in the range, where $z\La w$ carries its right $\Gamma$-module structure.
\item No $K_n$ has a nonzero free direct summand: indeed every $v\in\rad\Gamma\cdot F_n$ is annihilated by the nonzero right socle $\{\gamma\in\Gamma:\gamma\,\rad\Gamma=0\}$.
\end{enumerate}
\end{theorem}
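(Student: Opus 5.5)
The plan is to work entirely with the tail of the minimal resolution $Q_\bullet\to S_x$ provided by Theorem~\ref{thm:concentration}. For $n\geq N$ we have $Q_n=P_w^{\beta_n}$. The $\La$-homomorphisms $P_w\to P_w$ are right multiplications by elements of $\Gamma=e_w\La e_w$, since $\HomOp_\La(\La e_w,\La e_w)\cong(e_w\La e_w)^{\mathrm{op}}$ acting on the right. Hence each tail differential is right multiplication by a $\beta_{n+1}\times\beta_n$ matrix $D_{n+1}$ with entries in $\Gamma$.

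\textbf{Part (1).} Define $F_n=\Gamma^{\beta_n}$ with the same matrices $D_n$. The canonical isomorphism $\La e_w\otimes_\Gamma\Gamma\cong\La e_w$ identifies $\La e_w\otimes_\Gamma F_\bullet$ with the tail, compatibly with the differentials. The argument in the proof of Theorem~\ref{thm:concentration}(4) shows that minimality forces the entries of $D_n$ into $\rad\Gamma$; thus $\operatorname{im}d\subseteq\rad\Gamma\cdot F$.

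Exactness of $F_\bullet$ comes from applying the exact functor $e_w(-)$ to the tail. This gives $e_w\La e_w\otimes_\Gamma F_\bullet=F_\bullet$, and exactness of the tail of $Q_\bullet$ in degrees $>N$ gives exactness of $F_\bullet$ there. Set $K_n=\ker(F_n\to F_{n-1})$. By exactness $K_n=\operatorname{im}(F_{n+1}\to F_n)\subseteq\rad\Gamma\cdot F_n$. Moreover $K_n\neq0$: otherwise $\beta_{n+1}=0$, since the next map would be injective with image inside the radical, which would make $\pd S_x$ finite.

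\textbf{Part (2).} Apply the exact functor $e_z(-)$ to the tail. This yields $z\La w\otimes_\Gamma F_\bullet$, using $e_z\La e_w\otimes_\Gamma F\cong e_z(\La e_w\otimes_\Gamma F)$. The result is exact because the tail is exact, away from the boundary degree $N$. For the Tor reformulation, the truncations of $F_\bullet$ form a free resolution of $K_n$ (the truncation $F_{\geq n+1}\to K_n$ is a resolution). Hence $\Tor_i^\Gamma(z\La w,K_n)$ is the homology of $z\La w\otimes F_\bullet$ at degree $n+i$, and this vanishes for $i\geq1$. The converse is the same identification read backwards. The only delicacy is the lower end of the range, which I would handle by restricting to $n$ strictly above $N$.

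\textbf{Part (3).} Let $s=\{\gamma:\gamma\rad\Gamma=0\}$. This is nonzero because $\Gamma$ is a finite-dimensional local algebra: take the left annihilator of $\rad\Gamma$, which contains the last nonzero power $\rad^t\Gamma$. Any $v\in\rad\Gamma\cdot F_n$ is a sum of terms $r f$ with $r\in\rad\Gamma$, so $\gamma v=\sum\gamma r f=0$. A nonzero free summand $\Gamma e\subseteq K_n$ would contain a generator $e$ with $\gamma e\neq0$ for $\gamma\in s\setminus\{0\}$, by freeness. This is a contradiction.

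The main obstacle I anticipate is bookkeeping rather than ideas. One must fix the side conventions (left modules, right multiplication by matrices) so that $\rad\Gamma\cdot F$ and the left action of $s$ match. One must also pin down the exact tail range in which exactness and the Tor identification hold.
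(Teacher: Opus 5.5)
Your proposal is correct and is essentially the paper's proof. The tail is identified with $\La e_w\otimes_\Gamma F_\bullet$, with differential entries in $\rad\Gamma$ by the argument of Theorem~\ref{thm:concentration}(4); exactness is transported by the exact Peirce functors $e_w(-)$ and $e_z(-)$; your truncated resolutions $F_{\geq n+1}\to K_n$ are a repackaging of the paper's dimension shifting along $0\to K_n\to F_n\to K_{n-1}\to0$; and the right socle kills $\rad\Gamma\cdot F_n$ exactly as in the paper.

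Only cosmetic fixes are needed. First, $\Tor_i^\Gamma(z\La w,K_n)$ is the homology of $(z\La w)\otimes_\Gamma F_\bullet$ at degree $n+1+i$, not $n+i$. Second, $K_n\neq0$ is cleanest as $K_n=e_w\Omega^{n+1}(S_x)$, which is nonzero because $\Omega^{n+1}(S_x)\neq0$ has its top at $w$. Your phrase about an injective map should instead say that $K_n=0$ forces $d_{n+1}=0$, so $F_{n+1}=\operatorname{im}d_{n+2}\subseteq\rad\Gamma\cdot F_{n+1}$, and Nakayama gives $\beta_{n+1}=0$.
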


\begin{proof}
(1) $\La e_w\otimes_\Gamma\Gamma^{\beta}=P_w^{\beta}$, and right multiplication by a matrix over $\rad\Gamma$ is $\Id\otimes d$; Theorem~\ref{thm:concentration}(4) gives minimality. Exactness and $K_n\neq0$ restate $\pd S_x=\infty$ with the stated tops. Applying the exact functor $e_w(-)$ shows $F_\bullet=e_w(\text{tail})$ is exact, and $K_n=e_w\Omega^{n+1}(S_x)$ up to the indexing of the range.

(2) Exactness of a complex of $\La$-modules is detected on Peirce components: $e_z(\La e_w\otimes_\Gamma F)=(z\La w)\otimes_\Gamma F$. For a resolution tail, exactness of $M\otimes_\Gamma F$ at all spots is equivalent, by the long exact sequences of the short exact sequences $0\to K_n\to F_n\to K_{n-1}\to0$, to $\operatorname{Tor}_1^\Gamma(M,K_n)=0$ for all $n$, and then $\operatorname{Tor}_{i+1}(M,K_{n-1})\cong\operatorname{Tor}_i(M,K_n)$ kills all higher Tor.

(3) Let $0\neq\gamma$ lie in the right socle of $\Gamma$ (nonzero since $\Gamma$ is a nonzero finite dimensional algebra) and $v=\sum r_if_i\in\rad\Gamma\cdot F_n$. Then $\gamma v=\sum(\gamma r_i)f_i=0$. A free direct summand of $K_n$ would contain an element with zero annihilator.
\end{proof}

\begin{corollary}[Uniserial dead-zone corner]\label{cor:uniserial}
Suppose in addition $\Gamma\cong k[\ell]/(\ell^m)$, $m\geq2$ (e.g.\ $w$ carries a single loop and no other cycle of $Q$ at $w$ survives). Then every nonzero Peirce module $z\La w$ is \emph{free} as a right $\Gamma$-module. Consequently $\La e_w$ is projective over $\Gamma$ and, by the opposite version of Theorem~\ref{thm:cornerproj}, $\La$ is of infinite co$+$ global dimension. In particular, such a Gap-A failure is simultaneously a counterexample to Han's conjecture and a \emph{positive} answer to the $\tau$-version of Happel's question.
\end{corollary}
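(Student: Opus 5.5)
The plan is to combine the Tor-rigidity of Theorem~\ref{thm:tail}(2),(3) with the fact that, over $\Gamma=k[\ell]/(\ell^m)$, Tor between non-free modules never vanishes. We place ourselves in the situation of Theorem~\ref{thm:concentration}, so that Theorem~\ref{thm:tail} applies with $\Gamma=e_w\La e_w\cong k[\ell]/(\ell^m)$, $m\geq2$. (In the parenthetical example, a single loop at $w$ with no other surviving cycle gives exactly such a corner.)

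\emph{Step 1: module structure over $\Gamma$.} Since $\Gamma$ is a commutative uniserial Nakayama algebra, every finitely generated $\Gamma$-module, left or right, is a direct sum of cyclic modules $\Gamma/\ell^a\Gamma$ with $1\leq a\leq m$. Such a summand is free exactly when $a=m$. Fix $n$ in the tail range. By Theorem~\ref{thm:tail}(1) the kernel $K_n$ is nonzero, and by Theorem~\ref{thm:tail}(3) it has no free summand. Hence $K_n$ contains a summand $\Gamma/\ell^b$ with $1\leq b<m$.

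\emph{Step 2: non-vanishing of $\Tor_1$.} For $1\leq a,b<m$ we show $\Tor_1^\Gamma(\Gamma/\ell^a,\Gamma/\ell^b)\neq0$. The module $\Gamma/\ell^a$ has the periodic free resolution
\[
\cdots\to\Gamma\xrightarrow{\ell^{m-a}}\Gamma\xrightarrow{\ell^{a}}\Gamma\to\Gamma/\ell^a\to0 .
\]
Tensoring with $\Gamma/\ell^b$ gives $\Tor_1=\ker(\ell^a)/\operatorname{im}(\ell^{m-a})$ computed on $\Gamma/\ell^b$. Here $\dim\ker(\ell^a)=\min(a,b)$ and $\dim\operatorname{im}(\ell^{m-a})=\max(0,\,a+b-m)$. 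The difference is therefore either $\min(a,b)$ or $\min(m-a,m-b)$, and both are positive. Now write the right $\Gamma$-module $z\La w$ as a sum of cyclics. If it had a non-free summand $\Gamma/\ell^a$, then $\Tor_1^\Gamma(z\La w,K_n)$ would contain the nonzero space $\Tor_1^\Gamma(\Gamma/\ell^a,\Gamma/\ell^b)$. This contradicts Theorem~\ref{thm:tail}(2). Hence every nonzero $z\La w$ is free, and so $\La e_w=\bigoplus_z z\La w$ is projective as a right $\Gamma$-module.

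\emph{Step 3: conclusions.} Apply the dual form of Theorem~\ref{thm:cornerproj} recorded in Corollary~\ref{cor:cornerscc}, at the vertex $w$, with $\Gamma\neq k$ because $m\geq2$. This gives that $\La$ is of infinite co$+$ global dimension. Since $\La$ is a Gap-A failure, it is a counterexample to Conjecture~\ref{conj:han} by \cite[Rem.~5.7(1)]{CLMS}. Infinite co$+$ means infinite $\tau$-Hochschild cohomology (Proposition~\ref{prop:op} with \cite[\S5]{CLMS}), which is the positive answer to the $\tau$-Happel question for this algebra of infinite global dimension.

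The main obstacle I expect is a bookkeeping one rather than a conceptual one. One must make sure that the side conventions match: Theorem~\ref{thm:tail}(2) uses $z\La w$ as a right $\Gamma$-module against the left modules $K_n$, and the dual of Theorem~\ref{thm:cornerproj} needs exactly right projectivity of $\La e_w$. Commutativity of $\Gamma$ makes the Tor computation side-independent, so I would verify only these orientation identifications carefully.
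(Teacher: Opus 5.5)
Your proof is correct and follows the paper's own argument essentially step for step. Like the paper, you decompose modules over $k[\ell]/(\ell^m)$ into strings, use Theorem~\ref{thm:tail}(1),(3) to find a proper string in some $K_n$, and combine nonvanishing of $\Tor_1$ between proper strings (which you compute explicitly, where the paper only points to the periodic resolution) with Theorem~\ref{thm:tail}(2) to force each nonzero $z\La w$ to be free; you then conclude via the dual of Theorem~\ref{thm:cornerproj} recorded in Corollary~\ref{cor:cornerscc}.
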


\begin{proof}
Over $\Gamma=k[\ell]/(\ell^m)$ every module is a sum of strings $\Gamma/(\ell^t)$, $1\leq t\leq m$, and 
\[
\operatorname{Tor}_1^\Gamma\!\big(\Gamma/(\ell^s),\Gamma/(\ell^t)\big)\neq0
\]
whenever $0<s<m$ and $0<t<m$ (compute with the periodic resolution $\cdots\to\Gamma\xrightarrow{\ell^{m-s}}\Gamma\xrightarrow{\ell^s}\Gamma\to\Gamma/(\ell^s)\to0$). By Theorem~\ref{thm:tail}(3) some $K_n$ contains a proper string; Theorem~\ref{thm:tail}(2) then forbids $z\La w$ from containing any proper string, i.e.\ forces it free or zero. The last statements follow from Corollary~\ref{cor:cornerscc} and \cite[Thm.~5.5, Thm.~5.6]{CLMS}.
\end{proof}

\begin{theorem}[The all-infinite case]\label{thm:allinfinite}
Let $|Q_0|=3$ with $\pd S_x=\infty$ for all three vertices. Then the arrows between distinct vertices form exactly one oriented $3$-cycle $1\to2\to3\to1$ (parallel arrows allowed), possibly with loops; with this labelling, $Z(i)=\{i+1\}$ and
\[
1\La 2=2\La 3=3\La 1=0 ,
\]
i.e.\ every backward Peirce component dies, every diagonal corner $\Gamma_i=e_i\La e_i$ is a nontrivial local algebra generated by surviving cycles at $i$ (so, since longer cycles die, by loops at $i$), and each $i$ violates the extension conjecture eventually: $\Ext^n(S_i,S_i)=0$ for $n\geq N$. Moreover for each $i$ the Peirce module $e_i\La e_{i-1}$ is \emph{not} projective as a left $\Gamma_i$-module, and if $\Gamma_{i+1}$ is uniserial then $e_{i+2}\La e_{i+1}$ is free as a right $\Gamma_{i+1}$-module.
\end{theorem}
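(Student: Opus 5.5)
We argue under the section's standing hypotheses: $\La$ fails infinite $+$ global dimension with threshold $N$, $Q$ is strongly connected, and now $\pd S_i=\infty$ for every vertex $i$. The plan is to apply Theorem~\ref{thm:concentration} at each of the three vertices and then assemble the local conclusions combinatorially. For each vertex $i$, Theorem~\ref{thm:concentration}(1) gives a unique external in-neighbour $u(i)$ and $Z(i)=\{w(i)\}$, where $w(i)$ is the remaining vertex. Theorem~\ref{thm:concentration}(2) adds that there is no arrow $w(i)\to i$ and that there is an arrow $w(i)\to u(i)$.

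\textbf{The $3$-cycle.} This is the step where the bookkeeping must be done carefully. Since $w(i)$ has an arrow into $u(i)$ and $w(i)\neq u(i)$, the vertex $w(i)$ is an external in-neighbour of $u(i)$. By uniqueness, $u(u(i))=w(i)$. Hence $u$ is a fixed-point-free map on three vertices with $u\circ u$ also fixed-point-free, i.e.\ a $3$-cycle. Label the vertices so that $u(i)=i-1$; then $w(i)=i+1$. The only arrows between distinct vertices are then $i-1\to i$: arrows $i+1\to i$ are excluded, and no other vertex is an in-neighbour of $i$. Parallel arrows and loops are not excluded. The identity $Z(i)=\{i+1\}$ reads $i\La(i+1)=0$, i.e.\ $1\La2=2\La3=3\La1=0$.

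\textbf{Corners and Ext vanishing.} Theorem~\ref{thm:concentration}(4), applied at the vertex $j$ with $w(j)=i$ (namely $j=i-1$), gives $\Gamma_i\neq k$. Next, any closed path at $i$ that leaves $i$ must pass through $i+1$ before returning to $i$. Its final segment from the last visit of $i+1$ back to $i$ is then a nonzero element of $i\La(i+1)=0$. So surviving cycles at $i$ avoid the other vertices, and $\Gamma_i$ is generated by the loops at $i$. The eventual vanishing $\Ext^n(S_i,S_i)=0$ for $n\geq N$ is Proposition~\ref{prop:anatomy}.

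\textbf{Peirce modules.} For non-projectivity, suppose $e_i\La e_{i-1}$ were projective over $\Gamma_i$. We check (P$'$) for every $y\in\bigcup_nT_n(i)$:
\begin{itemize}
\item[] $y=i$: the module $\Gamma_i$ is free;
\item[] $y=i+1$: the module $e_i\La e_{i+1}=0$ is projective;
\item[] $y=i-1$: projective by assumption.
\end{itemize}
So (P$'$) holds, and Theorem~\ref{thm:cornerproj} makes $\La$ infinite $+$, a contradiction. Finally, if $\Gamma_{i+1}$ is uniserial, then Corollary~\ref{cor:uniserial}, applied at the vertex $x=i$ with $w(i)=i+1$, makes every nonzero $z\La(i+1)$ free as a right $\Gamma_{i+1}$-module; taking $z=i+2$ gives the last claim.
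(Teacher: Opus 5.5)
Your proof is correct and follows the paper's argument step for step: Theorem~\ref{thm:concentration} at every vertex, then Proposition~\ref{prop:anatomy}, Theorem~\ref{thm:cornerproj} and Corollary~\ref{cor:uniserial}. The only variation is how you get the $3$-cycle: you use the arrow $w(i)\to u(i)$ from Theorem~\ref{thm:concentration}(2) plus uniqueness of in-neighbours to get $u(u(i))=w(i)$, whereas the paper uses strong connectivity (every vertex has an external out-arrow) to make the in-neighbour map surjective, hence a fixed-point-free permutation of three points.
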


\begin{proof}
Each vertex has exactly one external in-neighbour (Theorem~\ref{thm:concentration}(1) at each vertex), defining $g\colon Q_0\to Q_0$, $g(i)=$ the source of the arrows into $i$. Out-neighbours of $v$ are contained in $g^{-1}(v)$, and strong connectivity forces every vertex to have an external out-arrow, so $g$ is surjective, hence bijective, and fixed-point free (a vertex whose only in-arrows are loops is unreachable); on three elements this makes $g$ a $3$-cycle. Relabel so that arrows run $1\to2\to3\to1$. Then $W(i)=\{i,i-1\}$ and $Z(i)=\{i+1\}$, giving $i\La(i+1)=0$ cyclically; in particular the composite of two consecutive cycle arrows dies, so every $3$-cycle path dies and $\Gamma_i$ is generated by loops. Nontriviality of each $\Gamma_{i+1}$ is Theorem~\ref{thm:concentration}(4) applied at $i$ (whose dead zone is $i+1$). Vanishing of high diagonal Ext is Proposition~\ref{prop:anatomy}. Non-projectivity of $e_i\La e_{i-1}$ over $\Gamma_i$: otherwise hypothesis (P$'$) of Theorem~\ref{thm:cornerproj} holds at $i$ (the resolution supports of $S_i$ meet only $e_i\La e_{i-1}$, $\Gamma_i$ and $e_i\La e_{i+1}=0$) yielding infinite $+$, a contradiction. The last claim is Corollary~\ref{cor:uniserial} applied at $x=i$, $w=i+1$, $z=i+2$.
\end{proof}

\begin{definition}[Gap-A template]\label{temp:blueprint}
A \emph{Gap-A template} is a triple of local algebras $\Gamma_1,\Gamma_2,\Gamma_3$ together with $(\Gamma_{i+1},\Gamma_i)$-bimodules $V_i=e_{i+1}\La e_i$ and structure constants assembling into an admissible $\La=kQ/I$ on the $3$-cycle-with-loops quiver such that: 
\begin{itemize}
    \item (a) $i\La(i+1)=0$ cyclically;
    \item (b) each $V_i$ is non-projective over $\Gamma_{i+1}$ on the left, and free over $\Gamma_i$ on the right whenever the relevant corner is uniserial;
    \item (c) each simple has infinite projective dimension, with a tail as in Theorem~\ref{thm:tail} whose kernels are Tor-orthogonal to all Peirce modules;
    \item d) $\Ext^n(S_i,S_i)=0$ for all $i$ and $n\geq N$. 
\end{itemize}
Realizing a template is exhibiting a counterexample to Han's conjecture; Proving (a)--(d) inconsistent is proving Gap A for $v=3$ in the all-infinite case.
\end{definition}

\begin{remark}\label{rem:verdict}
Two comments on where this leaves Gap A. First, condition (d) is an eventual violation of the extension conjecture at every vertex simultaneously; since the extension conjecture is known for monomial and special biserial algebras, any realization is far from those classes, and Corollary~\ref{cor:uniserial} shows it would nonetheless carry the co$+$ structure of a ``one-sided'' object, as far as we know, a shape not previously described. Second, and decisively for the search program: the constraints (a)--(d) are \emph{finite-dimensional matrix conditions once the loop nilpotency orders and the ranks of the $V_i$ are fixed}, the left actions are matrices $G_i(\ell)$ over truncated polynomial rings with $G_i^{m}=0$ and prescribed Jordan-type (non-freeness) and Tor-orthogonality conditions. The analysis of Gap A carried out with the tools of this paper therefore terminates in a decidable-in-principle family of consistency problems, stratum by stratum, precisely the kind of object a large-scale computational search can resolve, in either direction.
\end{remark}

\subsection*{The single-loop stratum}
\noindent
Realizability of Template~\ref{temp:blueprint} begins with the simplest admissible corners: one loop per vertex, monomial transport. Define, for $m_i\geq2$, $d_i\geq1$,
\[
\La(m;d)\;=\;kQ\big/\big\langle\,\ell_i^{\,m_i},\;a_{i+1}a_i,\;\ell_{i+1}a_i-a_i\ell_i^{\,d_i}\;(i\in\mathbb Z/3)\,\big\rangle,
\]
$Q$ the $3$-cycle with a loop $\ell_i$ at each vertex. One checks (Diamond lemma; the overlap $\ell_{i+1}^{m_{i+1}}a_i$ yields the hidden relation $a_i\ell_i^{\,d_im_{i+1}}=0$) that $\Gamma_i=k[\ell_i]/(\ell_i^{m_i})$, that $e_{i+1}\La e_i$ has basis $a_i\ell_i^{c}$, $c<\min(m_i,d_im_{i+1})$, and that all conditions (a), (b) of Template~\ref{temp:blueprint} are satisfiable in this family. Nevertheless no member is a Gap-A failure, for a reason that holds in complete generality.

\begin{theorem}[Protected loop]\label{thm:protectedloop}
Let $\La=kQ/I$ be elementary and let $x\in Q_0$ carry a loop $\alpha$ with $e_x\La e_x=k[\alpha]/(\alpha^m)$, $m\geq2$. Assume
\[
y\La x=0\qquad\text{for every external in-neighbour $y$ of $x$}
\]
(no path from $x$ back to any vertex pointing at $x$ survives). Then every syzygy $\Omega^n(S_x)$, $n\geq1$, has a minimal generator with top $S_x$. Hence $\Ext^n_\La(S_x,S_x)\neq0$ for all $n\geq1$, the extension conjecture holds at $x$, and $\La$ is of infinite $+$ global dimension.
\end{theorem}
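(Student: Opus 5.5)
The plan is to compare the minimal $\La$-resolution of $S_x$ with the minimal resolution of $k$ over $\Gamma=e_x\La e_x=k[\alpha]/(\alpha^m)$, through the quotient algebra $\La/J$, where $J:=\La\big(\sum_{y\neq x}e_y\big)\La$ is the ideal generated by the other vertex idempotents.

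\textbf{Step 1 (identify $\La/J$).} I first show $\La/J\cong\Gamma$. The quotient is concentrated at $x$, and $e_x(\La/J)e_x=\Gamma/e_xJe_x$. Now $e_xJe_x$ is spanned by surviving paths $x\to x$ that pass through some vertex $y\neq x$. Such a path's last external step enters $x$ from an external in-neighbour $y$, so it contains a subpath from $x$ to $y$ (traversed before that final arrow). That subpath dies by the protection hypothesis $y\La x=0$. Hence $e_xJe_x=0$ and $\La/J=\Gamma$. By the same argument, every surviving path $x\to x$ is a power of $\alpha$, and $S_x$ is a $\Gamma$-module.

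\textbf{Step 2 (base change of the resolution).} Apply the right exact functor $\Gamma\otimes_\La-$ to the minimal resolution $Q_\bullet\to S_x$. It sends $P_x$ to $\Gamma$ and $P_y$ ($y\neq x$) to $e_x\Gamma\otimes\ldots=0$, since $\Gamma e_y=0$. So we obtain a complex $\bar Q_\bullet$ of free $\Gamma$-modules with $\operatorname{rank}\bar Q_n=C^{(n)}_{xx}=\dim\Ext^n_\La(S_x,S_x)$. Its differentials have entries in $\rad\Gamma$, because minimality puts the differentials of $Q_\bullet$ into $\rad\La$. Its homology in degree $i$ is $\Tor^\La_i(\Gamma,S_x)$, and it is augmented to $\Gamma\otimes_\La S_x=k$.

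\textbf{Step 3 (exactness — the main obstacle).} I must show $\Tor^\La_i(\La/J,S_x)=0$ for $i\geq1$, i.e.\ $\Tor^\La_{i-1}(J,S_x)=0$. Once this holds, $\bar Q_\bullet$ is a minimal free resolution of $k$ over $\Gamma$, hence equal to the periodic resolution $\cdots\xrightarrow{\alpha}\Gamma\xrightarrow{\alpha^{m-1}}\Gamma\xrightarrow{\alpha}\Gamma\to k$. This gives $C^{(n)}_{xx}=1$ for all $n$, in particular the top of $\Omega^n(S_x)$ contains $S_x$.

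My first attempt at Step 3 is to run the induction directly on syzygies. The claim to prove is that $\Omega^n(S_x)$ contains a minimal generator $g_n\in e_x\Omega^n(S_x)$, with $\Lambda g_n$ cyclic and $e_x\Lambda g_n\cong\Gamma\alpha^{s}$ for $s\in\{1,m-1\}$ alternating, such that the remaining generators lie at vertices $y\neq x$. The protection hypothesis is what lets this work: it ensures that paths from those $y$ back to $x$, and paths $x\to y\to x$, all vanish. This means the non-$x$ generators never contribute to $e_x$ of the next syzygy in a way that cancels the $\alpha$-thread. Concretely, the kernel of $P_x\oplus\bigoplus P_y\to\Omega^n$ restricted to $e_x$ is $\ker(\Gamma\xrightarrow{\alpha^{s}}\Gamma)\oplus 0=\Gamma\alpha^{m-s}$.

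If this direct route gets entangled, the fallback is to use the right-module structure $e_xJ$: it is spanned by paths from $x$ into other vertices. I would then check that $J$ is projective as a left $\La$-module on the relevant Peirce part (a stratifying-ideal argument), which kills the Tor groups.

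\textbf{Step 4 (conclusion).} From $\dim\Ext^n(S_x,S_x)\geq1$ for all $n\geq1$ the extension conjecture holds at $x$. Remark~\ref{rem:extconj} then gives $(S_n)_{xx}\geq1$ for all $n$, and Theorem~\ref{thm:criterion}(5) yields infinite $+$ global dimension.
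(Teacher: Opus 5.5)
\textbf{There is a genuine gap: the statement you aim for in Step~3 is false.} Consequently neither the direct induction nor the stratifying-ideal fallback can prove it.

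\emph{Where the reasoning goes wrong.} Protection says $y\La x=0$, i.e.\ paths from $x$ \emph{to} an external in-neighbour $y$ die. This kills the through-cycles $x\La y\cdot y\La x$. It does not kill $e_xP_y=x\La y$, the paths from $y$ \emph{into} $x$, which contain the arrows $y\to x$ themselves. Your sentence that protection makes ``paths from those $y$ back to $x$'' vanish reverses the direction. As a result, the claimed decomposition $\ker(\Gamma\xrightarrow{\alpha^s}\Gamma)\oplus 0$ of $e_x\Omega^{n+1}$ is wrong: generators of $\Omega^n(S_x)$ at vertices $y\neq x$ can combine, along paths into $x$, into new minimal generators with top $S_x$.

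\emph{A counterexample.} Take vertices $x,w,u$, a loop $\alpha$ at $x$, and arrows $b\colon x\to w$, $d_1,d_2\colon w\to u$, $c\colon u\to x$. Set $I=\langle\alpha^2,\,d_1b,\,d_2b,\,cd_1-cd_2\rangle$, with paths composed right to left. The only external in-neighbour of $x$ is $u$. Every path $x\to u$ contains some $d_ib$, so $u\La x=0$ and $e_x\La e_x=k[\alpha]/(\alpha^2)$; all hypotheses hold. The cover $P_x\oplus P_w\to\rad P_x$ sends the generators to $\alpha$ and $b$. Its kernel is $\Omega^2(S_x)=\La\alpha\oplus\rad P_w$, with minimal generators $\alpha$ at $x$ and $d_1,d_2$ at $u$. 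The next step gives
\[
\Omega^3(S_x)=\La(\alpha,0,0)\oplus\La(0,c,-c)\subseteq P_x\oplus P_u\oplus P_u .
\]
Here $(0,c,-c)$ lies in the kernel because $cd_1=cd_2$. It is not in $\rad\Omega^3\subseteq\rad^2(P_x\oplus P_u^2)$, because $c$ is an arrow. So both minimal generators of $\Omega^3(S_x)$ have top $S_x$, and
\[
\dim\Ext^3_\La(S_x,S_x)=2>1=\dim\Ext^3_\Gamma(k,k).
\]
Your complex $\Gamma\otimes_\La Q_\bullet$ is the periodic resolution of $k$ plus an extra copy of $\Gamma$ in degree $3$ with zero differentials in and out. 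Hence $\Tor^\La_3(\Gamma,S_x)\cong\Gamma\neq0$, and the conclusion $C^{(n)}_{xx}=1$ fails.

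\emph{What is true, and how the paper gets it.} Only an inequality holds, and it must come from following a thread, not from exactness. The paper applies the exact functor $e_x(-)$ and tracks one generator $\kappa_n$: the pure element $\alpha^{s_n}$ (with $s_1=1$, $s_{n+1}=m-s_n$) in the $\Gamma$-slot of the $P_x$-copy over $\kappa_{n-1}$. Two facts drive the induction.
\begin{itemize}
\item Every element of $\Omega^n$ has its coordinate in that slot inside $(\alpha^{s_n})$. Contributions from generators at $y\neq x$ land in $x\La y\cdot y\La x=0$ (Lemma~\ref{lem:throughcycles}).
\item $\rad\La\cdot\Omega^n$ meets that slot only in $(\alpha^{s_n+1})$. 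The loop raises the power, and an arrow $y\to x$ acts on elements of $e_y\Omega^n$, whose $P_x$-coordinates lie in $y\La x=0$.
\end{itemize}
Thus $\kappa_n\notin\rad\Omega^n$ is a minimal generator at $x$, and nothing is claimed about the other generators. In your language, the periodic resolution of $k$ splits off $\Gamma\otimes_\La Q_\bullet$ as a direct summand complex. As the example shows, the complement need not vanish, so $C^{(n)}_{xx}\geq1$ must be read off from that summand rather than from Tor-vanishing.
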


\begin{proof}
Write $\Gamma=k[\alpha]/(\alpha^m)$. For a minimal generating set $G$ of $\Omega^{n-1}$ the cover is $A_n=\bigoplus_{g\in G}P_{v(g)}$, actions block-diagonal per copy; for a copy $C$ of $P_x$ call $e_xC\supseteq\Gamma$ its \emph{$\Gamma$-block}, and note that for $y\neq x$ the $C$-coordinate of an element of $e_yA_n$ lies in $e_yP_x=y\La x$. We prove by induction: \emph{$\Omega^n$ admits a minimal generating set containing an element $\kappa_n$ which is the pure element $\alpha^{s_n}$ in the $\Gamma$-block $B_n$ of the copy over $\kappa_{n-1}$; $\kappa_n$ is the unique generator with nonzero $B_n$-coordinate; and every element of $\Omega^n$ has the $\Gamma$-part of its $B_n$-coordinate in $(\alpha^{s_n})$}, where $s_1=1$, $s_{n+1}=m-s_n$, and $\kappa_0$ is the canonical generator of $P_x=A_1$.

\emph{Base.} $\Omega^1=\rad P_x\ni\kappa_1:=\alpha$. Arrows are independent modulo $\rad^2$, so $\alpha\notin\rad^2\La=\rad(\rad P_x)$, and $\kappa_1$ is a minimal generator; the $\Gamma$-part of any element of $\rad P_x$ lies in $(\alpha)$; replacing any other generator $h$ by $h-p(\alpha)\kappa_1$ for suitable $p$ kills its $\Gamma$-block coordinate.

\emph{Step.} Assume the claim for $n-1$ and let $\varphi\colon A_n\to\Omega^{n-1}$ be the cover.

(i) \emph{$\kappa_n\in\ker\varphi$}: $\varphi(\kappa_n)=\alpha^{s_n}\cdot\kappa_{n-1}$, the pure element $\alpha^{s_n+s_{n-1}}=\alpha^m=0$ of $B_{n-1}$.

(ii) \emph{Coordinate bound.} Let $z\in\Omega^n=\ker\varphi$. The $\Gamma$-part of the $B_{n-1}$-coordinate of $\varphi(z)$ receives contributions only from the copy over $\kappa_{n-1}$ (uniqueness clause at level $n-1$) and, by the Peirce decomposition, only from the $\Gamma$-part $q$ of $z$'s coordinate there: it equals $q\,\alpha^{s_{n-1}}$. Vanishing forces $q\in\operatorname{ann}(\alpha^{s_{n-1}})=(\alpha^{m-s_{n-1}})=(\alpha^{s_n})$, using uniseriality. This is the third clause at level $n$.

(iii) \emph{Minimality.} $\rad\La\cdot\Omega^n$ meets the $\Gamma$-slots of $B_n$ only through: loops at $x$ acting on the $\Gamma$-part of $B_n$-coordinates of elements of $e_x\Omega^n$, every loop lies in $\rad\Gamma=(\alpha)$, and those coordinates lie in $(\alpha^{s_n})$ by (ii), so the result lies in $(\alpha^{s_n+1})$; or arrows $\beta\colon y\to x$, $y\neq x$, acting on the $C$-coordinates of elements of $e_y\Omega^n$, which lie in $y\La x=0$ by hypothesis. Since $\kappa_n=\alpha^{s_n}\notin(\alpha^{s_n+1})$, $\kappa_n\notin\rad\Omega^n$: it is a minimal generator, with top $S_x$ because $B_n$ sits in $e_xA_n$.

(iv) \emph{Uniqueness.} By (ii) any other minimal generator $h$ has $B_n$-coordinate $p(\alpha)\,\alpha^{s_n}$ for some polynomial $p$; replace $h$ by $h-p(\alpha)\kappa_n$, which changes neither minimality nor the span. Since $1\leq s_n\leq m-1$ for all $n$, the induction never degenerates.
\end{proof}

\begin{corollary}\label{cor:noloopstratum}
\begin{itemize}
    \item \emph{(1)} In the all-infinite $v=3$ configuration of Theorem~\ref{thm:allinfinite}, no corner $\Gamma_i$ is generated by a single loop: there the unique external in-neighbour of $i$ is $i-1$ and $(i-1)\La i=0$ is among the forced Peirce deaths, so a uniserial loop corner triggers Theorem~\ref{thm:protectedloop}, producing $\Ext^n(S_i,S_i)\neq0$ for all $n$ against the one-way law. Hence every vertex of such a failure carries at least two loops and every $\Gamma_i$ is a non-uniserial local algebra.
    \item  \emph{(2)} In particular, every $\La(m;d)$ is of infinite $+$ global dimension and satisfies the extension conjecture at every vertex; in the regime $m_i\geq3$ and $2\leq d_i<m_i$ (e.g.\ $m=(3,3,3)$, $d=(2,2,2)$) these algebras are neither special biserial (at each vertex both $\ell_i\ell_i$ and $a_i\ell_i$ survive) nor monomial (the relation $\ell_{i+1}a_i=a_i\ell_i^{d_i}$ equates the nonzero paths $\ell_{i+1}a_i$ and $a_i\ell_i^{d_i}$ of lengths $2$ and $d_i+1$, precluding a length grading), so these instances do not reduce to the known monomial and special biserial cases of the extension conjecture without further argument.
\end{itemize}

\end{corollary}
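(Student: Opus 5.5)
Part (1) is a direct clash between Theorem~\ref{thm:protectedloop} and the one-way law of Proposition~\ref{prop:anatomy}. Part (2) applies Theorem~\ref{thm:protectedloop} to the explicit family $\La(m;d)$ and then checks two structural claims by inspecting the relations.

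\emph{Part (1).} Place ourselves in the all-infinite configuration of Theorem~\ref{thm:allinfinite}, labelled $1\to2\to3\to1$.

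1. The unique external in-neighbour of $i$ is $i-1$. The forced Peirce deaths $j\La(j+1)=0$, read at $j=i-1$, give $(i-1)\La i=0$. This is exactly the protection hypothesis of Theorem~\ref{thm:protectedloop} at $x=i$.

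2. Suppose $\Gamma_i$ is generated by a single loop $\alpha$. By Theorem~\ref{thm:allinfinite}, longer cycles through $i$ die, so $\Gamma_i$ is a quotient of $k[\alpha]$. Hence $\Gamma_i=k[\alpha]/(\alpha^m)$, and $m\geq2$ because $\Gamma_i\neq k$ (Theorem~\ref{thm:concentration}(4)).

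3. Theorem~\ref{thm:protectedloop} then gives $\Ext^n(S_i,S_i)\neq0$ for all $n\geq1$. This contradicts Proposition~\ref{prop:anatomy}, which says $\Ext^n(S_i,S_i)=0$ for $n\geq N$.

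4. Therefore each vertex carries at least two loops. To conclude that $\Gamma_i$ is non-uniserial, I would show that $\dim\rad\Gamma_i/\rad^2\Gamma_i$ equals the number of loops at $i$. Loops are independent modulo $\rad^2\La$. Moreover $e_i\rad^2\La\, e_i=(\rad\Gamma_i)^2$, because any path of length $\geq2$ from $i$ to $i$ that leaves $i$ contains a composite of two consecutive cycle arrows, and these die. A local algebra with at least two-dimensional $\rad/\rad^2$ is not uniserial.

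\emph{Part (2).} Fix $\La(m;d)$ and use the Diamond-lemma description stated before Theorem~\ref{thm:protectedloop}: $\Gamma_i=k[\ell_i]/(\ell_i^{m_i})$ with $m_i\geq2$. It remains to verify $(i-1)\La i=0$ for each $i$.

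1. Every path from $i$ to $i-1$ passes through $i+1$. It therefore has the form $\ell_{i-1}^{p}\,a_{i+1}\,\ell_{i+1}^{q}\,a_i\,\ell_i^{r}$, possibly with further cycle traversals.

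2. The commutation relation $\ell_{i+1}a_i=a_i\ell_i^{d_i}$ moves all the $\ell_{i+1}$ factors to the right past $a_i$. This exhibits a factor $a_{i+1}a_i$, which is a relation, so the path is $0$ in $\La$.

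3. Theorem~\ref{thm:protectedloop} now applies at every vertex. It yields infinite $+$ global dimension and the extension conjecture at each $i$.

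4. For the structural claims in the regime $m_i\geq3$ and $2\leq d_i<m_i$:
\begin{itemize}
\item[] \emph{Not special biserial.} From the basis description, $\ell_i^2\neq0$ since $m_i\geq3$, and $a_i\ell_i\neq0$ since $1<\min(m_i,d_im_{i+1})$. So the arrow $\ell_i$ has two distinct arrows $\ell_i,a_i$ composing nontrivially after it.
\item[] \emph{Not monomial.} The relation $\ell_{i+1}a_i=a_i\ell_i^{d_i}$ identifies two nonzero paths of different lengths $2\neq d_i+1$. So $\La$ admits no path-length grading with homogeneous relations. I would justify this by noting that a monomial presentation is graded by path length, and $\dim\rad^j/\rad^{j+1}$ data (equivalently, the associated graded) would have to match; the non-homogeneous identity rules that out.
\end{itemize}

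\emph{Main obstacle.} I expect the non-monomiality claim to be the delicate step. Monomiality should be understood up to change of presentation, so a rigorous version needs an invariant argument, for instance comparing $\La$ with its radical-graded algebra. The paper only phrases this as ``without further argument'', so the heuristic justification above suffices for the stated claim.
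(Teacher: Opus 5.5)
Your proposal is correct and follows the paper's proof. In both parts the forced Peirce death $(i-1)\La i=0$ supplies exactly the protection hypothesis of Theorem~\ref{thm:protectedloop}; for $\La(m;d)$ this death comes from factoring every path $i\to i-1$ through $a_{i+1}a_i$ after loop transport. In the all-infinite case a single-loop corner then contradicts the one-way law. Your count of $\dim\rad\Gamma_i/\rad^2\Gamma_i$ makes explicit the step from ``not single-loop generated'' to ``at least two loops, non-uniserial'', which the paper leaves to the reader.

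One correction on your stated obstacle: ``without further argument'' qualifies the reduction to the known monomial and special biserial cases, not the non-monomiality, which the statement asserts outright. The invariant proof you are looking for is short, though. After conjugating the vertex idempotents, any generators of a new presentation have the form $\ell'_{i+1}=\ell_{i+1}h(\ell_{i+1})$ and $a'_i=a_i\,g(\ell_i)$ with $h(0)g(0)\neq0$. Hence $\ell'_{i+1}a'_i=a_i\ell_i^{d_i}\,h(\ell_i^{d_i})g(\ell_i)$, which is nonzero because $d_i<m_i$ and yet lies in $\rad^3$ because $d_i\geq2$. In a monomial algebra a product of two arrows is either zero or a length-two path outside $\rad^3$, so no monomial presentation exists.
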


\begin{proof}
(1) The hypotheses of Theorem~\ref{thm:protectedloop} at $x=i$ are exactly the conclusions of Theorem~\ref{thm:allinfinite}: $\Gamma_i$ loop-generated (a single loop gives $k[\ell]/(\ell^{m})$ by admissibility) and $(i-1)\La i=0$. A single-loop corner thus yields diagonal tops at every level $n\geq1$, contradicting Proposition~\ref{prop:anatomy} for $n\geq N$. (2) In $\La(m;d)$ every path $i\to i-1$ factors through $a_{i+1}a_i$ up to loop transport, so $(i-1)\La i=0$, and $\Gamma_i=k[\ell_i]/(\ell_i^{m_i})$; apply the theorem at any vertex.
\end{proof}

\begin{remark}[The new frontier]\label{rem:certificate}
Observe that in this stratum the template defeats itself: the Peirce components that Theorem~\ref{thm:allinfinite} forces to vanish form precisely the protection hypothesis of Theorem~\ref{thm:protectedloop}. The $v=3$ all-infinite frontier for Gap A, for proof and for search alike, is therefore now: at least two loops at every vertex, non-uniserial corners violating (P$'$), and Tor-rigid tails as in Theorem~\ref{thm:tail}.
\end{remark}

\subsection*{From protected loops to protected corners}

\begin{remark}[A correction]\label{rem:gapfix}
Step (ii) of the proof of Theorem~\ref{thm:protectedloop}, as written, is incomplete: contributions to the $\Gamma$-part of $B_{n-1}$ arising from a kernel element's $e_x\La e_y$-coordinate acting on the $y\La x$-part of a generator's coordinate ($y\neq x$) are not excluded by the Peirce decomposition alone. Lemma~\ref{lem:throughcycles} below closes this channel; the statement of Theorem~\ref{thm:protectedloop} is correct and is subsumed by Theorem~\ref{thm:protectedcorner}.
\end{remark}

\begin{lemma}\label{lem:throughcycles}
Let $x$ satisfy: $y\La x=0$ for every external in-neighbour $y$ of $x$. Then $e_x\La e_y\cdot e_y\La e_x=0$ for every $y\neq x$; the corner $\Gamma=e_x\La e_x$ is generated by the loops at $x$; and if $\Gamma\neq k$ then $x$ carries a loop and $\operatorname{pd}_\Gamma k=\infty$.
\end{lemma}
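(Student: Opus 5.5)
The plan is to argue directly with paths in $kQ$ and their images in $\La=kQ/I$. Throughout, $y\La x=e_y\La e_x$ is spanned by the classes of paths from $x$ to $y$. The only input is the protection hypothesis $y'\La x=0$ for every external in-neighbour $y'$ of $x$, together with the Euler-characteristic argument already used in the proof of Theorem~\ref{thm:cornerproj}.

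\emph{Step 1: through-cycles die.} Fix $y\neq x$, $p\in e_y\La e_x$ and $q\in e_x\La e_y$. By bilinearity we may take $q$ to be the class of a single path $\gamma$ from $y$ to $x$ of positive length, while $p$ stays an arbitrary element. Since $\gamma$ starts at $y\neq x$ and ends at $x$, it has a last arrow entering $x$ from a vertex other than $x$. So we can factor
\[
\gamma=\lambda\,\beta\,r ,
\]
where $\lambda$ is a (possibly trivial) product of loops at $x$, $\beta\colon y'\to x$ is an arrow with $y'\neq x$, and $r$ is a path from $y$ to $y'$ (possibly trivial if $y'=y$). Then $qp=\lambda\beta\,(rp)$. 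Here $rp\in e_{y'}\La e_x=y'\La x$, and this space is $0$ because $y'$ is an external in-neighbour of $x$. Hence $qp=0$, and therefore $e_x\La e_y\cdot e_y\La e_x=0$. Doing the factorization at the level of the element $rp\in y'\La x$, rather than path by path, is what avoids any worry about cancellations in $I$.

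\emph{Step 2: $\Gamma$ is loop-generated.} The corner $\Gamma$ is spanned by classes of closed paths at $x$. Take such a path. If it visits a vertex $y\neq x$, split it at that visit into a path $x\to y$ followed by a path $y\to x$. Its class is then a product $qp$ with $p\in e_y\La e_x$ and $q\in e_x\La e_y$, which vanishes by Step~1. The surviving spanning paths therefore never leave $x$, so they are monomials in the loops at $x$. This is exactly the channel left open in Remark~\ref{rem:gapfix}.

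\emph{Step 3: the case $\Gamma\neq k$.} If $\Gamma\neq k$, then $\rad\Gamma=e_x(\rad\La)e_x\neq0$, so some closed path of positive length at $x$ survives. By Step~2 it is a product of loops, so $x$ carries a loop. For $\pd_\Gamma k=\infty$, I would repeat the argument of Theorem~\ref{thm:cornerproj}. A finite minimal free resolution $0\to\Gamma^{b_d}\to\cdots\to\Gamma^{b_0}\to k\to0$ gives $1=\big(\sum_i(-1)^ib_i\big)\dim\Gamma$, which forces $\dim\Gamma=1$, i.e.\ $\Gamma=k$, contradicting the assumption.

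I expect the only delicate point to be Step~1. Elements of $e_y\La e_x$ are linear combinations of path classes and need not be represented by single paths. The decomposition must therefore be taken on $q$ path by path, while the protection hypothesis is applied to the composite element $rp$ as a whole. Steps~2 and~3 are then routine.
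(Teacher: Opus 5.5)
Your proof is correct. Steps~1 and~2 follow the paper's argument. The paper factors a path $y\to x$ at its \emph{first} entry into $x$, leaving an arbitrary element of $\Gamma$ on the left; you factor at the \emph{last} external arrow, leaving a loop word. Either way the protection hypothesis is applied to the composite element $rp\in y'\La x$, as you emphasize, so this changes nothing.

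The genuine difference is in the final claim $\pd_\Gamma k=\infty$. You reuse the dimension count from the proof of Theorem~\ref{thm:cornerproj}, where $1=\big(\sum_i(-1)^ib_i\big)\dim\Gamma$ forces $\Gamma=k$. The paper instead uses a socle argument. A nonzero $\gamma$ with $\gamma\,\rad\Gamma=0$ annihilates every element of $\rad\Gamma\cdot F$ for $F$ free, so no nonzero submodule of $\rad\Gamma\cdot F$ has a free summand. Since the minimal syzygies of $k$ embed in such radicals, none of them is projective.

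Your count is shorter and applies verbatim to any finite free resolution, minimal or not. The paper's argument is structural rather than numerical: it shows that the syzygies of $k$ contain no free summands at all. That is the same mechanism the paper reuses in Theorem~\ref{thm:tail}(3) and cites in Corollary~\ref{cor:nonproj}.
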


\begin{proof}
A surviving path $p\colon y\to x$, $y\neq x$, factors as $p=s\,\alpha\,p'$ where $\alpha\colon y_0\to x$ is its first entry into $x$ (so $y_0\neq x$ is an external in-neighbour), $p'\colon y\to y_0$, and $s\in\Gamma$. For $q\in e_y\La e_x$ then $pq=s\alpha(p'q)$ with $p'q\in e_{y_0}\La e_x=y_0\La x=0$. A cycle at $x$ through $y\neq x$ splits as $ab$ with $a\in e_x\La e_y$, $b\in e_y\La e_x$, hence vanishes; so $\Gamma$ is spanned by loop words, and $\Gamma\neq k$ forces a loop. For the last claim let $\gamma\neq0$ lie in the right socle $\{\gamma:\gamma\,\rad\Gamma=0\}$ of $\Gamma$; every $v\in\rad\Gamma\cdot F$ ($F$ free) satisfies $\gamma v=0$, so no nonzero submodule of $\rad\Gamma\cdot F$ has a free direct summand; since every syzygy of $k$ embeds in the radical of a free module, no syzygy is projective, and $k$ itself is projective only if $\Gamma=k$.
\end{proof}

\begin{theorem}[Protected corner]\label{thm:protectedcorner}
Let $\La=kQ/I$ be elementary and $x\in Q_0$ with $\Gamma:=e_x\La e_x\neq k$ and $y\La x=0$ for every external in-neighbour $y$ of $x$. Then
\[
\dim_k\Ext^n_\La(S_x,S_x)\;\geq\;\dim_k\Ext^n_\Gamma(k,k)\;\geq\;1\qquad\text{for all }n\geq1 .
\]
In particular, the extension conjecture holds at $x$ and $\La$ is of infinite $+$ global dimension.
\end{theorem}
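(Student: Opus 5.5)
Let $\Lambda_0:=\La/\La(1-e_x)\La$ denote the quotient by the ideal generated by the idempotents at vertices other than $x$. The plan is to show that the minimal $\La$-resolution of $S_x$ "contains" the minimal $\Gamma$-resolution of $k$, using Lemma~\ref{lem:throughcycles} to kill every interaction between $x$ and the other vertices that could feed back into the diagonal.

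First, by Lemma~\ref{lem:throughcycles}, $e_x\La e_y\cdot e_y\La e_x=0$ for all $y\neq x$. Hence the two-sided ideal $J:=\La(1-e_x)\La$ satisfies $e_xJe_x=\sum_{y\neq x}e_x\La e_y\La e_x=0$. So $e_x\Lambda_0e_x=\Gamma$, and $\Lambda_0$ is simply $\Gamma$ (the quotient only has vertex $x$). Also, $\Gamma$ is generated by loops, and $\operatorname{pd}_\Gamma k=\infty$. The latter gives $\Ext^n_\Gamma(k,k)\neq0$ for all $n$, since $\Gamma$ is local and the minimal resolution never stops; this yields the second inequality.

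For the first inequality I would use the ring surjection $\pi\colon\La\to\La/J=\Gamma$ together with a change-of-rings comparison. The natural tool is the restriction map $\Ext^n_\Gamma(k,k)\to\Ext^n_\La(S_x,S_x)$ induced by $\pi$; what must be shown is that it is injective. This holds as soon as $\Tor^\La_i(\Gamma,S_x)=0$ for $i\geq1$, i.e.\ $J$ is "homologically invisible" from $S_x$, because then a base-change spectral sequence collapses to $\Ext_\La(S_x,S_x)\cong\Ext_\Gamma(k,k)$. Equality is too strong in general, since other vertices contribute. So I would instead argue at the level of resolutions.

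Apply the exact functor $F=e_x(-)$ to the minimal resolution $Q_\bullet\to S_x$, as in Theorem~\ref{thm:cornerproj}. Then $F(P_x)=\Gamma$ is free. For $y\neq x$, the module $F(P_y)=e_x\La e_y$ need not be projective, so Theorem~\ref{thm:cornerproj} does not apply directly. The fix is to pass further to the quotient $\bar F(M):=e_xM/e_xJM$, i.e.\ $\Gamma\otimes_\La M$. Then $\bar F(P_y)=0$ for $y\neq x$ and $\bar F(P_x)=\Gamma$, so $\Gamma\otimes_\La Q_\bullet$ is a complex of free $\Gamma$-modules $\Gamma^{C^{(n)}_{xx}}$ with differentials in $\rad\Gamma$ (minimality). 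The key claim is that this complex is exact in positive degrees, i.e.\ $\Tor^\La_i(\Gamma,S_x)=0$ for $i\geq1$. Once it holds, it is a minimal free resolution of $\Gamma\otimes_\La S_x=k$, and its ranks are exactly $\dim\Ext^n_\Gamma(k,k)=C^{(n)}_{xx}$, giving even equality, which is stronger than needed.

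The main obstacle is this Tor vanishing. I would compute it from the right: resolve $\Gamma=\La/J$ as a right $\La$-module. Here $J e_x=\bigoplus_{y\neq x}\La e_y\La e_x$, and $e_x(Je_x)=0$ by the lemma. The hypothesis says that the right modules $e_x\La e_y$ never return, so $J$ restricted along $x$ lives only on vertices $\neq x$. Tensoring with $S_x$ only sees the $x$-component, so $J\otimes_\La S_x$ should vanish together with its higher derived functors. If this direct route stalls, the fallback is to make the protected-loop induction of Theorem~\ref{thm:protectedloop} rigorous. The tool for that is the "$\Gamma$-block" bookkeeping with Lemma~\ref{lem:throughcycles}, which closes the channel identified in Remark~\ref{rem:gapfix}. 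Run with a minimal $\Gamma$-free resolution in place of the single thread $\alpha^{s_n}$, it shows that the $\Gamma$-blocks of the copies of $P_x$ carry a copy of the minimal $\Gamma$-resolution of $k$ whose generators remain minimal in $\Omega^n(S_x)$. The final claims then follow: infinite $+$ from Theorem~\ref{thm:criterion}(5), since $(S_n)_{xx}\geq C^{(n)}_{xx}\geq1$, and the extension conjecture at $x$ directly.
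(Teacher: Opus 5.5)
\textbf{There is a genuine gap: the load-bearing step of your main route is false.} Your identification $\La/J\cong\Gamma$ (from $e_xJe_x=0$) and the second inequality are fine.

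\emph{The main route.} Exactness of $\Gamma\otimes_\La Q_\bullet$ in positive degrees would give the \emph{equality} $\dim\Ext^n_\La(S_x,S_x)=\dim\Ext^n_\Gamma(k,k)$, which you yourself call too strong a paragraph earlier. Equality does fail under the protection hypothesis. Take vertices $x,y,z$, arrows $\alpha\colon x\to x$, $a\colon x\to y$, $p\colon y\to z$, $\beta\colon z\to x$, and $I=\langle\alpha^2,\,pa,\,\beta p\rangle$. Then $\Gamma=k[\alpha]/(\alpha^2)$, the only external in-neighbour of $x$ is $z$, and $z\La x=0$ because $pa=0$, so the theorem applies. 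Here $P_x$ has basis $e_x,\alpha,a,a\alpha$ and $P_y$ has basis $e_y,p$. One finds:
\begin{itemize}
\item $\Omega^1(S_x)=\rad P_x$, generated by $\alpha$ and $a$;
\item $\Omega^2(S_x)\cong\La\alpha\oplus S_z$;
\item $\Omega^3(S_x)\cong\La\alpha\oplus\rad P_z$, with $\rad P_z=\La\beta\cong P_x$.
\end{itemize}
Thus $\operatorname{top}\Omega^3(S_x)\cong S_x^2$ and $\dim\Ext^3_\La(S_x,S_x)=2>1=\dim\Ext^3_\Gamma(k,k)$. Correspondingly $H_3(\Gamma\otimes_\La Q_\bullet)\cong\Gamma\neq0$.

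Your heuristic only controls $\Tor_0^\La(J,S_x)$. Protection does give $\Tor_1^\La(\La/J,S_x)=\Tor_2^\La(\La/J,S_x)=0$; the latter holds because $e_x\La f\otimes_B f\La e_x=0$, where $f=1-e_x$ and $B=f\La f$. However, $\Tor_3^\La(\La/J,S_x)$ is detected by $\Tor_1^B(e_x\La f,f\La e_x)$, i.e.\ by whether $J$ is a stratifying ideal, and nothing forces that. In the example, the relation $\beta p=0$ puts a top at $y$ into the $B$-syzygy of $e_x\La f$, and this meets the top of $f\La e_x$ given by $a$.

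\emph{The fallback.} This is the paper's route, but you assert its conclusion instead of proving it, and that is where all the work lies. One needs an induction on $n$ with three clauses:
\begin{itemize}
\item $\Omega^n(S_x)$ has minimal generators $\tau_1,\dots,\tau_{r_n}$ supported purely on the $\Gamma$-slots $e_xP_x$ of the $P_x$-copies over the previous thread, and their slot tuples minimally generate the $n$-th $\Gamma$-syzygy $Z_n$ of $k$;
\item every other minimal generator can be normalized to vanish on those slots;
\item every element of $\Omega^n(S_x)$ has its slot tuple in $Z_n$.
\end{itemize}
The last clause, and the fact that the next candidates lie in the kernel, come from computing the slot part of the cover map. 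There Lemma~\ref{lem:throughcycles} ($e_x\La e_y\cdot e_y\La e_x=0$) kills the contributions of $y\La x$-coordinates, and the normalization kills those of other generators at $x$. Minimality of the candidates uses protection directly. The only arrows creating $e_x$-content in $\rad\La\cdot\Omega^{n+1}$ are loops, which push slot tuples into $\rad\Gamma\cdot Z_{n+1}$, and arrows from external in-neighbours $y$, which act on coordinates lying in $y\La x=0$.

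This thread argument yields exactly the inequality and nothing more, which is all that is true. In the example above, the second $x$-top of $\Omega^3(S_x)$ comes from $\rad P_z$, off the thread.
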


\begin{proof}
Fix a minimal free resolution $\cdots\to\Gamma^{r_2}\to\Gamma^{r_1}\to\Gamma\to k$, so $r_n=\dim\Ext^n_\Gamma(k,k)\geq1$ (Lemma~\ref{lem:throughcycles}); put $Z_1=\rad\Gamma$ and $Z_{n+1}=\ker(\Gamma^{r_n}\to Z_n)$, so $Z_n$ has minimal generator number $r_n$. For a minimal generating set $G$ of $\Omega^{n-1}$ the cover is $A_n=\bigoplus_{g\in G}P_{v(g)}$ with copy-diagonal actions; for a $P_x$-copy $C$ call $e_xC=\Gamma$ its \emph{$\Gamma$-slot}. Three observations, the second and third via Lemma~\ref{lem:throughcycles}: (O1) a generator at a vertex $y\neq x$ has zero $\Gamma$-slot coordinates; (O2) for $q\in P_x$, the $\Gamma$-slot part of $q\cdot w$ is $(e_xq)\gamma$ if $w=\gamma$ lies in the $\Gamma$-slot, and lies in $(x\La y)(y\La x)=0$ if $w$ lies in the $y\La x$-part, $y\neq x$; (O3) since $\rad M=\sum_{\text{arrows}\,a}aM$, the only arrows producing $e_x$-content are loops at $x$, which multiply $\Gamma$-slot coordinates into $\rad\Gamma\cdot(-)$, and arrows $\beta\colon y\to x$ from external in-neighbours, which act on $e_y$-elements whose coordinates on $P_x$-copies lie in $y\La x=0$.

We prove by induction: \emph{$\Omega^n$ has a minimal generating set $G_n$ containing $K_n=\{\tau_1,\dots,\tau_{r_n}\}$ such that (a) each $\tau_j$ is supported on the $\Gamma$-slots of the copies over $K_{n-1}$ (over the canonical generator of $A_1=P_x$ when $n=1$), the tuples $t(\tau_j)\in\Gamma^{r_{n-1}}$ minimally generating $Z_n$; (b) every other element of $G_n$ has zero coordinates on those slots; (c) for every $z\in\Omega^n$ the tuple $t(z)$ of its $\Gamma$-slot coordinates over the $K_{n-1}$-copies lies in $Z_n$.}

\emph{Base.} $\Omega^1=\rad P_x$. The loops exist, are minimal generators (arrows avoid $\rad^2\La$), and their classes minimally generate $\rad\Gamma=Z_1$ because loop words of length $\geq2$ lie in $\rad^2$; (c) is $e_x\rad P_x=\rad\Gamma$. For (b), a generator $h$ with $\Gamma$-slot coordinate $p=\sum q_j\bar\ell_j\in Z_1$ is replaced by $h-\sum q_j\tau_j$; collectively this is a unitriangular change of basis of the top, so minimal generation persists.

\emph{Step.} Let $\varphi\colon A_{n+1}\to\Omega^n$ be the cover and for $z\in A_{n+1}$ let $s(z)\in\Gamma^{r_n}$ collect the $e_x$-parts of its coordinates over the $K_n$-copies. By (O1), (O2) and clause (b) at level $n$, the $K_{n-1}$-$\Gamma$-slot tuple of $\varphi(z)$ equals the pairing of $s(z)$ against the generators $t(\tau_j)$ of $Z_n$; hence $z\in\ker\varphi$ implies $s(z)\in Z_{n+1}$, which is (c). Choose $s^{(1)},\dots,s^{(r_{n+1})}$ minimally generating $Z_{n+1}$ and let $\tau'_j\in A_{n+1}$ carry exactly these $e_x$-coordinates over the $K_n$-copies and zero elsewhere. Since the coordinates lie in $\Gamma$ and each $\tau_t$ is pure, $\varphi(\tau'_j)$ is supported on the $K_{n-1}$-$\Gamma$-slots with tuple the $Z_n$-relation, i.e.\ $\varphi(\tau'_j)=0$: so $\tau'_j\in\Omega^{n+1}$. By (O3) and (c), every element of $\rad\La\cdot\Omega^{n+1}$ has $s$-tuple in $\rad\Gamma\cdot Z_{n+1}$; minimal generators of $Z_{n+1}$ avoid $\rad\Gamma\cdot Z_{n+1}$, so each $\tau'_j$ is a minimal generator, and $\sum c_j\tau'_j\in\rad\Omega^{n+1}$ forces $c_j=0$, so their top classes are independent. Complete to a minimal generating set and normalize the remaining generators as in the base to secure (b). Each $\tau\in K_n$ is a minimal generator at vertex $x$, whence $\dim\Ext^n_\La(S_x,S_x)\geq r_n$.
\end{proof}

\begin{remark}[Relation to the extension conjecture]\label{rem:hanext}
Under the protection hypothesis, Lemma~\ref{lem:throughcycles} makes $\Gamma_x$ loop-generated, so $\Gamma_x\neq k$ forces $\Ext^1(S_x,S_x)\neq0$: Theorem~\ref{thm:protectedcorner} thus furnishes new instances of the extension conjecture of Liu--Morin \cite{LiuMorin,ILP}, with the stronger conclusion of nonvanishing in every degree and the explicit lower bound $\dim\Ext^n_{\Gamma_x}(k,k)$, beyond the known monomial \cite{GSZ} and special biserial \cite{LiuMorin} cases. Han's preprint \cite{HanExt} proposes a proof of the full extension conjecture for elementary algebras by recollements and dg methods; it appears to have remained unpublished, and \cite{CLMS} treat the conjecture as open. The results below are independent of it. If its main theorem is granted, then no vertex of any Gap-A failure carries a loop, a loop gives $\Ext^1(S_z,S_z)\neq0$, hence diagonal Ext in infinitely many degrees, hence $b_n\neq0$ infinitely often, which eliminates the all-infinite configuration a second time and forces the corners of the dumbbell of Theorem~\ref{thm:trichotomy}(2) to be generated by through-cycles.
\end{remark}

\begin{theorem}[$v=3$: the all-infinite case is impossible]\label{thm:trichotomy}
Let $\La$ be a failure of infinite $+$ with $Q$ strongly connected, $|Q_0|=3$, and let $P_\infty$ be the set of vertices whose simple has infinite projective dimension. Then $1\leq|P_\infty|\leq2$. Moreover:
\begin{enumerate}
\item $|P_\infty|=3$ cannot occur: the configuration of Theorem~\ref{thm:allinfinite} does not exist. In particular Corollary~\ref{cor:noloopstratum}(1) is superseded, not merely single-loop corners but the entire stratum is empty.
\item If $P_\infty=\{x,z\}$ then, in the notation of Theorem~\ref{thm:concentration} at $x$, necessarily $z=w$ and the configuration is the \emph{mutual dumbbell}: the arrows between distinct vertices are exactly $u\to x$, $x\to u$, $u\to w$, $w\to u$ (with multiplicities); $x\La w=w\La x=0$; $T_n(x)=\{w\}$ and $T_n(w)=\{x\}$ for $n\gg0$; $\Gamma_x\neq k\neq\Gamma_w$; $u$ carries no loop and $\pd S_u<\infty$; and $u\La w$, $u\La x$ are Tor-rigid against the respective tails as in Theorem~\ref{thm:tail}.
\end{enumerate}
\end{theorem}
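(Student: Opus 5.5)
Since $\gldim\La=\infty$ for a failure of infinite $+$ (a failure in this section's standing sense has some $\pd S_x=\infty$), $P_\infty\neq\emptyset$, so only the two numbered claims need argument.

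\emph{Step 1: $|P_\infty|=3$ is impossible.} If all three simples have infinite projective dimension, Theorem~\ref{thm:allinfinite} applies. The arrows between distinct vertices form the cycle $1\to2\to3\to1$, and the unique external in-neighbour of $i$ is $i-1$. Moreover $(i-1)\La i=0$ is one of the forced Peirce deaths, since $i\La(i+1)=0$ cyclically. Thus the protection hypothesis of Theorem~\ref{thm:protectedcorner} holds at every vertex. That theorem also needs $\Gamma_i\neq k$, which Theorem~\ref{thm:allinfinite} supplies via Theorem~\ref{thm:concentration}(4). Theorem~\ref{thm:protectedcorner} then gives $\Ext^n_\La(S_i,S_i)\neq0$ for all $n\geq1$, i.e.\ $i\in T_n(i)$. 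But Proposition~\ref{prop:anatomy} requires $T_n(i)\subseteq Z(i)\not\ni i$ for $n\geq N$, a contradiction. This supersedes Corollary~\ref{cor:noloopstratum}(1), because no assumption on the number of loops was used.

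\emph{Step 2: the two-infinite case.} Let $P_\infty=\{x,z\}$ and apply Theorem~\ref{thm:concentration} at $x$, obtaining $u$ and $w$. Since $T_n(x)=\{w\}$ for $n\geq N$, $\Ext^n(S_x,S_w)\neq0$ for $n\gg0$. This forces $\pd S_w=\infty$: the tops of $\Omega^n(S_x)$ at $w$ generate summands $P_w$ whose syzygies continue forever. Concretely, for $n\ge N$ the tail of the resolution of $S_x$ is $\La e_w\otimes_\Gamma F_\bullet$, and $\Omega^{n}(S_x)$ is a syzygy of a module whose top lies at $w$, so it is built from $S_w$-pieces. To make this clean, I would argue by contradiction: if $\pd S_w<\infty$ and $\pd S_u<\infty$, then every module with composition factors only at $u,w$ has finite projective dimension. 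The high syzygies of $S_x$ have no top at $x$, but they may still have composition factors at $x$, so this needs care; I expect this to be the main obstacle. Failing that, I would use the alternative that $P_\infty\setminus\{x\}$ nonempty with $z=u$ leads to a contradiction. Apply Theorem~\ref{thm:concentration} at $u$: then $u$ has a unique external in-neighbour and $Z(u)$ is a single vertex. Since $w\to u$ and $x$ reach $u$ ($u$ is the in-neighbour of $x$, and strong connectivity gives a path $x\to u$), a case check on $Z(u)\in\{x,w\}$ yields a contradiction with $u\La w\neq0$, or with $x$'s concentration data. Hence $z=w$.

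\emph{Step 3: shape of the dumbbell.} With $x,w\in P_\infty$, Theorem~\ref{thm:concentration} at $w$ gives a unique external in-neighbour of $w$ and $Z(w)$ a singleton. Since $x\La w=0$ and $w\La x=0$ must be symmetric in the roles (the in-neighbour of $w$ cannot be $x$, as $x\La w=0$ while arrows survive), the in-neighbour is $u$ and $Z(w)=\{x\}$, i.e.\ $w\La x=0$. This leaves the arrows $u\to x$, $u\to w$, plus the out-arrows $x\to u$ and $w\to u$ needed for strong connectivity; no arrows $x\leftrightarrow w$ occur. Theorem~\ref{thm:concentration}(1),(4) at both vertices give $T_n(x)=\{w\}$, $T_n(w)=\{x\}$ and $\Gamma_x\neq k\neq\Gamma_w$. $\pd S_u<\infty$ holds because $u\notin P_\infty$. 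The absence of a loop at $u$ follows from the strong no-loop theorem \cite{ILP} (Remark~\ref{prob:loop}). Tor-rigidity of $u\La w$ and $u\La x$ is Theorem~\ref{thm:tail}(2), applied at $x$ and at $w$.
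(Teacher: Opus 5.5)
Step~1 matches the paper, and the closing bookkeeping of Step~3 is fine (arrows, $T_n$, $\Gamma_x\neq k\neq\Gamma_w$, no loop at $u$ by the strong no-loop theorem, Tor-rigidity by Theorem~\ref{thm:tail}(2)). However, both exclusions in the two-infinite case have genuine gaps, and each is closed in the paper by Theorem~\ref{thm:protectedcorner}.

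\textbf{The case $z=u$.} You correctly obtain $Z(u)=\{x\}$, i.e.\ $u\La x=0$. But this contradicts nothing in Theorem~\ref{thm:concentration} at $x$. Strong connectivity only supplies $Q$-paths $x\to u$, and these may all die in $I$. For instance, the quiver could be the $3$-cycle $u\to x\to w\to u$ with loops, with $x\La w=u\La x=0$; this satisfies the concentration data at both $x$ and $u$. The actual contradiction runs as follows:
\begin{itemize}
\item $u$ is the unique external in-neighbour of $x$ and $u\La x=0$, which is exactly the protection hypothesis at $x$;
\item $\Gamma_x\neq k$ by Theorem~\ref{thm:concentration}(4) applied at $u$, whose dead zone is $x$;
\item Theorem~\ref{thm:protectedcorner} then gives $\Ext^n(S_x,S_x)\neq0$ for all $n$, against Proposition~\ref{prop:anatomy}.
\end{itemize}

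\textbf{The in-neighbour of $w$ in Step~3.} Your reason for excluding $x$ reverses a direction. An arrow $x\to w$ lies in $w\La x$, whereas $x\La w=0$ only says that paths $w\to x$ die, so there is no conflict. Also, ``must be symmetric'' presupposes $w\La x=0$, which is what is being proved. If $x$ were the in-neighbour of $w$, Theorem~\ref{thm:concentration} at $w$ would give $Z(w)=\{u\}$, $T_n(w)=\{u\}$ and $\operatorname{inj.dim}S_u=\infty$. None of these visibly clashes with $\pd S_u<\infty$. The paper closes this case with the protected corner again:
\begin{itemize}
\item $x\La w=0$ is precisely the protection hypothesis at $w$;
\item $\Gamma_w\neq k$ by Theorem~\ref{thm:concentration}(4) at $x$;
\item Theorem~\ref{thm:protectedcorner} at $w$ then contradicts the one-way law.
\end{itemize}

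\textbf{The first route of Step~2.} Drop it. $\Ext^n(S_x,S_w)\neq0$ for $n\gg0$ yields $\operatorname{inj.dim}S_w=\infty$ (Theorem~\ref{thm:concentration}(3)), not $\pd S_w=\infty$. Moreover, an argument showing that $\pd S_x=\infty$ always forces $\pd S_w=\infty$ would eliminate the single-infinite configuration, which the paper leaves open.
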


\begin{proof}
$P_\infty\neq\emptyset$ since $\gldim\La=\infty$. (1) In the all-infinite configuration, Theorem~\ref{thm:allinfinite} forces at each vertex $i$: $\Gamma_i\neq k$, unique external in-neighbour $i-1$, and $(i-1)\La i=0$ among the Peirce deaths. These are exactly the hypotheses of Theorem~\ref{thm:protectedcorner} at $i$, whose conclusion $\Ext^n(S_i,S_i)\neq0$ for all $n$ contradicts Proposition~\ref{prop:anatomy}. (2) Let $P_\infty=\{x,z\}$. If $z=u$: an arrow $x\to u$ would give $u$ two external in-neighbours (with $w\to u$ from Theorem~\ref{thm:concentration}(2) at $x$), forcing $Z(u)=\emptyset$ against $\pd S_u=\infty$; so $\operatorname{in}(u)=\{w\}$, $Z(u)=\{x\}$, $u\La x=0$, and $\Gamma_x=\Gamma_{\sigma(u)}\neq k$ by Theorem~\ref{thm:concentration}(4) at $u$. Theorem~\ref{thm:protectedcorner} fires at $x$: contradiction. If $z=w$ and the arrows into $w$ come from $x$: then $x\La w=0$ (the dead zone of $x$) is precisely the protection hypothesis at $w$, and $\Gamma_w=\Gamma_{\sigma(x)}\neq k$ by Theorem~\ref{thm:concentration}(4) at $x$; the theorem fires at $w$: contradiction. Hence the arrows into $w$ come from $u$ alone; an arrow $x\to w$ is then excluded, and strong connectivity forces $x\to u$. Theorem~\ref{thm:concentration} at $w$ now gives $Z(w)=\{x\}$, $w\La x=0$, and $\Gamma_x\neq k$; at $x$ it gives $\Gamma_w\neq k$; the strong no-loop theorem \cite{ILP} forbids loops at $u$ since $\pd S_u<\infty$; Tor-rigidity is Theorem~\ref{thm:tail}(2) applied at $x$ and at $w$.
\end{proof}

\begin{remark}[The $v=3$ frontier after this section]\label{rem:frontier2}
Gap A for three strongly connected vertices now rests on exactly two configurations. \emph{(i) One infinite vertex}: the shape of Theorem~\ref{thm:concentration}, with the contrapositive of Theorem~\ref{thm:protectedcorner} adding: either $\Gamma_x=k$ or $u\La x\neq0$; and, since $\Gamma_w\neq k$ is forced, some in-neighbour $y$ of $w$ has $y\La w\neq0$. \emph{(ii) The mutual dumbbell} of Theorem~\ref{thm:trichotomy}(2). Observe that the dumbbell contains a simple of finite projective dimension at a loop-free hub, so the Hattori--Stallings trace methods of Igusa and Igusa--Liu--Paquette \cite{Igusa90,ILP}, which require finite projective resolutions to exist, become available; the all-infinite case never offered this. On the computational side, exact-arithmetic minimal-resolution computation extends to multi-loop corners and to both configurations; this is the immediate next step.
\end{remark}

\subsection*{Two consequences of the protected corner}

The protected corner theorem yields, on the one hand, a sufficient criterion for Gap A parallel to the corner-projectivity criterion of Corollary~\ref{cor:cornerscc} but resting on the \emph{dead-return} hypothesis rather than on projectivity, and, on the other hand, a quantitative lower bound on the Betti numbers of $S_x$ that is already exponential at a two-loop corner.

\begin{corollary}[Protected-corner criterion for Gap A]\label{cor:protectedgapA}
Gap A holds for $\La=kQ/I$ whenever every strongly connected component $\CC$ of $Q$ with $\gldim\La_\CC=\infty$ contains a vertex $x$ with $\Gamma_x=e_x\La_\CC e_x\neq k$ such that $y\La_\CC x=0$ for every in-neighbour $y\neq x$ of $x$ in $Q_\CC$, that is, no path of $\La_\CC$ returns from $x$ to a vertex pointing at $x$. Such a vertex satisfies the extension conjecture inside $\La_\CC$ with the explicit bound
$\dim\Ext^n_{\La_\CC}(S_x,S_x)\geq\dim\Ext^n_{\Gamma_x}(k,k)\geq1$ of Theorem~\ref{thm:protectedcorner}, so $\La_\CC$ is of infinite $+$ global dimension and Theorem~\ref{thm:scc} lifts this to $\La$.
\end{corollary}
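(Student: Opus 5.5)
The plan is to assemble Corollary~\ref{cor:protectedgapA} from three earlier results: Theorem~\ref{thm:protectedcorner} applied inside each corner algebra $\La_\CC$, then Theorem~\ref{thm:criterion}, then Theorem~\ref{thm:scc}(3). Nothing new needs to be proved; the only real care is in checking that the hypotheses, phrased inside $\La_\CC$, are the ones Theorem~\ref{thm:protectedcorner} requires for the bound quiver algebra $\La_\CC$.

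\textbf{Step 1: identify $\La_\CC$ as an elementary algebra on $Q_\CC$.} By Lemma~\ref{lem:scc-corner}, every path between vertices of $\CC$ stays in $\CC$. Hence $\La_\CC\cong kQ_\CC/I_\CC$ is elementary on the full subquiver $Q_\CC$. Two consequences follow:
\begin{itemize}
\item the external in-neighbours of $x$ in $Q_\CC$ are exactly the in-neighbours $y\neq x$ of $x$ lying in $\CC$;
\item for $x,y\in\CC$ we have $y\La_\CC x=y\La x$ and $e_x\La_\CC e_x=e_x\La e_x$.
\end{itemize}
The hypothesis of the corollary therefore says precisely that, inside $\La_\CC$, the corner at $x$ is nontrivial and $y\La_\CC x=0$ for every external in-neighbour $y$. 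This is the protection hypothesis of Theorem~\ref{thm:protectedcorner} for $\La_\CC$.

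In-neighbours of $x$ from outside $\CC$ play no role here, because they are not vertices of $Q_\CC$. This is the one point I would state explicitly, since the protection hypothesis is only required relative to $Q_\CC$.

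\textbf{Step 2: apply Theorem~\ref{thm:protectedcorner} to $\La_\CC$.} This gives
\[
\dim\Ext^n_{\La_\CC}(S_x,S_x)\geq\dim\Ext^n_{\Gamma_x}(k,k)\geq1\quad\text{for all } n\geq1.
\]
By Remark~\ref{rem:extconj}, $(S_n)_{xx}\geq C^{(n)}_{xx}\geq1$ in $\La_\CC$ for every $n$. Condition (5) of Theorem~\ref{thm:criterion} then shows that $\La_\CC$ is of infinite $+$ global dimension.

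\textbf{Step 3: lift to $\La$.} Step 2 applies to every SCC $\CC$ with $\gldim\La_\CC=\infty$. Theorem~\ref{thm:scc}(3) then yields Gap A for $\La$. If one also wants $\La$ itself to be infinite $+$ whenever some such $\CC$ exists, this is Theorem~\ref{thm:scc}(2).

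The main obstacle is only the bookkeeping in Step 1: making sure that the vanishing $y\La_\CC x=0$ and the equality of corners transfer correctly between $\La$ and $\La_\CC$. Lemma~\ref{lem:scc-corner} settles both.
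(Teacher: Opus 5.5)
Your proposal is correct and follows the paper's proof. It reduces via Theorem~\ref{thm:scc}(3) to showing that each $\La_\CC$ of infinite global dimension is infinite $+$, uses Lemma~\ref{lem:scc-corner} to see that $\La_\CC\cong kQ_\CC/I_\CC$ is elementary with $x$ protected inside it, and then applies Theorem~\ref{thm:protectedcorner}. Your detour through Remark~\ref{rem:extconj} and Theorem~\ref{thm:criterion}(5) only re-derives the infinite $+$ conclusion that Theorem~\ref{thm:protectedcorner} already states.
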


\begin{proof}
By Theorem~\ref{thm:scc}(3) it suffices that every $\La_\CC$ of infinite global dimension be of infinite $+$ global dimension. Since $\La_\CC\cong kQ_\CC/I_\CC$ is elementary (Lemma~\ref{lem:scc-corner}) and the displayed vertex is protected inside it, this is Theorem~\ref{thm:protectedcorner} applied to $\La_\CC$.
\end{proof}

This is genuinely disjoint from Corollary~\ref{cor:cornerscc}: there the corner algebra must see $e_x\La e_y$ \emph{projective} over $\Gamma_x$ for the relevant $y$, here it must see the return maps $y\La x$ \emph{die}. Neither hypothesis implies the other; both force the same conclusion through the syzygy-return criterion.

\begin{corollary}[Exponential Betti growth at a radical-square-zero corner]\label{cor:expbetti}
Let $x$ be protected as in Theorem~\ref{thm:protectedcorner} ($\Gamma_x\neq k$, and $y\La x=0$ for every external in-neighbour $y$), and suppose $\rad^2\Gamma_x=0$; write $\ell:=\dim_k\rad\Gamma_x$, the number of loops at $x$. Then
\[
\dim_k\Ext^n_\La(S_x,S_x)\;\geq\;\ell^{\,n}\qquad\text{for all }n\geq0 .
\]
In particular, at a protected corner with $\ell\geq2$ loops and all length-two loop-words zero, the Betti numbers of $S_x$ grow at least exponentially with base $\ell$; a single such vertex already forces $\gldim\La=\infty$ with exponentially many extensions in every degree.
\end{corollary}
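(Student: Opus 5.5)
The plan is to combine the lower bound of Theorem~\ref{thm:protectedcorner} with a direct computation of $\Ext^n_\Gamma(k,k)$ for a radical-square-zero local algebra $\Gamma=\Gamma_x$. Theorem~\ref{thm:protectedcorner} already gives $\dim\Ext^n_\La(S_x,S_x)\geq\dim\Ext^n_\Gamma(k,k)=r_n$, the $n$-th Betti number of the minimal free resolution of $k$ over $\Gamma$. So it suffices to show $r_n=\ell^n$ when $\rad^2\Gamma=0$ and $\dim\rad\Gamma=\ell$. The case $n=0$ is trivial: $\Ext^0_\La(S_x,S_x)=k$ gives the bound $1=\ell^0$.

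For the corner computation, note that by Lemma~\ref{lem:throughcycles} the corner $\Gamma$ is generated by the loops at $x$, and $\rad^2\Gamma=0$ forces $\Gamma\cong k\oplus\rad\Gamma$ with $\rad\Gamma=\bigoplus_{j=1}^{\ell}k\bar\ell_j$ a semisimple module, namely $k^{\ell}$. Hence $Z_1=\rad\Gamma\cong k^{\ell}$. Inductively, if $Z_n\cong k^{\ell^n}$ as a left $\Gamma$-module, then its projective cover is $\Gamma^{\ell^n}$, and the kernel is $\rad\Gamma\cdot\Gamma^{\ell^n}=(\rad\Gamma)^{\ell^n}\cong k^{\ell\cdot\ell^n}$. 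This uses the fact that the cover $\Gamma^{\ell^n}\to k^{\ell^n}$ has kernel exactly the radical, by dimension count. So $Z_{n+1}\cong k^{\ell^{n+1}}$, and the number of minimal generators is $r_{n+1}=\ell^{n+1}$. Equivalently, the minimal resolution has $\Omega^n_\Gamma(k)$ semisimple for $n\geq1$, and $r_n=\dim Z_n/\rad\Gamma Z_n=\dim Z_n=\ell^n$.

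The only point needing care is that Theorem~\ref{thm:protectedcorner} gives the bound via $r_n$, which is the minimal generator number of $Z_n$, the same quantity computed above. The identification $\Gamma_x=e_x\La e_x$ with $\rad\Gamma=e_x(\rad\La)e_x$ of dimension $\ell$ requires that the loops be linearly independent in $\Gamma$, i.e.\ that $\dim\rad\Gamma$ equal the number of loops. This holds because arrows are independent modulo $\rad^2\La$, and since $\rad^2\Gamma=0$ the radical is spanned by the loops, so $\ell$ is both the loop count and $\dim\rad\Gamma$. This identification is the main obstacle.

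Finally, for $\ell\geq2$ the bound $\ell^n$ is exponential, and since the bound is at least $1$ in every degree, $\pd S_x=\infty$ and therefore $\gldim\La=\infty$.
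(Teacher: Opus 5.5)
Your proposal is correct and follows essentially the paper's proof: you invoke Theorem~\ref{thm:protectedcorner}, then compute $\dim\Ext^n_{\Gamma_x}(k,k)=\ell^n$ by noting that every syzygy in the minimal $\Gamma_x$-resolution of $k$ is semisimple, $\Omega^{n+1}=(\rad\Gamma_x)^{\oplus d}\cong k^{\ell d}$. Your explicit treatment of $n=0$ is a harmless addition. The loop-count identification you flag as the main obstacle is routine, and the inequality itself needs only $\rad^2\Gamma_x=0$, which already makes $\rad\Gamma_x$ semisimple.
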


\begin{proof}
By Lemma~\ref{lem:throughcycles} the protection hypothesis makes $\Gamma_x$ loop-generated, so with $\rad^2\Gamma_x=0$ one has $\Gamma_x=k\oplus\rad\Gamma_x$ with $\rad\Gamma_x$ spanned by the $\ell$ loops and annihilated by $\rad\Gamma_x$; thus $\rad\Gamma_x\cong k^{\ell}$ is semisimple over $\Gamma_x$. In the minimal $\Gamma_x$-resolution of $k$ every syzygy is semisimple: $\Omega^0=k$, and if $\Omega^n\cong k^{d}$ then its projective cover is $\Gamma_x^{d}$ with kernel $\Omega^{n+1}=\rad\Gamma_x\cdot\Gamma_x^{d}=(\rad\Gamma_x)^{\oplus d}\cong k^{\ell d}$. Hence $\dim_k\Omega^n_{\Gamma_x}(k)=\ell^{n}$ and, the syzygies being semisimple, $\dim_k\Ext^n_{\Gamma_x}(k,k)=\ell^{n}$ (equivalently $\Ext^\bullet_{\Gamma_x}(k,k)$ is the tensor algebra on $(\rad\Gamma_x)^\ast$). Theorem~\ref{thm:protectedcorner} gives $\dim\Ext^n_\La(S_x,S_x)\geq\dim\Ext^n_{\Gamma_x}(k,k)=\ell^{n}$.
\end{proof}

\begin{example}[A protected two-loop corner attaining the exponential bound]\label{ex:twoloop}
Let $Q$ have vertices $x,u$, two loops $\alpha,\beta$ at $x$ and one arrow $c\colon u\to x$, and set
\[
\La=kQ\big/\big\langle\alpha^2,\ \beta^2,\ \alpha\beta,\ \beta\alpha\big\rangle,\qquad \dim_k\La=7 .
\]
Then $\Gamma_x=e_x\La e_x=k\langle\alpha,\beta\rangle/(\alpha,\beta)^2$ is the three-dimensional radical-square-zero corner ($\ell=2$), and there is no path $x\to u$, so $u\La x=0$ and $x$ is protected; three arrows $\alpha,\beta,c$ enter $x$, so $\La$ is not special biserial (a special biserial algebra has at most two arrows entering, and two leaving, each vertex). Corollary~\ref{cor:expbetti} gives $\dim\Ext^n_\La(S_x,S_x)\geq2^n$. This is the non-uniserial counterpart of the uniserial single-loop corners of $\La(m;d)$, where $\dim\Ext^n_{\Gamma_x}(k,k)=1$ and the growth of $\dim\Ext^n_\La(S_x,S_x)$ is only linear (Remark~\ref{rem:certificate}).
\end{example}

\section{Hattori--Stallings traces on the dumbbell}\label{sec:dumbbell}

Throughout this section $\La$ is a dumbbell failure as in Theorem~\ref{thm:trichotomy}(2), with threshold $N$, hub $u$, infinite vertices $x,w$, and
\[
p=\dim x\La u,\quad q=\dim u\La x,\quad r=\dim u\La w,\quad s=\dim w\La u,\quad
C_x:=u\La x\cdot x\La u,\quad C_w:=u\La w\cdot w\La u .
\]
All four hub numbers are $\geq1$ and $\Gamma_x\neq k\neq\Gamma_w$. In this section, we exploit what no earlier configuration contained, namely a simple module of finite projective dimension, and we prove that this finiteness propagates through the whole structure of the dumbbell. The trace technology below descends from Hattori \cite{Hattori} and Stallings \cite{Stallings} through Lenzing \cite{Lenzing}, Igusa \cite{Igusa90}, Igusa--Liu--Paquette \cite{ILP} and Han \cite{HanExt}, who extended Hattori--Stallings traces to perfect complexes and bimodules; Lemma~\ref{lem:transfer} is a two-resolution transfer variant adapted to the situation where one of the resolutions is infinite.

\begin{proposition}[Finiteness at the hub]\label{prop:flanks}
$\pd S_u\leq N-1$ and $\operatorname{inj.dim}S_u\leq N-1$.
\end{proposition}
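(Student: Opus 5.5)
The plan is to read both bounds directly off the one-way law, Proposition~\ref{prop:anatomy}, using that in the mutual dumbbell the hub $u$ has nonzero Peirce components with both other vertices, in both directions. By Proposition~\ref{prop:anatomy}, with the threshold $N$ of the failure, we have $T_n(y)\subseteq Z(y)$ for every vertex $y$ and every $n\geq N$. Both claims therefore reduce to computing which dead zones contain $u$, or are contained in by $u$'s tops.

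\emph{Projective dimension.} First I compute $Z(u)=\{y: u\La y=0\}$. By Theorem~\ref{thm:trichotomy}(2), the quiver contains the arrows $x\to u$ and $w\to u$. Arrows survive in an admissible quotient, so $u\La x\neq0$ and $u\La w\neq0$; also $u\notin Z(u)$. Hence $Z(u)=\emptyset$, and so $T_n(u)=\emptyset$ for all $n\geq N$. This says $\Ext^N_\La(S_u,S_y)=0$ for every vertex $y$, i.e.\ $\operatorname{top}\Omega^N(S_u)=0$. By Nakayama, $\Omega^N(S_u)=0$, which gives $\pd S_u\leq N-1$.

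\emph{Injective dimension.} Here I must show $\Ext^n_\La(S_y,S_u)=0$ for all $y$ and all $n\geq N$, i.e.\ $u\notin T_n(y)$. Since $T_n(y)\subseteq Z(y)$, it suffices that $u\notin Z(y)$ for each $y$, i.e.\ $y\La u\neq0$.
\begin{itemize}
\item For $y=u$ this holds trivially, since $e_u\in u\La u$.
\item For $y=x$ and $y=w$ it holds by the arrows $u\to x$ and $u\to w$ recorded in Theorem~\ref{thm:trichotomy}(2); these give $x\La u\neq0$ and $w\La u\neq0$.
\end{itemize}
Hence $\Ext^n(S_y,S_u)=0$ for all simples $S_y$ and all $n\geq N$. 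Every finitely generated module has a finite composition series, so dimension shifting along it gives $\Ext^n(M,S_u)=0$ for all such $M$ and $n\geq N$. Therefore $\operatorname{inj.dim}S_u\leq N-1$. (This is also consistent with the explicit values $T_n(x)=\{w\}$ and $T_n(w)=\{x\}$ from Theorem~\ref{thm:concentration}.)

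\emph{Main obstacle.} There is essentially no obstacle. The only point needing care is that the threshold $N$ in Theorem~\ref{thm:trichotomy}(2) is the same uniform $N$ as in Proposition~\ref{prop:anatomy}. The dumbbell statement writes ``$n\gg0$'', but the argument above uses only the uniform inclusion $T_n(y)\subseteq Z(y)$ from Proposition~\ref{prop:anatomy}, so the bound $N-1$ is exact in terms of that threshold.
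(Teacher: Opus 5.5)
Your proof is correct and is essentially the paper's argument: both bounds are read off the one-way law of Proposition~\ref{prop:anatomy}, using that the Peirce components $u\La y$ and $y\La u$ are nonzero for every vertex $y$. The only difference is cosmetic and lies on the injective side. There the paper cites the concentrated tops $T_n(x)=\{w\}$, $T_n(w)=\{x\}$ and uses the just-proved bound on $\pd S_u$ to handle $\Ext^n(S_u,S_u)$, whereas you check $u\notin Z(y)$ directly for each $y$ and make the reduction from simples to all modules explicit.
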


\begin{proof}
For $n\geq N$ the one-way law gives $\Ext^n(S_u,S_v)\neq0\Rightarrow u\La v=0$; but $u\La x,u\La w\neq0$ (arrows) and $e_u\in u\La u$, so $\Ext^N(S_u,S_v)=0$ for every $v$, the $N$-th syzygy has no top, and $\Omega^N(S_u)=0$. Dually $\Ext^n(S_x,S_u)=\Ext^n(S_w,S_u)=0$ for $n\geq N$ since the tops of those syzygies sit at $w$, resp.\ $x$; and $\Ext^n(S_u,S_u)=0$ for $n>\pd S_u$.
\end{proof}

\begin{proposition}[Eventual vanishing of Hochschild homology]\label{prop:hhvanish}
Let $\La$ be \emph{any} failure of infinite $+$ (any number of vertices). Then $\mathrm{HH}_n(\La)=0$ for all $n\gg0$, and $\sum_i(-1)^i\dim\mathrm{HH}_i(\La)=v$, in every characteristic. Thus every Gap-A failure is a Han counterexample in the strongest sense, with the Euler characteristic pinned at $v$.
\end{proposition}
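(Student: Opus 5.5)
This is essentially a repackaging of Proposition~\ref{prop:chirigid}, and I would prove it by retracing that argument rather than invoking anything new.

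First, failure of infinite $+$ means, by Theorem~\ref{thm:criterion}, that $b_n=\dim B_n(\La)=0$ for all $n$ beyond some $N_0$. Since $\HH^\tau_n(\La)$ is the space of cycles in $B_n(\La)$, we get $\HH^\tau_n=0$ there. The surjection $\HH^\tau_n\twoheadrightarrow\HH_n$ of \cite[Lem.~2.6]{CLMS} then gives $\HH_n(\La)=0$ for $n>N_0$. Equivalently, and more directly, $\HH_n$ is a subquotient of $B_n$, which vanishes. No characteristic assumption enters at this point, because the minimal bimodule resolution and \eqref{eq:Bn} are valid over any field once $E$ is separable, which is automatic for elementary $\La$.

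Second, for the Euler characteristic I would use the finiteness of the complex. The chain complex $(B_\bullet(\La),\delta')$ is bounded, since $b_n=0$ for $n>N_0$, and each term is finite-dimensional. Hence its Euler characteristic equals that of its homology:
\[
\sum_i(-1)^i\dim\HH_i(\La)=\sum_{n\le N_0}(-1)^nb_n .
\]
By Corollary~\ref{cor:bn}, $b_n=\tr S_n+\tr S_{n+1}$, so the alternating sum telescopes to $\tr S_0\pm\tr S_{N_0+1}=v\pm\tr S_{N_0+1}$. The remaining task is to see that $\tr S_{N_0+1}=0$. This holds because $b_{N_0+1}=\tr S_{N_0+1}+\tr S_{N_0+2}=0$ and both summands are nonnegative. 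Alternatively, one can cite the two-consecutive-$n$ argument of Proposition~\ref{prop:chirigid} via Proposition~\ref{prop:window}.

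No step is a real obstacle. The only point needing care is that the Euler-characteristic identity uses the \emph{minimal-resolution} complex rather than the bar complex. This is legitimate because both compute $\HH_*(\La)$ \cite[Thm.~2.12]{CLMS}, and only the former is bounded. Finally, since $\gldim\La=\infty$ for a Gap-A failure, the first conclusion makes $\La$ a counterexample to Conjecture~\ref{conj:han}, which is the closing sentence of the statement.
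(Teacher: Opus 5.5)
Your argument is correct, but despite your opening sentence it is not the route of Proposition~\ref{prop:chirigid}, which is also the route the paper takes here. The paper goes through the windowed identity of Proposition~\ref{prop:window} (that is, \cite[Thm.~4.5]{CLMS}). With $\HH^\tau_n=0$ and $\tr S_{n+1}=0$ for large $n$, the identity $0=\tr S_{n+1}+(-1)^n(v-\chi_{n-1})$ forces $\chi_{n-1}=v$ for all large $n$. Vanishing of $\HH_n$ is then read off from the agreement of consecutive partial sums.

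You instead observe that failure of infinite $+$ makes the minimal Hochschild complex $(B_\bullet(\La),\delta')$ itself bounded. Then $\HH_n=0$ is immediate, as a subquotient of $B_n=0$. The Euler characteristic follows from two facts: a bounded complex of finite dimensional spaces has the same Euler characteristic as its homology, and Lemma~\ref{lem:K0} gives the telescoping $\sum_{n\le m}(-1)^nb_n=\tr(E_mC)=v+(-1)^m\tr S_{m+1}$. This is exactly the argument of Corollary~\ref{cor:bn}, with boundedness of $B_\bullet$ replacing $\gldim\La<\infty$.

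Your route is more elementary and self-contained. It bypasses \cite[Thm.~4.5]{CLMS}, and you do not even need Theorem~\ref{thm:criterion}: $b_n=0$ for $n\gg0$ follows directly from the definition of infinite $+$ together with \eqref{eq:Bn}, since only finitely many pairs $(y,x)$ occur. It also gives the explicit threshold $\HH_n=0$ whenever $b_n=0$. The paper's route has the advantage of running through the same windowed identity that drives Proposition~\ref{prop:tail}. There $\HH^\tau_*$ is infinite and $B_\bullet$ is unbounded, so your boundedness argument is unavailable.
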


\begin{proof}
Not infinite $+$ means $\sum_n\dim \mathrm{HH}^\tau_n<\infty$ \cite[Thm.~5.5]{CLMS}, so $\mathrm{HH}^\tau_n=0$ for $n\geq N_1$. By Proposition~\ref{prop:window}, $\dim\mathrm{HH}^\tau_n=\operatorname{tr}S_{n+1}+(-1)^n(v-\chi_{n-1})$ with $\chi_m=\sum_{i\leq m}(-1)^i\dim\mathrm{HH}_i$, and $\operatorname{tr}S_{n+1}=0$ for $n\geq N$. Hence $\chi_{n-1}=v$ for all $n\geq\max(N,N_1)$, so consecutive partial sums agree and $\dim\mathrm{HH}_n=0$ there; the stabilized sum is $v$.
\end{proof}

\begin{theorem}[Gluing]\label{thm:gluing}
In the dumbbell:
\begin{enumerate}
\item $x\La u\cdot u\La w=0=w\La u\cdot u\La x$, and $\rad\Gamma_u=C_x+C_w$ with $C_xC_w=C_wC_x=0$.
\item $J_x:=\La e_x\La=\Gamma_x\oplus x\La u\oplus u\La x\oplus C_x$ and $J_w:=\La e_w\La$ satisfy $J_xJ_w=J_wJ_x=0$; the ideal $Z:=J_x\cap J_w=C_x\cap C_w$ is central, $Z\cdot\rad\La=\rad\La\cdot Z=0$, and $Z\cong S_u^{\dim Z}$ as a one-sided module.
\item $\La/Z\;\cong\;\La_x\times_k\La_w$, the fibre product over the augmentations at $u$ of the two \emph{two-vertex} algebras $\La_x:=\La/J_w$ (quiver $u\rightleftarrows x$ plus loops at $x$) and $\La_w:=\La/J_x$. Moreover, no nonzero cyclic path class meets both $x$ and $w$, so $\mathrm{HH}_0(\La)=k^3\oplus H_x\oplus H_w$ with $H_x,H_w$ spanned by one-sided necklaces (overlapping at most in the classes of $Z$).
\end{enumerate}
\end{theorem}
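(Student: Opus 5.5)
The plan is to read everything off the Peirce decomposition of $\La$, using only four facts about the dumbbell from Theorem~\ref{thm:trichotomy}(2). First, the only arrows between distinct vertices are $u\to x$, $x\to u$, $u\to w$, $w\to u$. Second, $x\La w=w\La x=0$. Third, $u$ carries no loop. Fourth, loops may occur only at $x$ and $w$.

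\emph{Part (1).} The two vanishing statements are immediate: $x\La u\cdot u\La w\subseteq x\La w=0$ and $w\La u\cdot u\La x\subseteq w\La x=0$. For $\rad\Gamma_u$ we argue as follows.
\begin{enumerate}
\item Since $u$ has no loop, every path of positive length from $u$ to $u$ first leaves $u$ towards $x$ or towards $w$, and returns to $u$ for the first time through $x\La u$ or $w\La u$.
\item Cutting such a path at its successive returns to $u$ writes it as a product of ``excursions'', each lying in $C_x$ or in $C_w$. Here we use that an excursion cannot pass from the $x$-side to the $w$-side without passing through $u$.
\item A product of two consecutive excursions on opposite sides contains a factor $x\La u\cdot u\La w$ or $w\La u\cdot u\La x$, hence vanishes. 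This gives $C_xC_w=C_wC_x=0$.
\item Same-side products are absorbed: $C_xC_x\subseteq u\La x\,(x\La u\cdot u\La x)\,x\La u\subseteq u\La x\cdot\Gamma_x\cdot x\La u\subseteq C_x$, since $u\La x\cdot\Gamma_x\subseteq u\La x$. The same holds for $C_w$.
\item Induction on the number of excursions then gives $\rad\Gamma_u=C_x+C_w$.
\end{enumerate}

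\emph{Part (2).} We compute the Peirce components $e_a\La e_x\La e_b$ of $J_x=\La e_x\La$ for $a,b\in\{x,u,w\}$.
\begin{enumerate}
\item Every component with $a=w$ or $b=w$ contains a factor $e_w\La e_x=w\La x=0$ or $e_x\La e_w=x\La w=0$, so it vanishes.
\item The remaining components are $\Gamma_x$ (at $(x,x)$), $x\La u$, $u\La x$, and $u\La x\cdot x\La u=C_x$ (at $(u,u)$). This gives the stated decomposition of $J_x$, and $J_w$ is treated symmetrically.
\item $J_xJ_w=\La e_x\La e_w\La\subseteq\La\,(x\La w)\,\La=0$, and likewise $J_wJ_x=0$.
\item The only Peirce slot shared by $J_x$ and $J_w$ is $(u,u)$, so $Z=J_x\cap J_w=C_x\cap C_w$.
\item Because $u$ has no loop, every arrow touches $x$ or $w$, so $\rad\La\subseteq J_x+J_w$. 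Hence $Z\rad\La\subseteq J_xJ_w+J_wJ_x=0$, and similarly $\rad\La\cdot Z=0$.
\item We have $Z=e_uZe_u$, so $Z$ commutes with every vertex idempotent and is killed by $\rad\La$ on both sides. It is therefore central and isomorphic to a direct sum of copies of $S_u$ as a one-sided module.
\end{enumerate}

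\emph{Part (3).} We use the standard pullback square for two ideals,
\[
\La/(I\cap J)\;\cong\;\La/I\times_{\La/(I+J)}\La/J ,
\]
with $I=J_w$ and $J=J_x$.
\begin{enumerate}
\item The quotient $\La/J_w$ kills $e_w$ and everything passing through $w$. This leaves the algebra $\La_x$ on the quiver $u\rightleftarrows x$ together with the loops at $x$; the algebra $\La_w$ is symmetric.
\item The common quotient is $\La/(J_x+J_w)$. Its only surviving Peirce component is $\Gamma_u/(C_x+C_w)$, which equals $k$ by part (1). So the fibre product is taken over the augmentations at $u$, and we obtain $\La/Z\cong\La_x\times_k\La_w$.
\item For the necklace claim, suppose a closed path meets both $x$ and $w$. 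Rotating it to start at $x$, it contains a subpath from $x$ to $w$ (read in the paper's convention), and that subpath lies in $w\La x=0$, so the cyclic class vanishes.
\item Lemma~\ref{lem:HH0} and Remark~\ref{rem:necklace} then split $\HH_0$ into the three vertex classes together with $x$-sided and $w$-sided necklaces. Necklaces based at $u$ lie in $C_x$ or $C_w$, so the two families can overlap only in $C_x\cap C_w=Z$.
\end{enumerate}

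\emph{Main obstacle.} I expect the delicate step to be the excursion decomposition in part (1). One must check that the absorption $u\La x\cdot\Gamma_x\cdot x\La u\subseteq C_x$ uses only genuine products in $\La$, and that no mixed term survives modulo $I$. Everything else in the argument is Peirce bookkeeping.
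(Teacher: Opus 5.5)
Your proposal is correct and follows the paper's proof essentially step for step. It uses the same excursion decomposition of $\rad\Gamma_u$ (with $C_xC_x\subseteq C_x$ and vanishing mixed products), the same Peirce shape of $J_x$ with $Z$ sitting in the $(u,u)$-slot, and the same map $\La\to\La_x\times\La_w$ with kernel $Z$ over $\La/(J_x+J_w)=k\bar e_u$. The only differences are small streamlinings: you get $Z\,\rad\La=\rad\La\,Z=0$ in one stroke from $\rad\La\subseteq J_x+J_w$ and $J_xJ_w=J_wJ_x=0$ rather than slot by slot, and you identify the image with the fibre product via the general pullback $\La/(I\cap J)\cong\La/I\times_{\La/(I+J)}\La/J$ rather than the paper's dimension count $\dim J_x+\dim J_w=\dim\La-1+\dim Z$.
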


\begin{proof}
(1) $x\La u\cdot u\La w\subseteq x\La w=0$ and symmetrically. A cycle at $u$ visiting both $x$ and $w$ contains a segment from an $x$-visit to a $w$-visit, whose class lies in $w\La x=0$; a cycle visiting only $x$ decomposes at its $u$-returns into products of primitive excursions, each in $u\La x\cdot\Gamma_x\cdot x\La u\subseteq C_x$, and $C_x$ is multiplicatively closed since $x\La u\cdot u\La x\subseteq\Gamma_x$. Finally $C_xC_w\subseteq u\La x\,(x\La u\cdot u\La w)\,w\La u=0$.

(2) $\La e_x\La=(\Gamma_x\oplus u\La x)(\Gamma_x\oplus x\La u)$ gives the displayed Peirce shape; $J_xJ_w\subseteq\La(e_x\La e_w)\La=\La\,x\La w\,\La=0$. The intersection is the common $(u,u)$-part $C_x\cap C_w$. For $z\in Z$: $z\cdot u\La x\subseteq C_w\cdot u\La x\subseteq u\La w\cdot(w\La u\cdot u\La x)=0$, $z\cdot u\La w\subseteq C_x\cdot u\La w=0$, $z\cdot\rad\Gamma_u\subseteq C_xC_w+C_wC_x=0$, and symmetrically on the left; $z$ commutes with the idempotents since $z=e_uze_u$. So $z$ is central with $z\,\rad=\rad\,z=0$, and $\rad\La\cdot Z=0$ makes $Z$ semisimple at $u$.

(3) The map $\La\to\La/J_x\times\La/J_w$ has kernel $Z$ and image the pairs agreeing in $\La/(J_x{+}J_w)=k\bar e_u$; comparing dimensions ($\dim J_x+\dim J_w=\dim\La-1+\dim Z$) shows the image is exactly the fibre product. A cycle through both $x$ and $w$ vanishes as in part (1).
\end{proof}

\begin{corollary}[Hub splitting]\label{cor:hubsplit}
$R_x:=x\La u\oplus C_x$ and $R_w:=w\La u\oplus C_w$ are $\La$-submodules of $\rad P_u$ with $R_x+R_w=\rad P_u=\Omega^1(S_u)$ and $R_x\cap R_w=Z$, giving an exact sequence
$0\to S_u^{\dim Z}\to R_x\oplus R_w\to\Omega^1(S_u)\to0$.
Consequently $\pd R_x,\;\pd R_w\leq N-1$; moreover $J_wR_x=0$ and $J_xR_w=0$, so $R_x$ is a $\La_x$-module and $R_w$ a $\La_w$-module of finite projective dimension over $\La$.
\end{corollary}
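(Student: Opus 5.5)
The plan is to verify the Peirce description of $\rad P_u=\rad\La\,e_u$ first, then read off the exact sequence, then deduce the dimension bounds from Proposition~\ref{prop:flanks}, and finally check the annihilation statements.

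\textbf{Peirce description of $\rad P_u$.} Since $P_u=\La e_u$, we have
\[
\rad P_u=x\La u\oplus w\La u\oplus\rad\Gamma_u .
\]
Theorem~\ref{thm:gluing}(1) gives $\rad\Gamma_u=C_x+C_w$, so $R_x+R_w=\rad P_u$ as vector spaces. For the intersection, the $x$- and $w$-components are in different Peirce slots, so
\[
R_x\cap R_w=C_x\cap C_w=Z
\]
by Theorem~\ref{thm:gluing}(2).

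\textbf{$R_x$ is a submodule.} I would check that left multiplication by each Peirce block keeps $R_x$ inside itself.
\begin{itemize}
\item $\Gamma_x\cdot x\La u\subseteq x\La u$.
\item $u\La x\cdot x\La u\subseteq C_x$.
\item $w\La x\cdot(\ )=0$, because $w\La x=0$.
\item $\Gamma_u\cdot C_x\subseteq C_x$. Here $\rad\Gamma_u=C_x+C_w$ and $C_wC_x=0$; a $u\La x$ factor hitting $C_x$ stays in $u\La x\cdot\Gamma_x\cdot x\La u\subseteq C_x$.
\item $w\La u\cdot C_x\subseteq w\La u\cdot u\La x\cdot x\La u=0$, by Theorem~\ref{thm:gluing}(1).
\item $x\La u\cdot C_x\subseteq x\La u$.
\end{itemize}
By the $x\leftrightarrow w$ symmetry of the dumbbell, $R_w$ is a submodule in the same way.

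\textbf{The exact sequence.} The map $R_x\oplus R_w\to\rad P_u$, $(a,b)\mapsto a+b$, is surjective and has kernel $\{(z,-z):z\in Z\}\cong Z$. By Theorem~\ref{thm:gluing}(2), $Z\cong S_u^{\dim Z}$. Since $\rad P_u=\Omega^1(S_u)$, this is the displayed sequence.

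\textbf{Annihilation.} $J_w=\La e_w\La$ acts on $R_x$ through $e_w$. But $e_wR_x\subseteq w\La x\oplus(\text{the }w\text{-part of }x\La u\oplus C_x)=0$, so $e_wR_x=0$ and hence $J_wR_x=0$. Symmetrically $J_xR_w=0$. Thus $R_x$ is a $\La/J_w=\La_x$-module and $R_w$ is a $\La_w$-module.

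\textbf{Projective dimension bounds.} Proposition~\ref{prop:flanks} gives $\pd S_u\le N-1$, so $\pd\Omega^1(S_u)\le N-2$ (or $\Omega^1(S_u)$ is projective). We also have $\pd S_u^{\dim Z}\le N-1$. The long exact $\Ext$ sequence of the displayed short exact sequence then gives
\[
\pd(R_x\oplus R_w)\le\max\big(\pd\Omega^1(S_u),\ \pd S_u^{\dim Z}\big)\le N-1 .
\]
A direct summand has projective dimension at most that of the sum, so $\pd R_x,\pd R_w\le N-1$.

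\textbf{Main obstacle.} The delicate step is submodule closure under $\rad\Gamma_u$: one must use $C_wC_x=0$ and the vanishing of $w\La u\cdot u\La x$, and both come from Theorem~\ref{thm:gluing}(1).
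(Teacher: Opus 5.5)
Your proposal is correct and follows essentially the paper's route. Both arguments use Peirce bookkeeping with Theorem~\ref{thm:gluing}(1) for the submodule, sum and intersection claims, the Mayer--Vietoris sequence with kernel $Z\cong S_u^{\dim Z}$, and the bound $\pd(R_x\oplus R_w)\leq\max(\pd Z,\pd\Omega^1(S_u))\leq\pd S_u\leq N-1$ from Proposition~\ref{prop:flanks}. The only difference is cosmetic and lies in the annihilation step: the paper uses $R_x\subseteq J_x$ and $J_wJ_x=0$, while you use $e_wR_x=0$. Your version holds simply because $R_x\subseteq x\La u\oplus u\La u$, so the stray $w\La x$ in your inclusion is unnecessary.
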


\begin{proof}
Submodule and intersection claims are Peirce bookkeeping with Theorem~\ref{thm:gluing}(1); e.g.\ $w\La u\cdot C_x\subseteq(w\La u\cdot u\La x)x\La u=0$ and $x\La u\cdot C_x\subseteq\Gamma_x\cdot x\La u\subseteq x\La u$. The sequence is the Mayer--Vietoris sequence of the two submodules. Since $\pd Z=\pd S_u$ and $\pd\Omega^1(S_u)=\pd S_u-1$, the middle term has $\pd\leq\pd S_u\leq N-1$. Finally $J_wR_x\subseteq J_wJ_x=0$.
\end{proof}

\begin{proposition}[The tails are one-sided; the $u$-blind reduction]\label{prop:ublind}
For $n\geq N$, $\Omega^n(S_x)$ is a $\La_w$-module, $P_w=\La_w\bar e_w$, and the tail of the minimal $\La$-resolution of $S_x$ is the minimal $\La_w$-resolution of $M_x:=\Omega^N(S_x)$. The module $M_x$ satisfies $\pd_{\La_w}M_x=\infty$ and $\Ext^j_{\La_w}(M_x,S_u)=0$ for all $j\geq0$ (\emph{$u$-blind}). Symmetrically $M_w:=\Omega^N(S_w)$ is $u$-blind over $\La_x$. Hence each glued factor is a two-vertex algebra of infinite global dimension carrying a $u$-blind module of infinite projective dimension.
\end{proposition}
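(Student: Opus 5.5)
The plan is to work entirely with Peirce bookkeeping in the dumbbell, using Theorem~\ref{thm:concentration}, Theorem~\ref{thm:tail} and the gluing Theorem~\ref{thm:gluing}.

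\emph{Step 1: the tail modules are $\La_w$-modules.} For $n\geq N$ we have $T_n(x)=\{w\}$ (Theorem~\ref{thm:trichotomy}(2)), so the projective cover of $\Omega^n(S_x)$ is a sum of copies of $P_w$. Hence $\Omega^{n+1}(S_x)\subseteq\rad P_w^{\beta}$, and also $\Omega^n(S_x)$, being a quotient of $P_w^{\beta_n}$, is killed by $J_x$ as soon as $P_w$ is. By Theorem~\ref{thm:gluing}(2), $J_x=\Gamma_x\oplus x\La u\oplus u\La x\oplus C_x$. I would check $J_xP_w=0$ directly: $e_xP_w=x\La w=0$, and the $u$-part of $J_x\La e_w$ is $u\La x\cdot x\La w+C_x\cdot u\La w=0$, using $x\La u\cdot u\La w=0$ from Theorem~\ref{thm:gluing}(1). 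Therefore $P_w=\La e_w$ equals $\La_w\bar e_w$, and every module covered by sums of $P_w$ is a $\La_w$-module.

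\emph{Step 2: the tail is the minimal $\La_w$-resolution.} A $\La$-module annihilated by $J_x$ has the same radical, top and projective covers computed in $\La_w$ (since $\rad\La_w=\rad\La/J_x$ and $P_w$ is unchanged). By Step~1, the tail $\cdots\to P_w^{\beta_{n+1}}\to P_w^{\beta_n}\to M_x$ is a minimal $\La_w$-projective resolution of $M_x=\Omega^N(S_x)$. It never terminates because $\pd S_x=\infty$, so $\pd_{\La_w}M_x=\infty$.

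\emph{Step 3: $u$-blindness.} Since the minimal $\La_w$-resolution involves only $P_w$, we get $\Ext^j_{\La_w}(M_x,S_u)\cong\HomOp(\Omega^j_{\La_w}M_x,S_u)$, which is nonzero only if $S_u$ lies in the top of a term. All tops are at $w$ for $j\geq0$, so these groups vanish. For $j=0$, the top of $M_x$ itself is at $w$ since $T_N(x)=\{w\}$. The symmetric statement for $M_w$ over $\La_x$ follows by exchanging $x$ and $w$, using $T_n(w)=\{x\}$ and $w\La x=0$. Finally, $\gldim\La_w\geq\pd_{\La_w}M_x=\infty$. The two-vertex claim is Theorem~\ref{thm:gluing}(3).

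\emph{Main obstacle.} The delicate point is Step~1's annihilation $J_xP_w=0$, in particular ensuring that no element of $C_x$ acts nontrivially on $u\La w$. This rests on the crossing relation $x\La u\cdot u\La w\subseteq x\La w=0$. I would verify it carefully at the level of path classes, since $C_x$ is a product $u\La x\cdot x\La u$ and the required vanishing $C_x\cdot u\La w=0$ follows exactly from that relation.
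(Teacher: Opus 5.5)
Your proposal is correct and follows essentially the same route as the paper. In both, the tops of $\Omega^n(S_x)$ lie at $w$ and $J_x$ kills $P_w$, so the tail is a minimal $\La_w$-resolution with the same covers, and tops-at-$w$ gives $u$-blindness. The step you flag as the main obstacle is in fact immediate: $J_xP_w=J_xe_w=\La e_x\La e_w=\La\cdot x\La w=0$. This is exactly the paper's remark that $J_x$ has no $e_w$-column, so no separate check of $C_x\cdot u\La w$ is needed.
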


\begin{proof}
$\Omega^n(S_x)$ is generated at $w$ (Theorem~\ref{thm:trichotomy}), so $\Omega^n=\La\,e_w\Omega^n$ and $J_x\Omega^n=(J_xe_w)\La$-combinations $=0$ since $J_x$ has no $e_w$-column. $\La_w\bar e_w=\La e_w/J_xe_w=P_w$, so $\La$- and $\La_w$-covers of these modules coincide, and the tops-at-$w$ statement of the one-way law becomes $\Ext^j_{\La_w}(M_x,S_u)=0$.
\end{proof}

\begin{theorem}[Head extraction]\label{thm:head}
Applying the exact Peirce functors to the finite head $0\to\Omega^N(S_x)\to P_{N-1}\to\cdots\to P_0\to S_x\to0$ and to the finite resolution $Q_\bullet$ of $S_u$ yields finite exact complexes of corner modules:
\begin{enumerate}
\item $0\to e_xP_{N-1}\to\cdots\to e_xP_0\to k\to0$ over $\Gamma_x$, with terms in $\operatorname{add}(\Gamma_x\oplus x\La u)$;
\item the mirror at $w$, with terms in $\operatorname{add}(\Gamma_w\oplus w\La u)$;
\item $0\to e_uQ_d\to\cdots\to e_uQ_0\to k\to0$ over $\Gamma_u$, with terms in $\operatorname{add}(\Gamma_u\oplus u\La x\oplus u\La w)$.
\end{enumerate}
\end{theorem}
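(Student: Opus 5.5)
The plan is to apply, to each of the three finite exact sequences, the exact functor $F_z=e_z(-)\colon\La\text{-mod}\to\Gamma_z\text{-mod}$, for $z=x,w,u$ respectively. Exactness of $e_z(-)$ was already used in Lemma~\ref{lem:cut}(2) and Theorem~\ref{thm:cornerproj}. Each $F_z$ sends a finite exact complex of $\La$-modules to a finite exact complex of left $\Gamma_z$-modules. So the only work is to identify the images of the end terms and of the projective terms.

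\emph{Projective terms.} We have $e_zP_y=e_z\La e_y=z\La y$, which is a left $\Gamma_z$-module. In the dumbbell of Theorem~\ref{thm:trichotomy}(2) we know $x\La w=0=w\La x$.
\begin{itemize}
\item For $z=x$: $e_xP_x=\Gamma_x$, $e_xP_u=x\La u$ and $e_xP_w=0$, so every $e_xP_i$ lies in $\operatorname{add}(\Gamma_x\oplus x\La u)$.
\item For $z=w$: symmetrically, every term lies in $\operatorname{add}(\Gamma_w\oplus w\La u)$.
\item For $z=u$: $e_uP_u=\Gamma_u$, $e_uP_x=u\La x$ and $e_uP_w=u\La w$, which gives the class in (3).
\end{itemize}

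\emph{Augmentation terms.} These are immediate: $e_xS_x=k$, $e_wS_w=k$ and $e_uS_u=k$, each being the residue field of the respective local corner, since $\rad(e\La e)=e(\rad\La)e$.

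\emph{The left-hand terms.} For (3), Proposition~\ref{prop:flanks} gives a finite minimal resolution $Q_\bullet$ of length $d=\pd S_u\leq N-1$, so there is nothing further to check. For (1), the point is that the term $e_x\Omega^N(S_x)$ vanishes, so the complex genuinely starts at $e_xP_{N-1}$. By Theorem~\ref{thm:concentration}(1) we have $T_n(x)=\{w\}$ for $n\geq N$. Hence $\operatorname{top}\Omega^N(S_x)$ is concentrated at $w$, and by Nakayama
\[
\Omega^N(S_x)=\La e_w\,\Omega^N(S_x),\qquad\text{so}\qquad e_x\Omega^N(S_x)\subseteq x\La w\cdot\Omega^N(S_x)=0.
\]
The same argument at $w$ uses $T_n(w)=\{x\}$ for $n\gg0$ from Theorem~\ref{thm:trichotomy}(2), together with $w\La x=0$, to give $e_w\Omega^N(S_w)=0$. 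The mirror of Proposition~\ref{prop:ublind} records the same fact.

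\emph{Expected obstacle.} The one step needing care is the threshold bookkeeping. The statement uses a single $N$ both for the head of $S_x$ and for that of $S_w$, so I must ensure that $N$ is chosen large enough that $T_n(x)=\{w\}$ and $T_n(w)=\{x\}$ both hold from $n=N$ on. If necessary I will enlarge $N$, which is harmless: it only lengthens the heads and is consistent with Proposition~\ref{prop:flanks}. Once this is in place, exactness and the $\operatorname{add}$-descriptions are purely formal.
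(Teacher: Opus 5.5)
Your proposal is correct and follows the paper's own proof essentially verbatim. Apply the exact functors $e_z(-)$, read off $e_zP_y=z\La y$ using $x\La w=w\La x=0$, and kill the left end via $e_x\Omega^N(S_x)=x\La w\cdot\Omega^N(S_x)=0$, since the syzygy is generated at $w$. Your threshold worry is moot: the one-way law of Proposition~\ref{prop:anatomy} gives a single $N$ valid at every vertex, so Theorem~\ref{thm:concentration} applied at $w$ already gives $T_n(w)\subseteq Z(w)=\{x\}$ from the same $N$ onward, and no enlargement is needed.
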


\begin{proof}
$\Omega^N(S_x)$ is generated at $w$, so $e_x\Omega^N(S_x)=e_x\La e_w\cdot e_w\Omega^N=x\La w\cdot(-)=0$; exactness is preserved by $e_x(-)$, and $e_xP_x=\Gamma_x$, $e_xP_u=x\La u$, $e_xP_w=x\La w=0$. Same at $w$ and at $u$ (where $e_uS_u=k$ and the complex is the whole finite resolution).
\end{proof}

\begin{corollary}\label{cor:nonproj}
$x\La u$ is \emph{not} projective as a left $\Gamma_x$-module and $w\La u$ is \emph{not} projective as a left $\Gamma_w$-module; if $\Gamma_u\neq k$, at least one of $u\La x$, $u\La w$ is not projective over $\Gamma_u$. Equivalently, in the singularity categories: $[x\La u]$ classically generates $D_{\mathrm{sg}}(\Gamma_x)$ and $[w\La u]$ generates $D_{\mathrm{sg}}(\Gamma_w)$.
\end{corollary}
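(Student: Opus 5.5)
We will deduce non-projectivity from the finite exact complexes of Theorem~\ref{thm:head} by contradiction. Suppose $x\La u$ is projective as a left $\Gamma_x$-module. Then every term of the complex in Theorem~\ref{thm:head}(1) lies in $\operatorname{add}(\Gamma_x\oplus x\La u)$, so every term is a projective $\Gamma_x$-module. That complex is then a finite projective resolution of $k$ over $\Gamma_x$, which gives $\pd_{\Gamma_x}k<\infty$. The dimension argument in the proof of Theorem~\ref{thm:cornerproj} shows that $\pd_\Gamma k<\infty$ forces $\Gamma=k$ for a local algebra: the alternating sum of ranks gives $1=(\sum(-1)^ib_i)\dim\Gamma$. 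This contradicts $\Gamma_x\neq k$ from Theorem~\ref{thm:trichotomy}(2). The statement at $w$ is the mirror argument using Theorem~\ref{thm:head}(2) and $\Gamma_w\neq k$.

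At the hub, if $\Gamma_u\neq k$ and both $u\La x$ and $u\La w$ were $\Gamma_u$-projective, then Theorem~\ref{thm:head}(3) would be a finite projective $\Gamma_u$-resolution of $k$. The same dimension count gives $\Gamma_u=k$, a contradiction.

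For the singularity-category reformulation, we work in $D_{\mathrm{sg}}(\Gamma_x)=D^b(\Gamma_x\text{-mod})/\mathrm{perf}$. Every finitely generated $\Gamma_x$-module has a finite filtration with factors $k$, since $\Gamma_x$ is local. Hence $k$ classically generates $D^b$, and so $k$ generates $D_{\mathrm{sg}}$. The finite exact complex of Theorem~\ref{thm:head}(1) places $k$ in the thick subcategory of $D^b(\Gamma_x)$ generated by $\Gamma_x$ and $x\La u$: the bounded exact complex gives $k$ as an iterated cone of shifts of its terms, via brutal truncations. Passing to the quotient by perfect complexes, $\Gamma_x$ dies, so $k\in\operatorname{thick}([x\La u])$. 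Therefore $[x\La u]$ classically generates $D_{\mathrm{sg}}(\Gamma_x)$. Its nonvanishing there, which is projectivity failing, is consistent: $D_{\mathrm{sg}}(\Gamma_x)\neq0$ because $\pd k=\infty$.

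Conversely, generation implies non-projectivity, since a projective module is zero in $D_{\mathrm{sg}}$, which would force $D_{\mathrm{sg}}=0$, i.e.\ $\Gamma_x$ regular, i.e.\ $\Gamma_x=k$. This gives the stated equivalence.

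The only step requiring care is the thick-subcategory bookkeeping, namely that a bounded exact complex expresses its augmentation term as an iterated cone. That step is standard, so we expect no real obstacle.
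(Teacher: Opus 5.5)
Your proposal is correct and follows the paper's proof essentially step for step. You argue by contradiction using Theorem~\ref{thm:head}, which would give a finite projective resolution of $k$ over the local corner, and then use the thick-subcategory argument in $D_{\mathrm{sg}}$ with $[k]$ as generator. The differences are cosmetic: you rule out $\pd_{\Gamma}k<\infty$ with the dimension count from the proof of Theorem~\ref{thm:cornerproj} rather than the socle argument of Lemma~\ref{lem:throughcycles}, and you add the valid observation that generation together with $\Gamma_x\neq k$ recovers non-projectivity.
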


\begin{proof}
If $x\La u$ were projective, Theorem~\ref{thm:head}(1) would be a finite projective resolution of $k$ over the local algebra $\Gamma_x$, forcing $\Gamma_x=k$ by the socle argument of Lemma~\ref{lem:throughcycles}, against Theorem~\ref{thm:trichotomy}. Same at $w$ and at $u$. For the last statement: a finite exact complex with terms in $\operatorname{add}(\Gamma\oplus V)$ augmented to $k$ says $k\in\operatorname{thick}(\Gamma\oplus V)$ in $D^b(\Gamma)$, i.e.\ $[k]\in\operatorname{thick}[V]$ in $D_{\mathrm{sg}}(\Gamma)$ \cite{Buchweitz,Orlov}; since $k$ generates $D_{\mathrm{sg}}$ of an artinian local algebra, $[V]$ does too.
\end{proof}

\begin{proposition}[Cartan rigidity]\label{prop:cartanD}
Order the vertices $(x,u,w)$; the Cartan matrix of Definition~\ref{def:matrices}, $C_{ab}=\dim_k b\La a$, is then
$C=\left(\begin{smallmatrix}\gamma_x&q&0\\ p&\gamma_u&s\\ 0&r&\gamma_w\end{smallmatrix}\right)$ with $\gamma_\bullet=\dim\Gamma_\bullet$. Then $\det C\neq0$,
\[
\det C\cdot(\chi_x,\chi_u,\chi_w)=(-p\gamma_w,\;\gamma_x\gamma_w,\;-s\gamma_x),\qquad \chi_v:=\textstyle\sum_i(-1)^i\dim\Ext^i(S_u,S_v),
\]
and, using $\operatorname{inj.dim}S_u<\infty$ on the opposite side, $\det C$ divides $\gcd(\gamma_x\gamma_w,\,p\gamma_w,\,q\gamma_w,\,s\gamma_x,\,r\gamma_x)$. In particular $\chi_u=\gamma_x\gamma_w/\det C$ is a nonzero integer.
\end{proposition}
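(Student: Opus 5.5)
The plan is to derive a linear relation for the Cartan matrix from the finite projective dimension of the hub, using the $K_0$ identity of Lemma~\ref{lem:K0}, and then to solve it by hand; the $3\times3$ shape with two zero entries makes this explicit.

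First I will justify the displayed form of $C$. With the convention $C_{ab}=\dim b\La a$ and the vertex order $(x,u,w)$, the off-diagonal entries are $C_{xu}=\dim u\La x=q$, $C_{ux}=\dim x\La u=p$, $C_{uw}=\dim w\La u=s$ and $C_{wu}=\dim u\La w=r$. The corner entries $C_{xw}=\dim w\La x$ and $C_{wx}=\dim x\La w$ vanish by the dumbbell conditions of Theorem~\ref{thm:trichotomy}(2).

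Next I use $d:=\pd S_u\leq N-1$ from Proposition~\ref{prop:flanks}. This gives $(S_{d+1})_{u\bullet}=0$, so the $u$-row of the identity $E_dC=\Id+(-1)^dS_{d+1}$ reads $\chi^{(u)}C=e_u^{\mathsf T}$. Here $\chi^{(u)}$ is the row vector $(\chi_x,\chi_u,\chi_w)$ with $\chi_v=\sum_i(-1)^iC^{(i)}_{uv}=\sum_i(-1)^i\dim\Ext^i(S_u,S_v)$. Reading this column by column gives three equations:
\[
\gamma_x\chi_x+p\chi_u=0,\qquad q\chi_x+\gamma_u\chi_u+r\chi_w=1,\qquad s\chi_u+\gamma_w\chi_w=0 .
\]
Since $\gamma_x,\gamma_w\geq1$, I solve the outer equations for $\chi_x$ and $\chi_w$, substitute into the middle one, and multiply by $\gamma_x\gamma_w$. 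This yields $\chi_u\,(\gamma_x\gamma_u\gamma_w-pq\gamma_w-rs\gamma_x)=\gamma_x\gamma_w$.

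Cofactor expansion along the middle row shows that the bracket is exactly $\det C$. As the right-hand side $\gamma_x\gamma_w$ is nonzero, it follows that $\det C\neq0$ and $\chi_u=\gamma_x\gamma_w/\det C$. Substituting back gives $\det C\cdot\chi_x=-p\gamma_w$ and $\det C\cdot\chi_w=-s\gamma_x$. All $\chi_v$ are integers, so $\det C$ divides $\gamma_x\gamma_w$, $p\gamma_w$ and $s\gamma_x$, and $\chi_u$ is a nonzero integer.

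For the remaining two divisibilities I pass to $\La^{\mathrm{op}}$. There $\operatorname{inj.dim}S_u<\infty$ (Proposition~\ref{prop:flanks}) becomes finite projective dimension of the simple at $u$, and the Cartan matrix becomes $C^{\mathsf T}$, which has the same determinant. The roles of $p,q$ and of $r,s$ are then swapped, since $u\La^{\mathrm{op}}x=x\La u$. Repeating the computation gives $\det C\mid q\gamma_w$ and $\det C\mid r\gamma_x$.

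The step I expect to need the most care is this transposition bookkeeping. I must check that the opposite quiver still has the dumbbell zero pattern, that the hub numbers $p,q,r,s$ permute as claimed, and that the row-versus-column convention in $E_dC$ is read consistently. For the last point I will verify $S_0=\Id$ and $S_1=C-\Id$ directly in the $u$-row.
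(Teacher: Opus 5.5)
Your proposal is correct and follows the paper's proof. The paper likewise reads off $C^{\mathsf T}\chi=e_u$ from the finite resolution of $S_u$; your $u$-row of $E_dC=\Id+(-1)^dS_{d+1}$ is the same dimension count. Where the paper multiplies by the adjugate, you eliminate by hand, and you both obtain $\det C\mid q\gamma_w$ and $\det C\mid r\gamma_x$ by passing to $\La^{\mathrm{op}}$, whose Cartan matrix is $C^{\mathsf T}$ and where the simple at $u$ has finite projective dimension. The bookkeeping you flag is automatic: $C^{\mathsf T}$ keeps the two zero entries with $p\leftrightarrow q$ and $r\leftrightarrow s$, and nothing else about the dumbbell shape of $Q^{\mathrm{op}}$ is needed.
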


\begin{proof}
The finite resolution of $S_u$ gives $C^{\mathsf T}\chi=e_u$ in $\mathbb Z^3$ (apply each $e_a$ and count dimensions, using $\dim e_a P_z=C_{za}$). Multiplying by the adjugate, $\det C\cdot\chi=\operatorname{adj}(C^{\mathsf T})e_u$, whose entries are the displayed cofactors. If $\det C=0$ then $\gamma_x\gamma_w=0$, absurd. The opposite algebra has Cartan matrix $C^{T}$ and $DS_u$ has finite projective dimension over it, giving $\det C\mid q\gamma_w$ and $\det C\mid r\gamma_x$ from the transposed cofactors.
\end{proof}

\begin{lemma}[Transfer trace through the hub]\label{lem:transfer}
Let $\xi\in x\La u$, $\eta\in u\La x$, and let $Q_\bullet\to S_u$ be the minimal resolution, $d=\pd S_u$. Choose chain maps $h\colon P_\bullet(S_x)\to Q_\bullet$ and $g\colon Q_\bullet\to P_\bullet(S_x)$ lifting the right multiplications $\rho_\xi\colon\La e_x\to\La e_u$ and $\rho_\eta\colon\La e_u\to\La e_x$ (both cover zero maps of simples). Then in $\mathrm{HH}_0(\La)$
\[
[\xi\eta]\;=\;[\eta\xi]\;=\;\sum_{i=1}^{d}(-1)^{i+1}\operatorname{tr}_{\mathrm{HS}}(h_ig_i)\;=\;\sum_{i=1}^{d}(-1)^{i+1}\operatorname{tr}_{\mathrm{HS}}(g_ih_i),
\]
a \emph{finite} sum although $P_\bullet(S_x)$ is infinite. The same holds at $w$. In particular, if $x$ carries no loops, every class of $\rad\Gamma_x$ in $\mathrm{HH}_0$ is such a finite alternating trace through the hub's resolution.
\end{lemma}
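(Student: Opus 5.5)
We plan to use the Hattori--Stallings formalism for chain maps between projective complexes, together with the fact that $P_\bullet(S_x)$ and $Q_\bullet$ are resolutions of simples at different vertices. The argument has four steps.

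\emph{Step 1: the degree-zero terms.} In degree $0$ we have $h_0=\rho_\xi\colon P_x\to P_u$ and $g_0=\rho_\eta\colon P_u\to P_x$. Their Hattori--Stallings traces are
\[
\operatorname{tr}_{\mathrm{HS}}(h_0g_0)=[\eta\xi],\qquad \operatorname{tr}_{\mathrm{HS}}(g_0h_0)=[\xi\eta].
\]
These classes agree in $\HH_0=\La/[\La,\La]$, since $\operatorname{tr}(fg)=\operatorname{tr}(gf)$ for maps between finitely generated projectives. This already gives $[\xi\eta]=[\eta\xi]$. It also gives, in each degree $i$,
\[
\operatorname{tr}_{\mathrm{HS}}(h_ig_i)=\operatorname{tr}_{\mathrm{HS}}(g_ih_i),
\]
so the two sums in the statement are equal term by term. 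What remains is to show that the alternating sum with the $i=0$ term included vanishes, which moves $[\xi\eta]$ to the right-hand side with the stated signs.

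\emph{Step 2: reduce to the finite complex.} The composite $h\circ g\colon Q_\bullet\to Q_\bullet$ is a chain endomorphism of the finite complex of finitely generated projectives $Q_\bullet$. It lifts $\rho_\eta\rho_\xi$ restricted appropriately, and on $S_u$ it covers the composite of the two zero maps $S_u\to S_x\to S_u$, which is zero. The Lefschetz/Euler principle for Hattori--Stallings traces (Lenzing, Igusa) says that for a chain endomorphism of a finite projective resolution of $M$, the alternating trace $\sum_i(-1)^i\operatorname{tr}_{\mathrm{HS}}(\phi_i)$ depends only on the induced map on $M$, and it is additive in $M$. Hence
\[
\sum_{i=0}^{d}(-1)^i\operatorname{tr}_{\mathrm{HS}}\big((hg)_i\big)=0.
\]
Since $(hg)_i=h_ig_i$ as maps $Q_i\to Q_i$, we get
\[
[\eta\xi]=\sum_{i=1}^d(-1)^{i+1}\operatorname{tr}_{\mathrm{HS}}(h_ig_i).
\]
The sum is finite because $Q_i=0$ for $i>d$ (Proposition~\ref{prop:flanks}). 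The factors pass through the infinite complex, but $h_ig_i$ is an endomorphism of $Q_i$, so no convergence is needed. The cyclic swap $\operatorname{tr}(h_ig_i)=\operatorname{tr}(g_ih_i)$ then yields the last expression. Note that $g_ih_i$ is an endomorphism of $P_i(S_x)$, which is nonzero for all $i$; its trace nevertheless vanishes for $i>d$, since $h_i$ factors through $Q_i=0$.

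\emph{Step 3: independence of choices.} We must check that the identity does not depend on the choice of lifts $h,g$. Homotopic lifts change $hg$ by a null-homotopic map $\partial s+s\partial$. Its alternating trace vanishes by the standard cancellation $\operatorname{tr}(\partial_{i+1}s_i)=\operatorname{tr}(s_i\partial_{i+1})$, which telescopes on the bounded complex $Q_\bullet$. This independence is not strictly needed for the equality, but it confirms that the right-hand side is well defined.

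\emph{Step 4: the case $w$ and the loop-free statement.} The statement at $w$ is the same argument with $x$ and $w$ exchanged. For the final claim, suppose $x$ carries no loops. By the description of cycles in Theorem~\ref{thm:gluing}(1), every closed path at $x$ of positive length leaves $x$. It can only go to $u$, since $x\La w=w\La x=0$, and it returns through $u$. So it is a sum of products $\xi\eta'$ with $\xi\in x\La u$, $\eta'\in u\La x$ (absorbing $\Gamma_x$-factors into $\eta'$). The transfer formula then applies to each term by linearity in $\xi,\eta$.

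\emph{Main obstacle.} We expect the key point to be the rigorous vanishing of the Euler trace in Step 2. It requires the Hattori--Stallings Lefschetz lemma for endomorphisms of finite projective resolutions covering the zero map, in particular that an endomorphism lifting zero on $S_u$ is null-homotopic. This holds because $Q_\bullet$ is a projective resolution: any lift of $0$ is homotopic to $0$. So the alternating trace of $hg$ equals that of the zero map, namely $0$. We must also make sure that the degree-$0$ identification of $h_0g_0$ with right multiplication by $\eta\xi$ has the correct orientation under the paper's left-module convention, which decides between $[\xi\eta]$ and $[\eta\xi]$. Step 1 makes this distinction harmless.
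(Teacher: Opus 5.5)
Your proposal is correct and follows the paper's proof. The composite $hg$ is an endomorphism of the finite complex $Q_\bullet$ lifting $0$ on $S_u$, hence null-homotopic, so its alternating Hattori--Stallings trace telescopes to zero via $\operatorname{tr}_{\mathrm{HS}}(AB)=\operatorname{tr}_{\mathrm{HS}}(BA)$; the degree-zero term is $[\eta\xi]=[\xi\eta]$, degreewise trace symmetry gives the $g_ih_i$ form, and your Step~3 is not needed, while your path decomposition in the loopless case is exactly the identity $\rad\Gamma_x=x\La u\cdot u\La x$ that the paper invokes.
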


\begin{proof}
$hg$ is a chain self-map of $Q_\bullet$ with $(hg)_0=\rho_{\eta\xi}$, lifting the zero endomorphism of $S_u$, hence null-homotopic: $(hg)_i=d_{i+1}s_i+s_{i-1}d_i$. Applying $\operatorname{tr}_{\mathrm{HS}}$ and the symmetry $\operatorname{tr}_{\mathrm{HS}}(AB)=\operatorname{tr}_{\mathrm{HS}}(BA)$, the alternating sum telescopes: $\sum_{i=0}^d(-1)^i\operatorname{tr}_{\mathrm{HS}}((hg)_i)=0$. The $i=0$ term is $[\eta\xi]=[\xi\eta]$ (rotation), and $(hg)_i=0$ for $i>d$. The last equality is trace symmetry degreewise; the final claim holds because loopless $x$ has $\rad\Gamma_x=x\La u\cdot u\La x$ by Theorem~\ref{thm:gluing}(1).
\end{proof}

\begin{remark}[Concluding remarks on the dumbbell]\label{rem:dumbbellstatus}
We have not obtained a contradiction: the dumbbell survives this section, although in a considerably reduced form. It is (up to the central socle ideal $Z$) a vertex-gluing of two \emph{two-vertex} algebras, each of infinite global dimension, each carrying a $u$-blind infinite tail (Proposition~\ref{prop:ublind}); both linking modules $x\La u$, $w\La u$ are forced non-projective over nontrivial corners while, if a corner is uniserial, the argument of Corollary~\ref{cor:uniserial} applied to the opposite tail forces the \emph{other-sided} linking module ($u\La x$ over $\Gamma_x$, $u\La w$ over $\Gamma_w$) to be free, an explicitly asymmetric matrix condition; the Cartan determinant is nonzero and divides the five products of Proposition~\ref{prop:cartanD}; and $\mathrm{HH}_n(\La)=0$ for $n\gg0$ with $\chi=3$ (Proposition~\ref{prop:hhvanish}). The next step is now well defined: for a vertex-gluing, the reduced Hochschild complex over $E=k^3$ splits into the complexes of the two sides plus \emph{mixed necklaces} alternating through $u$, whose homology is built from $\operatorname{Tor}^{\La_x}_\bullet(S_u,S_u)$ and $\operatorname{Tor}^{\La_w}_\bullet(S_u,S_u)$; Proposition~\ref{prop:hhvanish} forces the mixed part to die in high degrees, while Proposition~\ref{prop:ublind} keeps both sides homologically infinite. Whether these two demands are compatible is a concrete computation, by hand for the necklace differential, or by exact-arithmetic computation extended to two-vertex gluings, and an incompatibility would eliminate the dumbbell, collapsing Gap A at $v=3$ to the single-infinite configuration. Two placements in the literature sharpen the picture. The dumbbell is a null-square-type extension of $\La_x\times\La_w$ in the sense of \cite{CRS}, but Corollary~\ref{cor:nonproj} forces its connecting bimodules to be non-projective over nontrivial corners: it sits exactly outside the null-square \emph{projective} class for which Han's conjecture is known to propagate, and outside the bounded-extension classes of \cite{CLMSsplit,CLMSbounded}. And the mixed-necklace computation belongs to the Poincar\'e-series tradition for fibre products of local rings going back to Dress and Kr\"amer \cite{DressKramer}; see also \cite{NassehSW} for Tor-vanishing over fibre products.
\end{remark}

\subsection*{Disclosure}
During the preparation of this work, the author used Claude, Anthropic's Fable 5 model, for deep research, formulation of theorems, and drafts of their proofs. The author reviewed and edited the output as needed and takes full responsibility for the content of the published article.

\end{document}